\RequirePackage{plautopatch}
\documentclass{amsart}

\usepackage{amsmath,amssymb,amsthm,mathtools}
\usepackage{enumitem}
\usepackage{iftex}
\ifptex
  \usepackage[dvipdfmx,hidelinks]{hyperref}
\else
  \usepackage[hidelinks]{hyperref}
\fi
\usepackage{cleveref}

\numberwithin{equation}{section}

\theoremstyle{plain}
\newtheorem{theorem}{Theorem}[section]
\newtheorem{proposition}[theorem]{Proposition}
\newtheorem{lemma}[theorem]{Lemma}
\newtheorem{corollary}[theorem]{Corollary}
\newtheorem{introtheorem}{Theorem}

\crefname{theorem}{theorem}{theorems}
\Crefname{theorem}{Theorem}{Theorems}
\crefname{proposition}{proposition}{propositions}
\Crefname{proposition}{Proposition}{Propositions}
\crefname{lemma}{lemma}{lemmas}
\Crefname{lemma}{Lemma}{Lemmas}
\crefname{corollary}{corollary}{corollaries}
\Crefname{corollary}{Corollary}{Corollaries}
\crefname{introtheorem}{theorem}{theorems}
\Crefname{introtheorem}{Theorem}{Theorems}

\theoremstyle{definition}
\newtheorem{definition}[theorem]{Definition}
\crefname{definition}{definition}{definitions}
\Crefname{definition}{Definition}{Definitions}

\theoremstyle{remark}

\crefname{remark}{remark}{remarks}
\Crefname{remark}{Remark}{Remarks}
\newtheorem*{ack}{Acknowledgment}

\crefname{section}{section}{sections}
\Crefname{section}{Section}{Sections}
\crefname{equation}{equation}{equations}
\Crefname{equation}{Equation}{Equations}

\setlist[enumerate]{leftmargin=2em,itemsep=0.25em,topsep=0.25em}
\newcommand{\R}{\mathbb{R}}
\newcommand{\C}{\mathbb{C}}
\newcommand{\N}{\mathbb{N}}
\newcommand{\Z}{\mathbb{Z}}
\newcommand{\Lip}{\operatorname{Lip}}
\newcommand{\Lipb}{\operatorname{Lip}_{\mathrm b}}
\newcommand{\lipa}{\operatorname{lip}_{\mathrm a}}
\newcommand{\Var}{\operatorname{Var}}
\newcommand{\Sep}{\operatorname{Sep}}
\newcommand{\supp}{\operatorname{supp}}
\newcommand{\tr}{\operatorname{tr}}

\newcommand{\ThetaPI}{\Theta_{\mathrm{PI}}}
\newcommand{\dd}{\,d}
\newcommand{\ind}{\mathbf{1}}
\newcommand{\norm}[1]{\lVert #1\rVert}
\newcommand{\abs}[1]{\lvert #1\rvert}
\newcommand{\inner}[2]{\langle #1,#2\rangle}

\title[Hopf quotients of Gaussian pyramid]{Hopf quotients of the infinite-dimensional Gaussian pyramid}
\author{Takashi Shioya}
\author{Shigeaki Yokota}
\address{Mathematical Institute, Tohoku University, Sendai 980-8578, Japan}
\email{shioya@math.tohoku.ac.jp}
\email{shigeaki.yokota.t4@dc.tohoku.ac.jp}
\date{}
\keywords{metric measure space, pyramid, Gaussian space, Hopf quotient, Poincar\'e inequality, separation distance}
\subjclass[2020]{Primary 53C23; Secondary 60E15, 28A33}
\thanks{This work was supported by JSPS KAKENHI Grant Number 24K06729.}

\begin{document}

\begin{abstract}
We study the infinite-dimensional Gaussian pyramid and its quotients by the global sign flip and the $U(1)$-Hopf action. We resolve affirmatively a long-standing problem posed by Tomohiro Fukaya around 2014: these three limiting geometries are pairwise non-similar, meaning that no positive rescaling makes any two of them coincide.
\end{abstract}

\maketitle

\section*{Introduction}

A basic question in high-dimensional metric measure geometry is which geometric distinctions survive as the dimension tends to infinity.
Concentration of measure provides a framework for studying such limits through Lipschitz observables, even when the limiting object cannot be represented by an ordinary metric measure space.
In this paper, we ask whether the distinction between spherical, real projective, and complex projective geometry persists at the level of their pyramidal limits, even after arbitrary positive rescaling.

Gromov \cite[\S $3\frac12$]{gromov2007met} developed a convergence theory based on concentration of measure for \emph{metric measure spaces}, or \emph{mm-spaces}: complete separable metric spaces equipped with Borel probability measures.
To accommodate more general limits, he introduced the notion of a \emph{pyramid}, a nonempty, box-closed family of isomorphism classes of mm-spaces that is downward closed under the Lipschitz order and upward directed with respect to that order.
The space of pyramids, equipped with the weak topology, forms the pyramidal compactification of the space of mm-spaces with the concentration topology.
We refer to \cite{ShioyaBook} and \Cref{sec:setup} for details.

The basic examples arise from high-dimensional spheres and real and complex projective spaces.
Shioya \cite{ShioyaBook,ShioyaCP} proved that the spheres $S^n(\sqrt n)\subset\R^{n+1}$ converge weakly to the Gaussian pyramid $\Gamma^\infty$, also called the infinite-dimensional virtual standard Gaussian space.
This pyramid is the box closure of the union of the pyramids associated with finite-dimensional standard Gaussian spaces.
He also proved that the real and complex projective spaces obtained as the quotients $S^n(\sqrt n)/\Z_2$ and $S^{2n+1}(\sqrt{2n+1})/U(1)$ converge weakly to $\Gamma^\infty/\Z_2$ and $\Gamma^\infty/U(1)$, respectively.
These results use the Euclidean metric restricted to the spheres and the induced quotient metrics.
Kazukawa \cite{kazukawa} subsequently established the corresponding results for the canonical Riemannian metrics.

Further geometric examples and analytic developments have extended this theory.
Shioya and Takatsu \cite{ShioyaTakatsu} obtained Gaussian spaces and their quotients as high-dimensional limits of Stiefel and flag manifolds, and Kazukawa and Shioya \cite{KazukawaShioyaEllipsoids} studied Gaussian limits of ellipsoids.
Esaki, Kazukawa and Mitsuishi \cite{EKM} developed a theory of pyramid invariants for distinguishing limits that are not represented by ordinary mm-spaces.
In a related analytic direction, Gigli and Vincini \cite{GigliVincini} established stability results for Sobolev energies and heat flow under concentration convergence, including heat-flow convergence under a uniform curvature-dimension bound.

More recently, Gigli, Suzuki and Vincini \cite{GigliSuzukiVincini} represented pyramids by extended metric measure spaces and studied the stability of functional inequalities under weak convergence of pyramids.
In particular, they identified the Gaussian pyramid with the box closure of the pyramid associated with an abstract Wiener space equipped with the Cameron--Martin extended distance.
This gives a concrete connection between pyramidal limits and infinite-dimensional Gaussian analysis.

For the basic spherical and projective models, the convergence theorems above describe the limits, but do not by themselves distinguish them.
The remaining question is whether the sign-flip and $U(1)$-Hopf actions leave different metric measure structures in the limit, or whether their effects can be absorbed into a change of scale.
Around 2014, Tomohiro Fukaya raised the following question, which has subsequently been discussed in the literature \cite{EKM,ShioyaProb}.

\medskip\noindent
\textbf{Problem.}
Are the three pyramids $\Gamma^\infty$, $\Gamma^\infty/\Z_2$ and $\Gamma^\infty/U(1)$ pairwise non-similar?

\medskip
We say that two pyramids $\mathcal P$ and $\mathcal Q$ are \emph{similar} if $a\mathcal P=\mathcal Q$ for some $a>0$, where $a\mathcal P$ is obtained by multiplying the metrics of all its members by $a$.
In this paper we resolve this long-standing problem affirmatively.

\begin{introtheorem} \label{thm:main}
The three pyramids $\Gamma^\infty$, $\Gamma^\infty/\Z_2$ and $\Gamma^\infty/U(1)$ are pairwise non-similar.
\end{introtheorem}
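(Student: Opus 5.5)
The plan is to find two invariants of pyramids that scale in a controlled, comparable way under $\mathcal P\mapsto a\mathcal P$, and to compute them for the three pyramids $\Gamma^\infty$, $\Gamma^\infty/\Z_2$ and $\Gamma^\infty/U(1)$. The keywords point to Poincar\'e constants and separation distances, so I would use those. For a pyramid $\mathcal P$, set $\ThetaPI(\mathcal P)$ to be the supremum over members $X\in\mathcal P$ of the optimal Poincar\'e constant. This is the best $C$ with $\Var_X(f)\le C\int\lipa(f)^2$ for $f\in\Lipb(X)$. The constant is monotone under the Lipschitz order and stable under box limits, so the supremum is well defined. It scales as $\ThetaPI(a\mathcal P)=a^2\ThetaPI(\mathcal P)$. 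In the same way, for each tuple $\kappa_0,\dots,\kappa_N>0$ I define $\Sep(\mathcal P;\kappa_0,\dots,\kappa_N)$ as the supremum over members of the separation distance. This scales linearly: $\Sep(a\mathcal P;\kappa)=a\Sep(\mathcal P;\kappa)$.

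The key point is that a scale-invariant ratio, such as
\[
\frac{\Sep(\mathcal P;\kappa,\kappa)^2}{\ThetaPI(\mathcal P)}
\]
for small $\kappa$, or its asymptotics as $\kappa\to0$, would differ among the three pyramids. So the main computations are the following.

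\begin{enumerate}
\item Compute $\ThetaPI$. For $\Gamma^\infty$ the Gaussian Poincar\'e inequality gives $\ThetaPI=1$, with equality from linear functions. For the quotients, the invariant functions have a spectral gap given by the first nonconstant invariant eigenfunction of the Ornstein--Uhlenbeck operator. These are even polynomials, such as $x_1^2-1$ or $x_1x_2$, with eigenvalue $2$, so $\ThetaPI=1/2$. I would verify this via finite-dimensional approximations $\gamma^n/\Z_2$ and $\gamma^{2n}/U(1)$ and the lower semicontinuity of the constants under weak convergence.
\item Compute the asymptotics of the separation distance as $\kappa\to0$.
\begin{itemize}
\item For $\Gamma^\infty$ this is governed by the Gaussian isoperimetric profile, giving $\Sep(\Gamma^\infty;\kappa,\kappa)\sim 2\sqrt{2\log(1/\kappa)}$.
\item For $\Gamma^\infty/\Z_2$, two sets of measure $\kappa$ in the quotient metric $\min(|x-y|,|x+y|)$ are essentially governed by $|x_1|$. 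The separation then looks like that of a one-sided half-normal variable, i.e. $\sqrt{2\log(1/\kappa)}$ plus lower-order terms, because one set can sit near the cone point.
\item For $\Gamma^\infty/U(1)$, the relevant observable is the modulus $|z_1|$, whose square is exponential. Its tail $e^{-r^2/2}$ gives a different constant, or perhaps the same leading order but a different second-order term.
\end{itemize}
Combining the leading $\sqrt{\log(1/\kappa)}$ coefficient with $\ThetaPI$ gives a scale-invariant number. If it matches for two of the pyramids, I would use a finer invariant, such as $\Sep$ with three or more sets, which detects how many "directions" the cone point allows.
\end{enumerate}

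The main obstacle is the upper bound for separation in the quotients. To show no member of the pyramid separates better, one must control every 1-Lipschitz image, not just the explicit ones. I would handle this by working in finite dimension $\gamma^n/G$, lifting sets back to $\R^n$, and applying Gaussian isoperimetry to the $G$-saturated lifts, whose measures equal the quotient measures. The distance between saturated sets equals the quotient distance, which gives the bound uniformly in $n$, and box-closure then passes it to the limit.
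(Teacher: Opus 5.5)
\textbf{Gap 1: the $(\kappa,\kappa)$ separation invariant does not distinguish the pyramids.} Your Poincar\'e computation ($C_{2,2}^2=1$ for $\Gamma^\infty$ and $1/2$ for both quotients) agrees with the paper. The separation step, however, fails with the symmetric parameters $(\kappa,\kappa)$.

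Your value $\Sep(\Gamma^\infty/\Z_2;\kappa,\kappa)\approx\sqrt{2\log(1/\kappa)}$ is too small. The map $[x]\mapsto\norm{x}_2-\sqrt n$ is $1$-Lipschitz on $\Gamma^n/\Z_2$, and its law tends to $N(0,1/2)$. Hence $\tfrac1{\sqrt2}\Gamma^1\in\Gamma^\infty/\Z_2$, and likewise $\tfrac1{\sqrt2}\Gamma^1\in\Gamma^\infty/U(1)$ via $\norm{z}_2-\sqrt{2n}$. It follows that $\Sep(\Gamma^\infty/G;\kappa,\kappa)\ge\sqrt2\,\Phi^{-1}(1-\kappa)\sim2\sqrt{\log(1/\kappa)}$, so for every $\kappa\le1/2$ your scale-invariant ratio satisfies
\[
\frac{\Sep(\Gamma^\infty/G;\kappa,\kappa)^2}{C_{2,2}(\Gamma^\infty/G)^2}\ge4\Phi^{-1}(1-\kappa)^2=\frac{\Sep(\Gamma^\infty;\kappa,\kappa)^2}{C_{2,2}(\Gamma^\infty)^2}.
\]
This is the reverse of the ordering you predict, and the single witness $\tfrac1{\sqrt2}\Gamma^1$ already reaches the Gaussian value. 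To distinguish the pyramids you would need separating pairs in the quotient that beat this witness. The cone-point (half-normal) pair you describe does worse. Your upper-bound device, Gaussian isoperimetry on saturated lifts, only reproduces the Gaussian bound, so it can never show that a quotient value is smaller.

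The paper's remedy is the asymmetric pair $(1/2,\kappa)$. The set of measure $1/2$ then sits at the cone point of $H_1$ or $H_2$, giving $\Sep(\Gamma^\infty/G;1/2,\kappa)\ge q_i(\kappa)-m_i\sim\sqrt{2\log(1/\kappa)}$. This matches the upper bound $\Phi^{-1}(1-\kappa)$ inherited from $\Gamma^\infty/G\subset\Gamma^\infty$. So all three pyramids have the same leading separation asymptotics, and the values $\ThetaPI=2$ versus $4$ come entirely from the Poincar\'e constants.

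\textbf{Gap 2: no working argument separates $\Gamma^\infty/\Z_2$ from $\Gamma^\infty/U(1)$.} The two quotients share $C_{2,2}=1/\sqrt2$ and $\ThetaPI=4$. The Rayleigh tail $e^{-r^2/2}$ of $H_2$ has the same logarithmic leading order as the half-normal tail, so no new constant appears. The multi-set separation invariant you mention is never computed, and there is no reason given to expect it to differ.

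The paper needs a structural argument here. First, the subgroup $\{\pm1\}\subset U(1)$ gives $\Gamma^n_{\C}/U(1)\prec\Gamma^{2n}/\Z_2$, hence $\Gamma^\infty/U(1)\subset\Gamma^\infty/\Z_2$. Second, equal Poincar\'e constants force any similarity to have scale $1$. Third, the inclusion is strict because $H_1\notin\Gamma^\infty/U(1)$.

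That last fact comes from a moment obstruction, which is the missing idea in your plan.
\begin{enumerate}
\item Suppose $r_j$ is a $U(1)$-invariant $1$-Lipschitz function on $\C^{N_j}$ whose law tends to $\chi_1$. Dimension-free Gaussian concentration gives $\mathbb Er_j^2\to1$ and $\Var(r_j^2)\to2$.
\item For $h_j=r_j^2-\mathbb Er_j^2$ this yields $\int\norm{\nabla h_j}_2^2\dd\gamma^{2N_j}\le4\mathbb Er_j^2\to4=2\lim\norm{h_j}_2^2$. Thus $h_j$ nearly saturates the even spectral gap.
\item The degree-four gap for the component orthogonal to quadratic forms then forces $h_j$ to be $L^2$-close to its second chaos component $q_j$. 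Hypercontractivity upgrades this to $L^3$-closeness.
\item The form $q_j$ is $U(1)$-invariant, hence of the form $\sum_k\lambda_k(\abs{z_k}^2-2)$. Its third moment is therefore at most $2(\mathbb Eq_j^2)^{3/2}\to4\sqrt2$.
\item This contradicts the half-normal value $\mathbb E(\xi^2-1)^3=8$ for $\xi\sim N(0,1)$.
\end{enumerate}
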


In general, determining whether two pyramids are similar is difficult. Esaki, Kazukawa and Mitsuishi \cite{EKM} proved that the infinite product $[0,1]^\infty$ of the interval $[0,1]$ is not similar to $\Gamma^\infty$ by using the $(2,2)$-Poincar\'e constant and introducing a new invariant of pyramids. They extended the $(2,2)$-Poincar\'e constant of an mm-space to that of a pyramid.
We denote by $C_{2,2}(\mathcal{P})$ the $(2,2)$-Poincar\'e constant of a pyramid $\mathcal{P}$.

The strategy of our proof is as follows.
We first consider the $(2,2)$-Poincar\'e constants.
Esaki, Kazukawa and Mitsuishi \cite{EKM} already proved
\[
C_{2,2}(\Gamma^\infty) = 1.
\]
We determine the constants of the two quotient pyramids as follows.
\begin{introtheorem} \label{thm:C22}
\[
 C_{2,2}(\Gamma^\infty/\Z_2)
 =C_{2,2}(\Gamma^\infty/U(1))=\frac1{\sqrt2}.
\]
\end{introtheorem}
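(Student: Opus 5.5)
The plan is to reduce the computation to finite-dimensional Gaussian quotients, where it becomes a spectral computation for the Ornstein--Uhlenbeck operator restricted to invariant functions. Throughout, I take $C_{2,2}(X)$ for an mm-space $X$ to be the optimal constant in $\norm{f-\int f}_{L^2}\le C\,\norm{\lipa f}_{L^2}$ over Lipschitz $f$, and $C_{2,2}(\mathcal P)$ to be the extension of \cite{EKM}. I will use two properties from \cite{EKM}. First, $C_{2,2}$ is monotone under the Lipschitz order: if $Y\prec X$ via a $1$-Lipschitz map $p$, then pulling back $g$ to $g\circ p$ gives $\lipa(g\circ p)\le(\lipa g)\circ p$, hence $C_{2,2}(Y)\le C_{2,2}(X)$. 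Second, for a pyramid that is the box closure of an increasing union of pyramids $\mathcal P_{X_n}$, one has $C_{2,2}(\mathcal P)=\lim_n C_{2,2}(X_n)=\sup_n C_{2,2}(X_n)$. This is the same mechanism \cite{EKM} use to get $C_{2,2}(\Gamma^\infty)=1$. Since $\Gamma^\infty/\Z_2$ and $\Gamma^\infty/U(1)$ are the box closures of the increasing unions of the pyramids of $\Gamma^n/\Z_2$ and $\Gamma^{2n}/U(1)=(\C^n,\gamma)/U(1)$, it suffices to show $C_{2,2}(\Gamma^n/\Z_2)=C_{2,2}(\Gamma^{2n}/U(1))=1/\sqrt2$ for every $n\ge1$ (respectively $n\ge2$ in the complex case, to be safe).

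Next I identify the finite-dimensional quotient constants with invariant spectral gaps. Let $G$ be $\Z_2=\{\pm1\}$ or $U(1)$, acting isometrically on $\R^N$ with $N=n$ or $N=2n$, and let $\pi\colon\R^N\to\R^N/G$ be the quotient map. Lipschitz functions on the quotient correspond exactly to $G$-invariant Lipschitz functions $f=g\circ\pi$ on $\R^N$. On the set where $G$ acts freely, which is the complement of the origin and so has full Gaussian measure, $\pi$ is a Riemannian submersion for the quotient metric. There I will check that $\lipa g(\pi x)=\abs{\nabla f(x)}$ for a.e.\ $x$, using Rademacher's theorem. For invariant $f$ the gradient is horizontal, and the local quotient distance is realized by horizontal segments. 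Hence
\[
C_{2,2}(\R^N/G)^{-2}=\inf\Bigl\{\frac{\int\abs{\nabla f}^2\dd\gamma}{\Var f}: f\ G\text{-invariant, Lipschitz, nonconstant}\Bigr\}.
\]
This infimum is the bottom of the spectrum of $-L=-\Delta+x\cdot\nabla$ on $G$-invariant functions orthogonal to constants. Since $G$ acts orthogonally, it commutes with $L$ and preserves each Wiener chaos, the span of degree-$k$ Hermite polynomials with eigenvalue $k$. The first chaos consists of linear functions, and no nonzero one is invariant under either $x\mapsto -x$ or the $U(1)$-action. The second chaos contains invariant elements, for example $x_1^2-1$ for $\Z_2$ and $\abs{z_1}^2-1$ for $U(1)$. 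So the invariant gap equals $2$, and each constant is $1/\sqrt2$, independently of $n$.

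The main obstacle I expect is justifying the passage from the pyramid to the finite-dimensional quotients. This requires two things: that $\mathcal P\mapsto C_{2,2}(\mathcal P)$ commutes with taking the box closure of the increasing union, and that the models $\Gamma^n/\Z_2$ are cofinal in the defining family. If \cite{EKM} give only lower semicontinuity, I would obtain the lower bound $C_{2,2}\ge1/\sqrt2$ from the explicit test functions, whose values are stable in $n$. For the upper bound I would use monotonicity: every member of $\Gamma^\infty/\Z_2$ is a box limit of spaces dominated by some $\Gamma^n/\Z_2$. Then I would show that the Poincaré inequality with constant $1/\sqrt2$ passes to box limits of dominated spaces, by approximating test functions through the domination maps. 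A secondary technical point is the almost-everywhere identity $\lipa g\circ\pi=\abs{\nabla f}$. It holds because the singular set $\{0\}$ is null and the metric is locally a Riemannian submersion off it.
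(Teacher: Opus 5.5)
Your route matches the paper's. Invariance (in particular evenness, since $-1\in U(1)$) removes the first Wiener chaos, so the invariant Gaussian gap is $2$. A second-chaos test function shows sharpness. The $n$-independent value $1/\sqrt2$ then passes to the pyramid by the Esaki--Kazukawa--Mitsuishi closure lemma, exactly as in \Cref{prop:finite-C} and \Cref{lem:C-closure}. Using $x_1^2-1$ and $\abs{z_1}^2-2$ (with the paper's normalization $\mathbb E\abs{z_1}^2=2$) instead of $\norm{x}_2^2-n$ is an inessential variation.

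\textbf{A false step.} You claim the almost-everywhere identity $\lipa g\circ\pi=\norm{\nabla f}_2$ for every invariant Lipschitz $f=g\circ\pi$; this should be dropped. It already fails for $\lipa f$ versus $\abs{f'}$ on $\R$. Let $U\subset\R$ be open, dense and of finite Lebesgue measure, and set $f(t)=\int_0^t\ind_U$. Then $f$ is $1$-Lipschitz with $\lipa f\equiv1$, because every interval contains a subinterval of $U$. But $f'=\ind_U$ a.e., which vanishes on the positive-measure set $\R\setminus U$. The invariant functions $f(\abs{x_1})$ on $\R^n$ and $f(\abs{z_1})$ on $\C^N$ therefore violate your identity on a set of positive Gaussian measure. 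The local submersion structure cannot rescue it: $\lipa$ is upper semicontinuous and is not determined by the a.e.\ gradient.

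\textbf{What suffices instead.} For the upper bound you only need $\norm{\nabla f}_2\le\lipa f\le\lipa g\circ\pi$ a.e., which follows from Rademacher and the fact that $\pi$ is $1$-Lipschitz. This gives $C_{2,2}\le1/\sqrt2$ exactly as in the paper.

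For the lower bound, your polynomials are neither bounded nor globally Lipschitz, so they are not admissible in \eqref{eq:C22-def}. Truncate them to $\tau_R(x_1^2-1)$ and $\tau_R(\abs{z_1}^2-2)$. Since $\abs{x_1}$ and $\abs{z_1}$ descend to $1$-Lipschitz functions on the quotients, $\lipa$ of the truncations is a.e.\ at most $2\abs{x_1}\ind_{\{\abs{x_1^2-1}<R\}}$ and $2\abs{z_1}\ind_{\{\abs{\abs{z_1}^2-2}<R\}}$, respectively. Letting $R\to\infty$, variance over energy is asymptotically at least $2/4$ and $4/8$.

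\textbf{The fallback paragraph.} You have the tools reversed. The lower bound for the pyramid is immediate from $C_{2,2}(\mathcal P)=\sup_{X\in\mathcal P}C_{2,2}(X)$. The upper bound is exactly the lower semicontinuity behind \Cref{lem:C-closure}. Reproving it by pulling test functions through domination maps would require controlling $\int\lipa^2$ under box approximation, which your sketch does not address.
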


We next consider the separation distance.
For a pyramid $\mathcal P$ with $0<C_{2,2}(\mathcal P)<\infty$, define
\[
 \ThetaPI(\mathcal P)
 \coloneqq\limsup_{\kappa\downarrow0}
 \frac{\Sep(\mathcal P;1/2,\kappa)^2}
      {C_{2,2}(\mathcal P)^2\log(1/\kappa)},
\]
where $\Sep(\mathcal P;\kappa_0,\kappa_1)$ denotes the separation distance of $\mathcal{P}$ with parameters $0<\kappa_0,\kappa_1\le1$
(see \Cref{ssec:sep} for the definition).
This coefficient is invariant under rescaling:
\[
 \ThetaPI(a\mathcal P)=\ThetaPI(\mathcal P)\qquad(a>0).
\]

\begin{introtheorem}\label{thm:intro-scale-separation}
\[
 \ThetaPI(\Gamma^\infty)=2,
 \qquad
 \ThetaPI(\Gamma^\infty/\Z_2)=4,
 \qquad
 \ThetaPI(\Gamma^\infty/U(1))=4.
\]
\end{introtheorem}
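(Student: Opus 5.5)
Given $C_{2,2}(\Gamma^\infty)=1$ and \Cref{thm:C22}, the statement reduces to computing the asymptotics of $\Sep(\mathcal P;1/2,\kappa)^2/\log(1/\kappa)$ as $\kappa\downarrow0$. The targets are
\[
 \limsup_{\kappa\downarrow0}\frac{\Sep(\Gamma^\infty;1/2,\kappa)^2}{\log(1/\kappa)}=2,
 \qquad
 \limsup_{\kappa\downarrow0}\frac{\Sep(\Gamma^\infty/G;1/2,\kappa)^2}{\log(1/\kappa)}=2
 \quad(G=\Z_2,U(1)),
\]
since dividing by $C_{2,2}^2=1$, respectively $1/2$, gives $2$ and $4$. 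I would use the standard fact that the separation distance of a pyramid is the supremum (or the limit along an approximating increasing sequence) of the separation distances of its members. This reduces the problem to uniform estimates on the finite-dimensional Gaussian spaces $\Gamma^n$ and their quotients $\Gamma^n/\Z_2$ and $\Gamma^{2n}/U(1)$, with $U(1)$ acting on $\C^n=\R^{2n}$.

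For $\Gamma^\infty$, the lower bound comes from two half-spaces in one coordinate. Take $A=\{x_1\le 0\}$ of measure $1/2$ and $B=\{x_1\ge t\}$ with $\gamma_1(B)=\kappa$. The Gaussian tail gives $t\sim\sqrt{2\log(1/\kappa)}$, so $\Sep^2\gtrsim 2\log(1/\kappa)$. For the upper bound, the Gaussian isoperimetric inequality (Borell, Sudakov--Tsirelson) shows that the $r$-neighbourhood of a set of measure $\ge 1/2$ has measure $\ge\Phi(r)$, uniformly in $n$. Hence $\Sep(\Gamma^n;1/2,\kappa)\le\Phi^{-1}(1-\kappa)\sim\sqrt{2\log(1/\kappa)}$, and the first value $2$ follows.

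For the quotients, the upper bound is immediate. Sets in the quotient lift to invariant sets of the same measure, and the quotient distance satisfies $d([x],[y])=\min_g|x-gy|$. So if two sets in the quotient are at distance $\ge r$, their lifts are at distance $\ge r$ in $\Gamma^n$, and therefore $\Sep(\Gamma^n/G)\le\Sep(\Gamma^n)$. The lower bound needs an invariant replacement for half-spaces, and I would use radial-type sets. For $\Z_2$, take $A=\{|x_1|\le a\}$ with $\gamma_1(A)=1/2$ and $B=\{|x_1|\ge a+t\}$. Their $\Z_2$-distance is still $t$, and the tail gives $t^2\sim 2\log(1/\kappa)$.

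For $U(1)$, take $A=\{|z_1|\le a\}$ and $B=\{|z_1|\ge a+t\}$ in $\C^n$. The quotient distance between them is $\ge t$ because $|z_1|$ is $1$-Lipschitz and $U(1)$-invariant. Here $|z_1|^2$ is exponential with mean $2$ (standard normal coordinates), so $\kappa=e^{-(a+t)^2/2}$, giving $t^2\sim 2\log(1/\kappa)$ again. This yields the value $2$, and hence $\Theta=4$ for both quotients.

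The main obstacle is the interface with the pyramid definition of $\Sep$ from \Cref{ssec:sep}, and with the normalization of the quotient pyramids. I would need to check that $\Sep(\mathcal P;\kappa_0,\kappa_1)$ equals the limit of $\Sep$ over the approximating spaces; this uses monotonicity under the Lipschitz order and some continuity under box convergence. I would also need to check that $\Gamma^\infty/U(1)$ is generated by $\Gamma^{2n}/U(1)$ with exactly this scaling convention, so that it matches the convention under which \Cref{thm:C22} gives $1/\sqrt2$. If the conventions differ by a factor, both $C_{2,2}$ and $\Sep$ rescale together, and $\ThetaPI$ is unaffected.
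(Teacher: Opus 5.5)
Your proposal is correct and follows essentially the same route as the paper. Gaussian isoperimetry, together with one-coordinate half-spaces, gives $\Sep(\Gamma^\infty;1/2,\kappa)=\Phi^{-1}(1-\kappa)$. The quotient upper bound comes from $\Gamma^n/G\prec\Gamma^n$; the paper states this as the inclusion $\Gamma^\infty/G\subset\Gamma^\infty$, which makes monotonicity immediate from the definition. Your sets $\{|x_1|\le a\},\{|x_1|\ge a+t\}$ and $\{|z_1|\le a\},\{|z_1|\ge a+t\}$ are exactly the pullbacks of the paper's intervals $[0,m_i]$ and $[q_i(\kappa),\infty)$ in $H_1$ and $H_2$. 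The pyramid interface you flag is handled in the paper by citing the Ozawa--Shioya limit formula for $\Sep$ under weak convergence of pyramids. The lower bounds need only membership, since $\Sep(\mathcal P;\alpha,\beta)\ge\Sep(X;\alpha,\beta)$ for every $X\in\mathcal P$.
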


\Cref{thm:intro-scale-separation} implies that the Gaussian pyramid $\Gamma^\infty$ is non-similar to either quotient.
The two quotient pyramids have the same values of both $C_{2,2}$ and $\ThetaPI$, so distinguishing them requires a different argument.

\begin{introtheorem}\label{thm:intro-strict-containment}
\[
 \Gamma^\infty/U(1)\subsetneq\Gamma^\infty/\Z_2.
\]
\end{introtheorem}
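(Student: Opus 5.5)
We have to prove two things: the containment $\Gamma^\infty/U(1)\subseteq\Gamma^\infty/\Z_2$, and that it is strict.

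\emph{Containment.} Identify $\R^{2n}$ with $\C^n$ and let $\gamma^{2n}$ be the standard Gaussian measure. Since $\Z_2=\{\pm1\}\subset U(1)$ acts by the same scalars, the $U(1)$-orbit of any point is a union of $\Z_2$-orbits. Hence the quotient map
\[
\R^{2n}/\Z_2\to\C^n/U(1)
\]
is $1$-Lipschitz for the quotient metrics $d([x],[y])=\min_g|x-gy|$, and it pushes the measure forward correctly. So $(\C^n/U(1),\gamma^{2n})$ is dominated by $(\R^{2n}/\Z_2,\gamma^{2n})$. Because $\Gamma^\infty/U(1)$ is the box closure of the union of the pyramids generated by $\C^n/U(1)$, and pyramids are downward closed and box-closed, we obtain $\Gamma^\infty/U(1)\subseteq\Gamma^\infty/\Z_2$.

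\emph{Strictness.} I would exhibit an mm-space lying in $\Gamma^\infty/\Z_2$ but not in $\Gamma^\infty/U(1)$. The natural candidate is the one-dimensional quotient
\[
X=(\R/\Z_2,\gamma^1)\cong([0,\infty),\ |t|_*\gamma^1),
\]
the half-line carrying the half-Gaussian (chi) distribution. It belongs to $\Gamma^\infty/\Z_2$, because it is dominated by $\R^{k}/\Z_2$ via $x\mapsto|x_1|$, which is well defined and $1$-Lipschitz on the quotient.

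Suppose for contradiction that $X\in\Gamma^\infty/U(1)$. Then $X$ is a box-limit of spaces $Y_i$ with $Y_i\prec\C^{n_i}/U(1)$. So there are $1$-Lipschitz $U(1)$-invariant functions $f_i$ on $\C^{n_i}$ whose laws under $\gamma^{2n_i}$ approximate the law of the $1$-Lipschitz coordinate of $X$, since one-dimensional Lipschitz images pass to limits. The goal is therefore to show that the law $|N(0,1)|$ cannot arise, even approximately in the Lévy/weak sense, as the law of a $1$-Lipschitz $U(1)$-invariant function of a Gaussian vector.

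The obstruction I would try is a sharpened Poincaré or concentration inequality for $U(1)$-invariant Lipschitz functions. For such a function, $\nabla f$ is orthogonal to the orbit direction $ix$. Combining this with the Gaussian spectral picture (invariant functions avoid the odd/charged Hermite modes) should give
\[
\Var f\le\tfrac12\int|\nabla f|^2,
\]
which is consistent with the constant $1/\sqrt2$ in \Cref{thm:C22}. For $\Z_2$-invariant functions one gets the same constant. However, $|x_1|$ has variance $1-2/\pi$, which is well below $1/2$, so this inequality does not yet separate the two quotients.

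I therefore expect the real obstacle to be finding a finer invariant. My first attempt would be a lower-tail (small-ball) comparison near the orbit-degenerate point. For a $U(1)$-invariant $1$-Lipschitz $f$ with minimum attained near the origin, one has $|f(z)-f(0)|\le|z|$ restricted to the orbit-space distance. More usefully, near level sets the orbit-space geometry is of complex type. The fibers of $z\mapsto|z|$ in $\C^n/U(1)$ have real codimension at least $2$ in the relevant sense, which forces
\[
P(f\le\min f+\epsilon)=O(\epsilon^2),
\]
whereas the half-Gaussian has mass $\asymp\epsilon$ near $0$. To make this work, I would prove a Gaussian isoperimetric-type inequality for $U(1)$-invariant sets: the measure of an $\epsilon$-neighbourhood of a $U(1)$-invariant set of small measure grows like that of the complement of a ball in $\C^1$, i.e.\ a chi distribution with $2$ degrees of freedom. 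This inequality should be stable under box limits. Failing this, I would search for a Lipschitz-observable statistic, such as a suitably chosen Dudley-type functional of the $1$-Lipschitz pushforwards, that is $\le$ the value for $\chi_2$ on $\Gamma^\infty/U(1)$ but is attained by $\chi_1$.
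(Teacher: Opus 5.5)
Your inclusion argument and your witness are the paper's. The quotient map $\Gamma^n_{\C}/\Z_2\to\Gamma^n_{\C}/U(1)$ gives $\Gamma^\infty/U(1)\subset\Gamma^\infty/\Z_2$, and $H_1=([0,\infty),\abs{\cdot},\chi_1)$ lies in $\Gamma^\infty/\Z_2$. Your reduction also matches: membership $H_1\in\Gamma^\infty/U(1)$ would give $U(1)$-invariant $1$-Lipschitz $r_j$ on $\C^{N_j}$ with $(r_j)_*\gamma^{2N_j}\Rightarrow\chi_1$.

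\textbf{The gap.} The obstruction you propose is false. The bound $\gamma^{2n}(f\le\min f+\epsilon)=O(\epsilon^2)$ fails for $U(1)$-invariant $1$-Lipschitz functions, even when the minimum is attained at the origin. Take $f(z)\coloneqq\bigl|\abs{z_1}-\abs{z_2}\bigr|/\sqrt2$, the Euclidean distance from $z$ to the invariant real hypersurface $\{\abs{w_1}=\abs{w_2}\}$. It is invariant and $1$-Lipschitz, with $f(0)=\min f=0$. Since $\abs{z_1},\abs{z_2}$ are independent with density $re^{-r^2/2}$, one gets $\gamma^{2n}(f\le\epsilon)\sim\sqrt{\pi/2}\,\epsilon$. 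This is linear in $\epsilon$, with a constant larger than the half-normal's $\sqrt{2/\pi}$.

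The codimension-two picture holds only at the cone point. Generic level sets in $\C^n/U(1)$ are hypersurfaces, and the approximating $r_j$ need not be radial. An isoperimetric inequality for invariant sets also could not help: it bounds the growth of $\gamma(A_\epsilon)$ from below, so it cannot give the upper bound $O(\epsilon^2)$ on the initial mass of sublevel sets. Weak convergence does not control $\min r_j$ or the rate as $\epsilon\to0$ anyway. The ``Dudley-type functional'' fallback is unspecified. So strictness, the real content of the theorem, is unproved.

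\textbf{The missing idea.} Use near-equality in the Poincar\'e inequality you already wrote down, applied to $h_j\coloneqq r_j^2-\mathbb Er_j^2$ rather than to $r_j$.
\begin{enumerate}
\item Gaussian concentration gives dimension-free convergence of all moments of $r_j$. Hence $\Var(h_j)\to\Var(\xi^2)=2$, while $\int\norm{\nabla h_j}_2^2\dd\gamma^{2N_j}\le4\mathbb Er_j^2\to4$. So the even centered function $h_j$ asymptotically saturates the gap $2$.
\item Even centered functions orthogonal to $\mathcal H_2$ have gap $4$. This forces $\norm{h_j-P_2h_j}_2\to0$, and hypercontractivity on $\mathcal H_2$ upgrades this to $L^3$.
\item By $O(2N_j)$-equivariance of $P_2$, the form $q_j=P_2h_j$ is a $U(1)$-invariant centered quadratic form. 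In unitary coordinates it is $\sum_k\lambda_k(\abs{z_k}^2-2)$, a combination of independent centered $\chi^2_2$ variables. Such forms satisfy $\abs{\mathbb Eq^3}\le2(\mathbb Eq^2)^{3/2}$.
\item Passing to the limit contradicts $\mathbb E(\xi^2-1)^2=2$ and $\mathbb E(\xi^2-1)^3=8>4\sqrt2$.
\end{enumerate}
What excludes $\chi_1$ from the $U(1)$ quotient is this global skewness constraint on the square of the observable, not its small-ball behaviour at the minimum.
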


A pyramid is a family of isomorphism classes of mm-spaces. Thus the strict containment in \Cref{thm:intro-strict-containment} means that $\Gamma^\infty/\Z_2$ contains mm-spaces that do not belong to $\Gamma^\infty/U(1)$. The metric half-line endowed with the half-normal distribution is one such space.

Since the two quotient pyramids have the same positive Poincar\'e constant, any similarity between them would have scale factor one.
Their strict containment therefore rules out similarity.
This proves \Cref{thm:main}.

The main difficulty is to exclude the half-normal line not only from the finite-dimensional $U(1)$ quotients, but also from their pyramidal limit.
Assuming membership yields a sequence of invariant Lipschitz observables whose distributions converge to the half-normal law.
Dimension-free moment estimates and a Gaussian spectral-gap argument force their centered squares to be approximated by invariant quadratic forms.
The limiting third moment then contradicts the constraint imposed by $U(1)$-invariance on these quadratic forms.
This provides an obstruction to membership in the limiting pyramid that is not detected by either the sharp Poincar\'e constant or $\ThetaPI$.

The results thus distinguish three standard geometric limits and show that identical values of these two invariants can coexist with different families of metric measure spaces in the limit.
They also suggest studying other Gaussian quotients through the interaction between invariant observables, their low-degree components, and moment constraints.

\Cref{sec:setup} introduces the three Gaussian pyramids and the auxiliary invariants used below. Section~2 develops the closure, moment, and Wiener-chaos estimates used later. Section~3 computes the sharp Poincar\'e constants of the two quotient pyramids by transferring the finite-dimensional calculations to their pyramid limits. Section~4 establishes the separation-distance estimates that distinguish the Gaussian pyramid from its quotients. Section~5 proves the strict containment by the half-normal witness and completes the pairwise non-similarity argument.

\section{Preliminaries}\label{sec:setup}

\subsection{Gromov's theory of mm-spaces}
The notions of mm-isomorphism, Lipschitz order, box distance and pyramid were introduced by Gromov \cite[\S 3$\frac12$]{gromov2007met}.
We refer to \cite{ShioyaBook} for the details.

For an mm-space $X$, we denote by $d_X$ and $\mu_X$ the metric and the measure of $X$, respectively.

\begin{definition}[mm-Isomorphism]
Two mm-spaces $X$ and $Y$ are \emph{mm-isomorphic} if there exists an isometry $f\colon\operatorname{supp}\mu_X\to\operatorname{supp}\mu_Y$ such that $f_*\mu_X=\mu_Y$,
where $\supp\mu$ is the support of a Borel measure $\mu$
and $f_*\mu_X$ is the pushforward of $\mu_X$ by $f$.
\end{definition}

Note that an mm-space $X$ is mm-isomorphic to $(\supp\mu_X,d_X,\mu_X)$.
We assume that the underlying measure $\mu_X$ of an mm-space $X$ is fully supported unless otherwise stated.

\begin{definition}[Box distance]
Let $I\coloneqq[0,1)$ be equipped with the Lebesgue measure $\mathcal L^1$. A Borel map $\varphi\colon I\to X$ satisfying $\varphi_*\mathcal L^1=\mu_X$ is called a \emph{parameter} of $X$. For a parameter $\varphi$ of $X$, set
\[
 \varphi^*d_X(s,t)\coloneqq d_X(\varphi(s),\varphi(t))
 \qquad(s,t\in I).
\]
For mm-spaces $X$ and $Y$, their \emph{box distance} $\Box(X,Y)$ is the infimum of all $\varepsilon\ge0$ for which there exist parameters $\varphi\colon I\to X$, $\psi\colon I\to Y$, and a Borel subset $I_0\subset I$ such that
\begin{align}
 &\abs{\varphi^*d_X(s,t)-\psi^*d_Y(s,t)}\le\varepsilon
 \qquad(s,t\in I_0),\tag{i}\\
 &\mathcal L^1(I_0)\ge1-\varepsilon.\tag{ii}
 \end{align}
\end{definition}

\begin{definition}[Lipschitz order]
For mm-spaces $X$ and $Y$, we say that \emph{$Y$ Lipschitz dominates $X$} and write $X\prec Y$ if there exists a  $1$-Lipschitz map $f : Y \to X$ such that $f_*\mu_Y = \mu_X$.
Such a map $f$ is called a \emph{domination map} and
the relation $\prec$ is called the \emph{Lipschitz order relation}.
\end{definition}

We identify an mm-space with its mm-isomorphism class when no confusion can arise.

\begin{definition}[Pyramid]
A \emph{pyramid} is a nonempty family $\mathcal P$ of mm-isomorphism classes of mm-spaces satisfying the following conditions:
\begin{enumerate}[label=\textup{(\roman*)},leftmargin=2.5em]
\item If $X\in\mathcal P$ and $Y\prec X$, then $Y\in\mathcal P$.
\item For any $X,Y\in\mathcal P$, there exists $Z\in\mathcal P$ such that $X\prec Z$ and $Y\prec Z$.
\item The family $\mathcal P$ is closed with respect to the box distance.
\end{enumerate}
\end{definition}

Gromov's original definition requires only (i) and (ii); condition (iii) was added in \cite{ShioyaBook,ShioyaCP} to make the space of pyramids Hausdorff.

For an mm-space $X$ and a pyramid $\mathcal P$, set
\[
 \Box(X,\mathcal P)\coloneqq\inf_{Y\in\mathcal P}\Box(X,Y).
\]
\begin{definition}[Weak convergence of pyramids]
Let $\mathcal P_n$ and $\mathcal P$ be pyramids. We say that $\{\mathcal P_n\}$ \emph{converges weakly} to $\mathcal P$ if the following conditions hold:
\begin{enumerate}[label=\textup{(\roman*)},leftmargin=2.5em]
\item For every $X\in\mathcal P$, we have
\[
 \lim_{n\to\infty}\Box(X,\mathcal P_n)=0.
\]
\item For every mm-space $X\notin\mathcal P$, we have
\[
 \liminf_{n\to\infty}\Box(X,\mathcal P_n)>0.
\]
\end{enumerate}
\end{definition}

Shioya \cite{ShioyaBook,ShioyaCP} constructed a metric on the set of pyramids compatible with the weak convergence.
The topology induced by the metric is called the weak topology.

For an mm-space $X$, let $\mathcal P_X := \{Y \mid Y \prec X\}$.
We call $\mathcal{P}_X$ the \emph{pyramid associated with $X$}.
The map $X \mapsto \mathcal{P}_X$ embeds the space of mm-isomorphism classes of mm-spaces with the concentration topology into the space of pyramids with the weak topology.
The space of pyramids is compact, and the image of this embedding is dense; hence it forms a compactification of the space of mm-isomorphism classes.
We call the space of pyramids the \emph{pyramidal compactification}.

For an mm-space $X$ and $a>0$, let $aX$ denote the space obtained by multiplying its distance by $a$. For a pyramid $\mathcal P$, set $a\mathcal P\coloneqq\{aX\mid X\in\mathcal P\}$.

\subsection{Gaussian Hopf quotients}
Let $\gamma^n\coloneqq N(0,I_n)$ be the \emph{standard Gaussian measure} on $\R^n$. Thus $\gamma^n=(\gamma^1)^{\otimes n}$, and its coordinates are independent and have distribution $N(0,1)$.
Set
\[
 \Gamma^n\coloneqq(\R^n,\norm{\cdot}_2,\gamma^n),
 \qquad
 \Gamma^n_{\C}\coloneqq(\C^n,\norm{\cdot}_2,\gamma^{2n}),
\]
where $\|\cdot\|_2$ indicates the $\ell^2$-norm.
We identify $\C^n$ with $\R^{2n}$ and take all its real and imaginary coordinates to be independent $N(0,1)$ random variables. The group $\Z_2=\{\pm1\}$ acts on $\Gamma^n$ by
\[
  \Z_2\times\R^n\longrightarrow\R^n,
  \qquad
  (\varepsilon,x)\longmapsto\varepsilon x,
\]
and $U(1)$ acts on $\Gamma^n_{\C}$ by
\[
  U(1)\times\C^n\longrightarrow\C^n,
  \qquad
  (e^{i\theta},z)\longmapsto e^{i\theta}z.
\]
For these two quotients, define the \emph{quotient distance} and \emph{quotient measure} by
\[
 d_{X/G}([x],[y])\coloneqq\inf_{g\in G}d_X(x,gy),
 \qquad
 \mu_{X/G}\coloneqq(\pi_G)_*\mu_X,
\]
where $\pi_G\colon X\to X/G$ is the \emph{quotient map}
and $(\pi_G)_*\mu_X$ is the pushforward of $\mu_X$ by $\pi_G$.

Let $\operatorname{pr}_n^{\R}\colon\R^{n+1}\to\R^n$ and $\operatorname{pr}_n^{\C}\colon\C^{n+1}\to\C^n$ be the coordinate projections that omit the last coordinate. They are measure-preserving $1$-Lipschitz maps and commute with the actions of $\Z_2$ and $U(1)$, respectively. They therefore induce maps
\[
 \overline{\operatorname{pr}}{}_n^{\R}\colon\R^{n+1}/\Z_2\to\R^n/\Z_2,
 \qquad
 \overline{\operatorname{pr}}{}_n^{\C}\colon\C^{n+1}/U(1)\to\C^n/U(1)
\]
given by
\[
 \overline{\operatorname{pr}}{}_n^{\R}([x]) \coloneqq[\operatorname{pr}_n^{\R}(x)], \qquad
 \overline{\operatorname{pr}}{}_n^{\C}([z]) \coloneqq[\operatorname{pr}_n^{\C}(z)].
\]
Since $\operatorname{pr}_n^{\R}$ commutes with the $\Z_2$-action and $\operatorname{pr}_n^{\C}$ commutes with the $U(1)$-action, the orbits on the right-hand sides do not depend on the representatives of $[x]$ and $[z]$. Thus both induced maps are well defined. They are measure-preserving and $1$-Lipschitz, and hence
\[
 \Gamma^n\prec\Gamma^{n+1},\qquad
 \Gamma^n/\Z_2\prec\Gamma^{n+1}/\Z_2,\qquad
 \Gamma^n_{\C}/U(1)\prec\Gamma^{n+1}_{\C}/U(1).
\]
The associated pyramids of the finite-dimensional models therefore form increasing sequences. We denote the box closures of the unions of these sequences by
\begin{align*}
 \Gamma^\infty
 &\coloneqq\overline{\bigcup_{n\ge1}\mathcal P_{\Gamma^n}},\\
 \Gamma^\infty/\Z_2
 &\coloneqq\overline{\bigcup_{n\ge1}\mathcal P_{\Gamma^n/\Z_2}},\\
\Gamma^\infty/U(1)
 &\coloneqq\overline{\bigcup_{n\ge1}\mathcal P_{\Gamma^n_{\C}/U(1)}}.
\end{align*}
The associated-pyramid sequences converge weakly to the corresponding pyramids defined above.
$\Gamma^\infty$ is called the \emph{infinite-dimensional virtual standard Gaussian space}.  In this paper, we call it the \emph{Gaussian pyramid} for simplicity.

\subsection{Asymptotic Lipschitz constants and Poincar\'e constants}
For a metric space $X$, let $\Lip(X)$ be the set of real-valued Lipschitz functions on $X$, and let $\Lipb(X)$ be the subset of bounded ones. Set
\[
 \Lip_1(X)\coloneqq\{f\colon X\to\R\mid\abs{f(x)-f(y)}\le d_X(x,y)
 \text{ for all }x,y\in X\}.
\]
For $x\in X$ and $r>0$, write $B_r(x)\coloneqq\{y\in X\mid d_X(x,y)<r\}$. For $f\in\Lip(X)$ and $x\in X$, define the \emph{asymptotic Lipschitz constant} of $f$ by
\begin{equation*}
 \lipa(f)(x)\coloneqq\lim_{r\downarrow0}\Lip\bigl(f|_{B_r(x)}\bigr)
 =\inf_{r>0}\Lip\bigl(f|_{B_r(x)}\bigr).
\end{equation*}
Here
\[
 \Lip(f|_A)\coloneqq\sup_{u,v\in A,\ u\ne v}\frac{\abs{f(u)-f(v)}}{d_X(u,v)},
\]
with value zero when $A$ has at most one point. In particular, $\lipa(f)=0$ at every isolated point.

For a measure space $(X,\mu)$ and $1\le p<\infty$, write
\[
 \norm{f}_{L^p(\mu)}
 \coloneqq\left(\int_X|f|^p\dd\mu\right)^{1/p},
 \qquad
 \inner{f}{g}_{L^2(\mu)}
 \coloneqq\int_Xfg\dd\mu.
\]
When the measure is clear from the context, we abbreviate $\norm f_{L^p(\mu)}$ to $\norm f_p$. If $\mu$ is a probability measure and $f\in L^2(\mu)$, define the \emph{variance} by
\[
 \Var_\mu(f)\coloneqq\int_X\left(f-\int_Xf\dd\mu\right)^2\dd\mu,
\]
and, for an mm-space $X$, write $\Var_X(f)\coloneqq\Var_{\mu_X}(f)$. 

\begin{definition}[$(2,2)$-Poincar\'e constant]
Define the \emph{$(2,2)$-Poincar\'e constant} of an mm-space $X$ as
\begin{equation}
 C_{2,2}(X)\coloneqq\inf\left\{C\in[0,\infty)\mid
 \Var_X(f)\le C^2\int_X\lipa(f)^2\dd\mu_X\quad(f\in\Lipb(X))\right\}. \label{eq:C22-def}
\end{equation}
The infimum of the empty set is understood to be $+\infty$.
For a pyramid $\mathcal P$, we use the standard extension due to Esaki--Kazukawa--Mitsuishi \cite{EKM}:
\[
 C_{2,2}(\mathcal P)\coloneqq\sup_{X\in\mathcal P}C_{2,2}(X).
\]
\end{definition}
$C_{2,2}$ is $1$-homogeneous:
\[
 C_{2,2}(a\mathcal P)=aC_{2,2}(\mathcal P) \qquad (a > 0).
\]

\subsection{Separation distance}
\label{ssec:sep}
The separation distance is defined by Gromov \cite{gromov2007met}.
\begin{definition}[Separation distance]
For $0 < \alpha,\beta \le 1$, define the \emph{separation distance} of an mm-space $X$ by
\[
 \begin{aligned}
 \Sep(X;\alpha,\beta)
 \coloneqq\sup\{d_X(A,B)\mid {}&A,B\subset X\text{ are Borel sets},\\
 &\mu_X(A)\ge\alpha,\ \mu_X(B)\ge\beta\},
 \end{aligned}
\]
where
\[
 d_X(A,B)\coloneqq\inf\{d_X(x,y)\mid x\in A,\ y\in B\}.
\]
\end{definition}

The separation distance is extended to a pyramid by \cite[Definition~4.3]{OzawaShioya}.

\begin{definition}[Separation distance for a pyramid]
For $0 < \alpha,\beta \le 1$, the \emph{separation distance} of a pyramid $\mathcal{P}$ is defined by
\[
 \Sep(\mathcal P;\alpha,\beta)
 \coloneqq\lim_{\delta\downarrow0}\sup_{X\in\mathcal P}
       \Sep(X;\alpha-\delta,\beta-\delta).
\]
\end{definition}

For every $X\in\mathcal P$, the quantity $\Sep(X;\alpha-\delta,\beta-\delta)$ is nonincreasing as $\delta\downarrow0$. The supremum over $X$ is therefore also nonincreasing, so the limit on the right-hand side exists as an extended real number, possibly $+\infty$.

For every mm-space $X$, \cite[Proposition~4.4]{OzawaShioya} gives
\[
 \Sep(\mathcal P_X;\alpha,\beta) = \Sep(X;\alpha,\beta).
\]
The separation distance is $1$-homogeneous:
\[
 \Sep(a\mathcal P;\alpha,\beta)=a\Sep(\mathcal P;\alpha,\beta). 
\]

\section{Closure and Gaussian-analysis lemmas}

\subsection{Quotient maps and invariant observables}

\begin{lemma}\label{lem:quotient}
Let a compact group $G$ act on an mm-space $X$ by measure-preserving isometries. The quotient map $\pi_G\colon X\to X/G$ is measure-preserving and $1$-Lipschitz. Therefore, if $\bar f\in\Lip_1(X/G)$, then
\[
 f\coloneqq\bar f\circ\pi_G\in\Lip_1(X),
 \qquad
 f_*\mu_X=\bar f_*\mu_{X/G}.
\]
Conversely, if $f\in\Lip_1(X)$ is $G$-invariant, then there exists a unique $\bar f\colon X/G\to\R$ such that $f=\bar f\circ\pi_G$, and this $\bar f$ is also $1$-Lipschitz.

Suppose in addition that $G$ acts on mm-spaces $X$ and $Y$ by measure-preserving isometries, and let $\pi_X\colon X\to X/G$ and $\pi_Y\colon Y\to Y/G$ be the respective quotient maps. If a measure-preserving $1$-Lipschitz map $T\colon X\to Y$ is $G$-equivariant, that is, $T(gx)=gT(x)$, then
\[
 \bar T\colon X/G\longrightarrow Y/G,
 \qquad
 \bar T([x])\coloneqq[T(x)]
\]
is independent of the choice of representative and is a measure-preserving $1$-Lipschitz map.
\end{lemma}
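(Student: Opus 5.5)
The plan is to verify each claim directly from the definition of the quotient distance $d_{X/G}([x],[y])=\inf_{g\in G}d_X(x,gy)$. Before anything else I will record two facts, both resting on $G$ acting by isometries. First, the infimum does not depend on the representatives: replacing $x$ by $hx$ and $y$ by $ky$ gives $d_X(hx,gky)=d_X(x,h^{-1}gky)$, and as $g$ ranges over $G$ so does $h^{-1}gk$. Second, $d_{X/G}$ is a pseudometric; the triangle inequality comes from $d_X(x,gz)\le d_X(x,g'y)+d_X(g'y,gz)=d_X(x,g'y)+d_X(y,g'^{-1}gz)$ and taking infima. Compactness of $G$ together with continuity of the action should make the infimum attained. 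Then $d_{X/G}([x],[y])=0$ forces $y\in Gx$, so $d_{X/G}$ is a genuine metric. Separability of $X/G$ follows because $\pi_G$ is a continuous surjection from a separable space, and completeness is standard for quotients by compact isometry groups. Since these properties are part of the standing setup, I will only mention them briefly.

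For the first assertion, taking $g=e$ in the infimum gives $d_{X/G}(\pi_Gx,\pi_Gy)\le d_X(x,y)$, so $\pi_G$ is $1$-Lipschitz. It is measure-preserving by the very definition $\mu_{X/G}=(\pi_G)_*\mu_X$. Composing with $\bar f\in\Lip_1(X/G)$ then gives $f\in\Lip_1(X)$, and
\[
 f_*\mu_X=\bar f_*(\pi_G)_*\mu_X=\bar f_*\mu_{X/G}.
\]

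For the converse, $G$-invariance of $f$ means $f$ is constant on orbits. Since $\pi_G$ is surjective, $\bar f([x])\coloneqq f(x)$ is well defined and is the unique function with $f=\bar f\circ\pi_G$. For the Lipschitz bound, fix $x,y$ and any $g\in G$. Then
\[
 \abs{\bar f([x])-\bar f([y])}=\abs{f(x)-f(gy)}\le d_X(x,gy),
\]
and taking the infimum over $g$ shows $\bar f$ is $1$-Lipschitz. Borel measurability of $\bar f$ is automatic from continuity.

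For the equivariant part, $\bar T$ is well defined because $T(gx)=gT(x)$ lies in the orbit of $T(x)$. It is $1$-Lipschitz because
\[
 d_{Y/G}([Tx],[Ty])\le d_Y(Tx,gTy)=d_Y(Tx,T(gy))\le d_X(x,gy)
\]
for every $g$, and we then take the infimum over $g$. It is measure-preserving because $\bar T\circ\pi_X=\pi_Y\circ T$, which gives
\[
 \bar T_*\mu_{X/G}=\bar T_*(\pi_X)_*\mu_X=(\pi_Y)_*T_*\mu_X=(\pi_Y)_*\mu_Y=\mu_{Y/G}.
\]
The only point needing any care is the preliminary check that $d_{X/G}$ is a well-defined metric. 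That check is exactly where the isometric action and compactness of $G$ are used; the rest is formal.
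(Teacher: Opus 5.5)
Your proposal is correct and follows the paper's proof essentially line by line. The shared steps are: $1$-Lipschitzness of $\pi_G$ by taking $g=e$; the converse via $\abs{f(x)-f(gy)}\le d_X(x,gy)$ followed by an infimum over $g$; and the equivariant part via $d_Y(Tx,gTy)=d_Y(Tx,T(gy))$ together with $\bar T\circ\pi_X=\pi_Y\circ T$. Your preliminary check that $d_{X/G}$ is a representative-independent, nondegenerate metric is sound, and it is extra care the paper leaves implicit.
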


\begin{proof}
The definition of the quotient measure shows that $\pi_G$ is measure-preserving, and
\[
 d_{X/G}(\pi_G(x),\pi_G(y))
 =\inf_{g\in G}d_X(x,gy)\le d_X(x,y),
\]
so it is $1$-Lipschitz. Thus $\bar f\circ\pi_G$ is also $1$-Lipschitz, and
\[
 (\bar f\circ\pi_G)_*\mu_X
 =(\bar f)_*(\pi_G)_*\mu_X
 =(\bar f)_*\mu_{X/G}.
\]
Conversely, for a $G$-invariant function $f$, set $\bar f([x])\coloneqq f(x)$. This is independent of the representative. For every $g\in G$,
\[
 |\bar f([x])-\bar f([y])|
 =|f(x)-f(gy)|\le d_X(x,gy).
\]
Taking the infimum over $g$ shows that $\bar f$ is $1$-Lipschitz.

If $T$ is $G$-equivariant, then $[x]=[y]$ implies $[T(x)]=[T(y)]$, so $\bar T$ is independent of the representative. Moreover,
\begin{align*}
 d_{Y/G}(\bar T([x]),\bar T([y]))
 &=\inf_{g\in G}d_Y(T(x),gT(y))\\
 &=\inf_{g\in G}d_Y(T(x),T(gy))
 \le d_{X/G}([x],[y]).
\end{align*}
Thus $\bar T$ is $1$-Lipschitz. Finally, $\bar T\circ\pi_X=\pi_Y\circ T$ gives
\[
 (\bar T)_*\mu_{X/G}
 =(\bar T)_*(\pi_X)_*\mu_X
 =(\pi_Y)_*T_*\mu_X
 =(\pi_Y)_*\mu_Y
 =\mu_{Y/G}.
\]
\end{proof}

\subsection{Dimension-free moment convergence}

We establish a dimension-free criterion that upgrades weak convergence of Lipschitz observables to convergence of moments of every positive order.
The Gaussian isoperimetric inequality provides the required tail estimates.

Write $\Phi$ for the \emph{distribution function} of the \emph{standard normal distribution} and
\[
 \varphi(t)\coloneqq(2\pi)^{-1/2}e^{-t^2/2}
\]
for its \emph{density}. For a set $A\subset\R^n$ and $x\in\R^n$, write
\[
 d_{\R^n}(x,A)\coloneqq\inf_{a\in A}\norm{x-a}_2,
\]
and, for $t>0$, set
$A_t\coloneqq\{x\in\R^n\mid d_{\R^n}(x,A)<t\}$.

In what follows, $\N\coloneqq\{1,2,\dots\}$ and $\mathbb N_0\coloneqq\N\cup\{0\}=\{0,1,2,\dots\}$.
We write $\Rightarrow$ for weak convergence of probability measures.

\begin{theorem}[Gaussian isoperimetric inequality {\cite{Borell,SudakovTsirelson}}]\label{thm:gauss-iso}
For every measurable set $A\subset\R^n$ and every $t>0$,
\begin{equation*}
 \gamma^n(A_t)\ge
 \Phi\bigl(\Phi^{-1}(\gamma^n(A))+t\bigr)
\end{equation*}
holds.
Here we use the conventions $\Phi^{-1}(0)=-\infty$ and $\Phi^{-1}(1)=+\infty$.
\end{theorem}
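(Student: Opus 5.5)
The plan is to prove the inequality first for half-spaces, where it holds with equality, and then transfer it to arbitrary sets. I would use Ehrhard symmetrization, or equivalently the Poincaré limit of spherical isoperimetry.

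\emph{Reductions.} If $\gamma^n(A)\in\{0,1\}$ the claim is trivial under the stated conventions: either the right-hand side is $\Phi(-\infty)=0$, or $A_t$ has full measure. Replacing $A$ by its closure changes $A_t$ for no $t>0$ and does not decrease $\gamma^n(A)$, so we may assume $A$ is closed. Since $\gamma^n(A_t)$ depends monotonically on $A$, inner regularity lets us further assume $A$ is compact; we then let the compact sets increase and pass to the limit.

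\emph{Half-spaces.} For $H=\{x_1\le a\}$ we have $\gamma^n(H)=\Phi(a)$ and $H_t=\{x_1<a+t\}$. Hence $\gamma^n(H_t)=\Phi(\Phi^{-1}(\gamma^n(H))+t)$, which is equality.

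\emph{Main step.} Here I would follow the Sudakov--Tsirelson/Borell route via the Poincaré limit.
\begin{enumerate}
\item Let $\sigma_N$ be the uniform measure on $S^{N-1}(\sqrt N)\subset\R^N$, and let $p_N\colon S^{N-1}(\sqrt N)\to\R^n$ be the projection onto the first $n$ coordinates. Then $(p_N)_*\sigma_N\to\gamma^n$ in total variation, with densities converging locally uniformly.
\item Lévy's spherical isoperimetric inequality says that, among sets of a given measure, geodesic caps minimize the measure of $\varepsilon$-neighbourhoods.
\item Since $p_N$ is $1$-Lipschitz, $p_N^{-1}(A_t)$ contains the Euclidean $t$-neighbourhood of $p_N^{-1}(A)$ on the sphere. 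That neighbourhood in turn contains the geodesic $t$-neighbourhood.
\item Compare $p_N^{-1}(A)$ with a cap $C_N$ of the same measure. Up to a vanishing error, the projection of $C_N$ and of its $t$-neighbourhood to the first coordinate converge to half-lines $\{x_1\le a_N\}$ and $\{x_1\le a_N+t\}$, with $\Phi(a_N)\to\gamma^n(A)$.
\item Let $N\to\infty$ in the resulting inequality $\sigma_N(p_N^{-1}(A_t))\ge\sigma_N((C_N)_t)$.
\end{enumerate}

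The main obstacle is the limiting step: the caps must be controlled uniformly, and the approximation error in $\sigma_N\bigl((C_N)_t\bigr)\to\Phi(a+t)$ must go to zero. This follows because the first coordinate under $\sigma_N$ has density proportional to $(1-s^2/N)^{(N-3)/2}$, which converges to $\varphi$. We may also have to pass from geodesic to chordal distance; the two agree to first order on scales $O(1)$ relative to radius $\sqrt N$, so a slightly smaller $t'<t$ suffices, and we then let $t'\uparrow t$ using continuity of $\Phi$.

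If a citation suffices instead, the statement is exactly the one in \cite{Borell,SudakovTsirelson}, and the paper only uses it as a black box. Alternatively, Bobkov's functional inequality $I(\int f)\le\int\sqrt{I(f)^2+|\nabla f|^2}$ with $I=\varphi\circ\Phi^{-1}$ gives the result. It tensorizes from the two-point space and integrates along $t$ to yield the stated bound.
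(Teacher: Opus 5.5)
Your proposal is correct and essentially matches the paper: the paper gives no proof of its own and simply cites Borell and Sudakov--Tsirelson, and your Poincar\'e-limit argument is exactly the classical proof in those references (L\'evy's isoperimetry on $S^{N-1}(\sqrt N)$, projection to $\R^n$, total-variation convergence of the projected measures, and caps $\{y_1\le b_N\}$ turning into half-spaces). The geodesic-to-chordal adjustment with $t'<t$ is unnecessary: $p_N$ is already $1$-Lipschitz for the geodesic metric, and the geodesic $t$-neighbourhood of $\{y_1\le b_N\}$ is exactly $\{y_1<b_N\cos(t/\sqrt N)+\sqrt{N-b_N^2}\sin(t/\sqrt N)\}$, whose threshold tends to $a+t$.
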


\begin{lemma}\label{lem:gauss-conc}
Let $u\in\Lip_1(\R^n)$, and let $m_u$ be a \emph{median} of $u_*\gamma^n$, that is, a real number satisfying
\[
 \gamma^n(u\le m_u)\ge\frac12,
 \qquad
 \gamma^n(u\ge m_u)\ge\frac12.
\]
Then, for every $t>0$,
\begin{equation*}
 \gamma^n(\abs{u-m_u}>t)
 \le2(1-\Phi(t))\le2e^{-t^2/2}.
\end{equation*}
In particular, the right-hand side does not depend on the dimension $n$.
\end{lemma}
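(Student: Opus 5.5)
The plan is to apply \Cref{thm:gauss-iso} separately to the two sublevel sets $A\coloneqq\{u\le m_u\}$ and $B\coloneqq\{u\ge m_u\}$. By the definition of a median, $\gamma^n(A)\ge\frac12$ and $\gamma^n(B)\ge\frac12$, so $\Phi^{-1}(\gamma^n(A))\ge0$ and likewise for $B$.

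The first step is a containment. Since $u$ is $1$-Lipschitz, any $x$ with $d_{\R^n}(x,A)<t$ satisfies $u(x)<m_u+t$. Pick $a\in A$ with $\norm{x-a}_2<t$; then $u(x)\le u(a)+\norm{x-a}_2<m_u+t$. Hence $A_t\subset\{u<m_u+t\}$, and symmetrically $B_t\subset\{u>m_u-t\}$.

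Next, \Cref{thm:gauss-iso} together with the monotonicity of $\Phi$ gives
\[
 \gamma^n(u<m_u+t)\ge\gamma^n(A_t)\ge\Phi\bigl(\Phi^{-1}(\gamma^n(A))+t\bigr)\ge\Phi(t).
\]
Thus $\gamma^n(u\ge m_u+t)\le1-\Phi(t)$, and the same argument gives $\gamma^n(u\le m_u-t)\le1-\Phi(t)$. Since $\{\abs{u-m_u}>t\}$ is contained in the union of these two events, a union bound yields
\[
 \gamma^n(\abs{u-m_u}>t)\le2(1-\Phi(t)).
\]

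The last step is the elementary Gaussian tail bound $1-\Phi(t)\le e^{-t^2/2}$ for $t>0$. A standard Chernoff argument gives it: for $\lambda>0$, $1-\Phi(t)\le e^{-\lambda t}\int e^{\lambda s}\varphi(s)\dd s=e^{\lambda^2/2-\lambda t}$, and choosing $\lambda=t$ gives the claim. This even yields the sharper constant $\tfrac12e^{-t^2/2}$, but that is not needed. No constant in the argument depends on $n$, which gives the final remark.

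There is no real obstacle here. The only care needed is with edge cases in the measurability and convention: if $\gamma^n(A)=1$, the convention $\Phi^{-1}(1)=+\infty$ makes the bound trivially true, and the sets $A$ and $B$ are closed, hence Borel.
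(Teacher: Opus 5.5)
Your proposal is correct and follows essentially the same route as the paper: Gaussian isoperimetry applied to $\{u\le m_u\}$, with the lower tail handled through $\{u\ge m_u\}$ where the paper instead applies the same argument to $-u$ (these are equivalent), then a union bound and the tail estimate $1-\Phi(t)\le e^{-t^2/2}$, which the paper cites rather than proves. One small correction to a side remark: your Chernoff computation with $\lambda=t$ gives exactly $e^{-t^2/2}$, not $\tfrac12e^{-t^2/2}$; the sharper bound is true but needs a different argument (e.g.\ substituting $s=t+v$ and using $e^{-tv}\le1$), and as you say it is not needed.
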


\begin{proof}
Set $A\coloneqq\{u\le m_u\}$. Then $\gamma^n(A)\ge1/2$. If $u(x)>m_u+t$, then, for every $a\in A$,
\[
 t<u(x)-u(a)\le\norm{x-a}_2,
\]
and hence $x\notin A_t$. Therefore, \Cref{thm:gauss-iso} gives
\[
 \gamma^n(u>m_u+t)
 \le1-\gamma^n(A_t)
 \le1-\Phi(t).
\]
Since $-m_u$ is a median of $-u$, applying the same argument to $-u$ gives $\gamma^n(u<m_u-t)\le1-\Phi(t)$. Adding the two inequalities and using the standard normal tail bound $1-\Phi(t)\le e^{-t^2/2}$ \cite{Gordon} proves the assertion.
\end{proof}

\begin{lemma}\label{lem:lsc-integral}
Suppose that Borel probability measures $\nu_j,\nu$ on $\R$ satisfy $\nu_j\Rightarrow\nu$ as $j\to\infty$, and let $g\colon\R\to[0,\infty]$ be lower semicontinuous. Then
\begin{equation*}
 \int_\R g\dd\nu\le\liminf_{j\to\infty}\int_\R g\dd\nu_j.
\end{equation*}
\end{lemma}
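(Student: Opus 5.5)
The plan is to reduce to bounded continuous functions, where weak convergence applies directly, by approximating $g$ from below. Every lower semicontinuous $g\colon\R\to[0,\infty]$ is the pointwise supremum of an increasing sequence of bounded, nonnegative, continuous (indeed Lipschitz) functions. Concretely, I would take the Moreau--Yosida-type inf-convolutions
\[
 g_k(x)\coloneqq\min\Bigl\{k,\ \inf_{y\in\R}\bigl(g(y)+k\abs{x-y}\bigr)\Bigr\}\qquad(k\in\N).
\]
These satisfy the following, for every $k\in\N$ and $x\in\R$:
\[
 0\le g_k\le g_{k+1}\le g,\qquad g_k\le k,\qquad g_k\in\Lip(\R)\ \text{with Lipschitz constant}\le k.
\]

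The key step is to check that $g_k(x)\uparrow g(x)$ for every $x$; this is the only place where lower semicontinuity enters, and it is the main (mild) obstacle. Fix $x$ and any $c<g(x)$, with $c$ finite. By lower semicontinuity there is $\delta>0$ such that $g(y)>c$ whenever $\abs{x-y}<\delta$. For such $y$ we have $g(y)+k\abs{x-y}>c$. For $\abs{x-y}\ge\delta$ we have $g(y)+k\abs{x-y}\ge k\delta$, since $g\ge0$. Hence, once $k\ge\max\{c,\,c/\delta\}$, we get $g_k(x)\ge c$. Letting $c\uparrow g(x)$ gives the convergence, including the case $g(x)=+\infty$.

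The rest is routine. For each fixed $k$, the function $g_k$ is bounded and continuous, so $\nu_j\Rightarrow\nu$ gives
\[
 \int g_k\dd\nu=\lim_{j\to\infty}\int g_k\dd\nu_j\le\liminf_{j\to\infty}\int g\dd\nu_j,
\]
using $g_k\le g$. Finally, the monotone convergence theorem yields $\int g_k\dd\nu\uparrow\int g\dd\nu$ as $k\to\infty$, which proves the inequality. As an alternative route, one could invoke the portmanteau inequality $\nu(U)\le\liminf_j\nu_j(U)$ for open $U$, applied to the open superlevel sets $\{g>t\}$, together with the layer-cake formula and Fatou's lemma in $t$. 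I would keep the inf-convolution argument as the primary proof, since it is self-contained.
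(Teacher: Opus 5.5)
Your proof is correct, but your primary argument takes a different route from the paper's. The paper's proof is exactly the alternative you mention at the end. It applies the open-set Portmanteau inequality $\nu(U_t)\le\liminf_j\nu_j(U_t)$ to the superlevel sets $U_t=\{g>t\}$, which are open by lower semicontinuity. It then combines the layer-cake identity $\int_{\R} g\dd\nu=\int_0^\infty\nu(U_t)\dd t$ with Fatou's lemma in the variable $t$.

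Your inf-convolution proof instead approximates $g$ from below by the bounded $k$-Lipschitz functions $g_k$. You apply the definition of weak convergence to each $g_k$ and conclude by monotone convergence. Your check that $g_k\uparrow g$ pointwise is sound. The truncation at level $k$ also handles points where $g=+\infty$ and the degenerate case $g\equiv+\infty$, where the infimum is infinite everywhere.

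The paper's route buys brevity, since it treats the Portmanteau theorem as a black box. Yours buys self-containedness, since it uses only integrals of bounded Lipschitz test functions. In effect, you inline at the level of $g$ itself the standard proof of the open-set Portmanteau inequality, which approximates $\ind_U$ from below by such functions. Like the paper's argument, yours works verbatim on any metric space.
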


\begin{proof}
We derive the required integral form from the set version of the Portmanteau theorem by using the layer-cake representation. For $t\ge0$, set $U_t\coloneqq\{g>t\}$. The lower semicontinuity of $g$ implies that $U_t$ is open, and the Portmanteau theorem (cf.~\cite[Lemma~1.13(3)]{ShioyaBook}) gives $\nu(U_t)\le\liminf_{j\to\infty}\nu_j(U_t)$. Combining the identity $\int_\R g\dd\nu=\int_0^\infty\nu(U_t)\dd t$ for nonnegative measurable functions with Fatou's lemma, we obtain
\[
 \int_\R g\dd\nu
 \le\int_0^\infty\liminf_{j\to\infty}\nu_j(U_t)\dd t
 \le\liminf_{j\to\infty}\int_0^\infty\nu_j(U_t)\dd t
 =\liminf_{j\to\infty}\int_\R g\dd\nu_j.
\]
\end{proof}

\begin{lemma}\label{lem:UI}
Let $n_j\in\N$ and $F_j\in\Lip_1(\R^{n_j})$, and suppose that
\begin{equation}
 (F_j)_*\gamma^{n_j}\Rightarrow\nu\quad(j\to\infty).           \label{eq:Fweak}
\end{equation}
Then, for every $p>0$,
\begin{equation}
 \sup_j\int_{\R^{n_j}}\abs{F_j}^p\dd\gamma^{n_j}<\infty.         \label{eq:uniform-mom}
\end{equation}
Moreover, if $\psi\colon\R\to\R$ is continuous and satisfies
$\abs{\psi(t)}\le C(1+\abs t^p)$ for some $C>0$ and $p>0$, then
\begin{equation}
 \int_{\R^{n_j}}\psi(F_j)\dd\gamma^{n_j}
 \longrightarrow\int_\R\psi(t)\dd\nu(t)\quad(j\to\infty).    \label{eq:moment-conv}
\end{equation}
In particular, every absolute moment of positive order and the expectation of every fixed polynomial converge, and $\nu$ has moments of all finite orders.
\end{lemma}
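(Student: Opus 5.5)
The plan is to pass from weak convergence to moment control through a uniform sub-Gaussian tail bound. The key point is that the medians of $(F_j)_*\gamma^{n_j}$ stay bounded, and this comes from tightness.

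\emph{Step 1 (bounded medians).} Let $m_j$ be a median of $(F_j)_*\gamma^{n_j}$. By \eqref{eq:Fweak} the sequence of laws is tight. Choose $R$ with $(F_j)_*\gamma^{n_j}(\{\abs t>R\})<1/2$ for all $j$; for finitely many exceptional $j$, enlarge $R$ to cover them. If $m_j>R$, then
\[
 \gamma^{n_j}(F_j\ge m_j)\le\gamma^{n_j}(\abs{F_j}>R)<\tfrac12,
\]
which contradicts the definition of a median. The case $m_j<-R$ is symmetric. Hence $\abs{m_j}\le R$ for all $j$.

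\emph{Step 2 (uniform tails and moments).} \Cref{lem:gauss-conc} gives, for $t>0$,
\[
 \gamma^{n_j}(\abs{F_j}>R+t)\le\gamma^{n_j}(\abs{F_j-m_j}>t)\le2e^{-t^2/2}.
\]
The layer-cake formula then yields
\[
 \int\abs{F_j}^p\dd\gamma^{n_j}=\int_0^\infty ps^{p-1}\gamma^{n_j}(\abs{F_j}>s)\dd s\le R^p+\int_R^\infty ps^{p-1}\,2e^{-(s-R)^2/2}\dd s,
\]
which is finite and independent of $j$. This proves \eqref{eq:uniform-mom}. For the moments of $\nu$, apply \Cref{lem:lsc-integral} to the continuous nonnegative function $g(t)=\abs t^p$: it gives $\int\abs t^p\dd\nu\le\liminf_j\int\abs{F_j}^p\dd\gamma^{n_j}<\infty$.

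\emph{Step 3 (convergence \eqref{eq:moment-conv}).} This is the standard uniform-integrability truncation. For $M>0$, let $\chi_M$ be a continuous cutoff with values in $[0,1]$, equal to $1$ on $[-M,M]$ and supported in $[-M-1,M+1]$. Then $\psi\chi_M$ is bounded and continuous, so \eqref{eq:Fweak} gives
\[
 \int\psi\chi_M(F_j)\dd\gamma^{n_j}\to\int\psi\chi_M\dd\nu .
\]
The remainder is controlled using $\abs{\psi}(1-\chi_M)\le C(1+\abs t^p)\ind_{\abs t>M}\le C'\abs t^{2p}M^{-p}$ for $M\ge1$. Consequently
\[
 \sup_j\int\abs{\psi(F_j)}(1-\chi_M(F_j))\dd\gamma^{n_j}\le C'M^{-p}\sup_j\int\abs{F_j}^{2p}\dd\gamma^{n_j},
\]
and this tends to $0$ as $M\to\infty$ by Step 2 applied with exponent $2p$. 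The same bound holds for $\nu$ by its moment bound. Letting $j\to\infty$ first and then $M\to\infty$ proves \eqref{eq:moment-conv}.

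Polynomials, and $\abs t^q$ for $q>0$, satisfy the growth hypothesis, so the final "in particular" statement follows. The only step that is not routine is Step 1: it is where the unknown centering is controlled by tightness, so that the dimension-free concentration bound of \Cref{lem:gauss-conc} becomes a bound on $\abs{F_j}$ itself. Everything after that is standard.
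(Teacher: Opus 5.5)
Your proposal is correct and follows essentially the same route as the paper. You bound the medians using weak convergence (via tightness, where the paper applies Portmanteau to $(-R,R)$), get dimension-free moment bounds from \Cref{lem:gauss-conc} and the layer-cake formula, obtain the moments of $\nu$ from \Cref{lem:lsc-integral}, and prove \eqref{eq:moment-conv} by truncation and uniform integrability; the remaining differences (a continuous cutoff $\chi_M$ with exponent $2p$ instead of the paper's clamp $\psi\circ\tau_M$ with a general $s>p$) are cosmetic.
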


\begin{proof}
Choose $R>0$ such that $\nu((-R,R))>3/4$.
Since $(-R,R)$ is open, \eqref{eq:Fweak} and the Portmanteau theorem give
\[
 \liminf_{j\to\infty}\gamma^{n_j}(|F_j|<R)
 \ge\nu((-R,R))>\frac34.
\]
Thus, apart from finitely many initial terms,
$\gamma^{n_j}(|F_j|\le R)>1/2$. If $m_j$ is any median of $F_j$, then
$m_j\in[-R,R]$. Indeed, if $m_j>R$, then
$\gamma^{n_j}(F_j\ge m_j)\le\gamma^{n_j}(F_j>R)<1/2$, contrary to the definition of a median.
The case $m_j<-R$ is analogous. Enlarging the bound to include the finitely many initial medians, we obtain
\begin{equation}
 M_0\coloneqq\sup_j|m_j|<\infty.                                        \label{eq:med-bound}
\end{equation}

By \Cref{lem:gauss-conc},
\[
 \gamma^{n_j}(|F_j-m_j|>t)\le2e^{-t^2/2}.
\]
Applying the identity
\[
 \mathbb EY^p=p\int_0^\infty t^{p-1}\mathbb P(Y>t)\dd t
\]
for a nonnegative random variable $Y$ and $p>0$ to $Y=|F_j-m_j|$ gives
\[
 \sup_j\mathbb E|F_j-m_j|^p
 \le2p\int_0^\infty t^{p-1}e^{-t^2/2}\dd t<\infty.
\]
Since
$(a+b)^p\le2^{\max\{p-1,0\}}(a^p+b^p)$ for $a,b\ge0$, combining this estimate with \eqref{eq:med-bound} proves \eqref{eq:uniform-mom}.

We next prove \eqref{eq:moment-conv}. Fix a continuous function $\psi$ satisfying $|\psi(t)|\le C(1+|t|^p)$ for some $C>0$ and $p>0$, and take $s>p$. Applying \eqref{eq:uniform-mom} with exponent $s$ gives
\[
 K_s\coloneqq\sup_j\int_{\R^{n_j}}\abs{F_j}^s\dd\gamma^{n_j}<\infty.
\]
For $M>0$, set
\[
 \tau_M(t)\coloneqq\max\{-M,\min\{t,M\}\},
 \qquad
 \psi_M(t)\coloneqq\psi(\tau_M(t)).
\]
Since $\psi_M$ is bounded and continuous, the weak convergence \eqref{eq:Fweak} gives
\begin{equation}
 \int_{\R^{n_j}}\psi_M(F_j)\dd\gamma^{n_j}
 \longrightarrow \int_\R\psi_M(t)\dd\nu(t)\quad(j\to\infty). \label{eq:truncated-psi}
\end{equation}
On the other hand, on $|F_j|>M$ we have $1\le M^{-s}|F_j|^s$ and
$|F_j|^p\le M^{p-s}|F_j|^s$. Therefore,
\begin{equation}
 \sup_j\int_{\abs{F_j}>M}(1+\abs{F_j}^p)\dd\gamma^{n_j}
 \le (M^{-s}+M^{p-s})K_s
 \xrightarrow[M\to\infty]{}0.                                     \label{eq:tail-unif}
\end{equation}

Apply \Cref{lem:lsc-integral} to the nonnegative lower semicontinuous function $g(t)\coloneqq|t|^s$. Then
\[
 \int_\R |t|^s\dd\nu(t)\le K_s<\infty.
\]
Since the right-hand side is finite and independent of $M$, the same estimate gives
\[
 \int_{|t|>M}(1+|t|^p)\dd\nu(t)\xrightarrow[M\to\infty]{}0.
\]
Furthermore, $\psi_M(t)=\psi(t)$ when $|t|\le M$, while
\[
 |\psi(t)-\psi_M(t)|
 \le 2C(1+|t|^p)
\]
when $|t|>M$. Combining \eqref{eq:tail-unif} and the tail estimate for the limiting measure with \eqref{eq:truncated-psi}, and letting first $j\to\infty$ for fixed $M$ and then $M\to\infty$, proves \eqref{eq:moment-conv}.
\end{proof}

The following are the standard formulas obtained by integration by parts against the standard normal density \cite{Janson}.

\begin{lemma}\label{lem:normal-mom}
Let $\xi\sim N(0,1)$. For every integer $k\ge0$,
\begin{equation*}
 \mathbb E\xi^{2k+1}=0,
 \qquad
 \mathbb E\xi^{2k}=(2k-1)!!\coloneqq1\cdot3\cdots(2k-1),
\end{equation*}
where $(-1)!!\coloneqq1$. In particular,
\[
 \mathbb E\xi^2=1,
 \quad \mathbb E\xi^4=3,
 \quad \mathbb E\xi^6=15.
\]
\end{lemma}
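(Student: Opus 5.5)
The plan is to prove both identities by Gaussian integration by parts. The basic identity is
\[
 \mathbb E[\xi g(\xi)]=\mathbb E[g'(\xi)],
\]
valid for every polynomial (or, more generally, $C^1$ function of polynomial growth) $g$. It follows from $\varphi'(t)=-t\varphi(t)$: we write
\[
 \int_\R t\,g(t)\varphi(t)\dd t=-\int_\R g(t)\varphi'(t)\dd t,
\]
then integrate by parts. The boundary terms $g(t)\varphi(t)$ vanish as $|t|\to\infty$ because $\varphi$ decays faster than any polynomial grows. We also need every moment to be finite, which holds for the same reason, since $|t|^m\varphi(t)$ is integrable for each $m$.

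For the odd moments no computation is needed. The density $\varphi$ is even and $t^{2k+1}$ is odd and integrable, so the integral $\int_\R t^{2k+1}\varphi(t)\dd t$ vanishes by the substitution $t\mapsto -t$.

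For the even moments, set $m_{2k}\coloneqq\mathbb E\xi^{2k}$, so that $m_0=1$ because $\varphi$ is a probability density. Applying the identity with $g(t)=t^{2k-1}$ for $k\ge1$ gives the recursion
\[
 m_{2k}=(2k-1)\,m_{2k-2}.
\]
Induction on $k$ then yields $m_{2k}=(2k-1)!!$, with the convention $(-1)!!=1$ covering the case $k=0$. The listed special values follow by direct evaluation: $m_2=1$, $m_4=3\cdot1=3$, and $m_6=5\cdot3=15$.

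No step presents a real obstacle. The only point needing care is justifying the vanishing boundary terms and the integrability, and both come from the rapid decay of $e^{-t^2/2}$.
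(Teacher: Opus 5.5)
Your proof is correct and takes the same route the paper points to: the paper gives no written proof and cites these as the standard formulas obtained by integration by parts against the standard normal density. That is exactly your recursion $m_{2k}=(2k-1)m_{2k-2}$ from $\varphi'(t)=-t\varphi(t)$, combined with the symmetry argument for the odd moments.
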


\subsection{Low-degree component estimates via Wiener chaos}

We collect Wiener-chaos estimates used below and later in the proof of \Cref{thm:H1-not-U1}. We refer to \cite[Chapter 2]{Janson} for Wiener chaos.

Write points of $\R^n$ as $x=(x_1,\dots,x_n)$. For a locally integrable function $u$ on $\R^n$, its \emph{weak derivative} $\partial_i u$ is the locally integrable function satisfying
\[
 \int_{\R^n}u\,\partial_i\varphi\dd x
 =-\int_{\R^n}(\partial_i u)\varphi\dd x
\]
for every $\varphi\in C_c^\infty(\R^n)$. For locally Lipschitz functions, weak derivatives agree almost everywhere with the classical derivatives. Let $W^{1,2}(\gamma^n)$ be the \emph{Gaussian Sobolev space} of all $u\in L^2(\gamma^n)$ such that every weak derivative $\partial_i u$ belongs to $L^2(\gamma^n)$, with norm
\[
 \norm{u}_{W^{1,2}(\gamma^n)}^2
 \coloneqq\norm{u}_{L^2(\gamma^n)}^2
   +\sum_{i=1}^n\norm{\partial_i u}_{L^2(\gamma^n)}^2.
\]

For $u\in W^{1,2}(\gamma^n)$, write
\[
 \norm{\nabla u(x)}_2\coloneqq\left(\sum_{i=1}^n|\partial_i u(x)|^2\right)^{1/2}
\]
for $\gamma^n$-almost every $x\in\R^n$. The notation $\norm{\nabla u}_2$ without $x$ denotes this pointwise function. Define the \emph{$L^2$-norm of the gradient} by
\[
 \norm{\nabla u}_{L^2(\gamma^n;\R^n)}
 \coloneqq\left(\int_{\R^n}\norm{\nabla u(x)}_2^2\dd\gamma^n(x)\right)^{1/2}
 =\left(\sum_{i=1}^n\norm{\partial_i u}_{L^2(\gamma^n)}^2\right)^{1/2}.
\]

Define the \emph{one-variable Hermite polynomials} $\mathrm{He}_k$ by the generating function
\[
 \exp\left(tx-\frac{t^2}{2}\right)
 =\sum_{k=0}^\infty\mathrm{He}_k(x)\frac{t^k}{k!}.
\]
For a multi-index $\alpha=(\alpha_1,\dots,\alpha_n)\in\mathbb N_0^n$, set
\[
 |\alpha|\coloneqq\sum_{i=1}^n\alpha_i,\qquad
 \alpha!\coloneqq\prod_{i=1}^n\alpha_i!,\qquad
 \mathrm{He}_\alpha(x)\coloneqq\prod_{i=1}^n\mathrm{He}_{\alpha_i}(x_i).
\]
We call $\mathrm{He}_\alpha$ the \emph{multivariate Hermite polynomial} associated with $\alpha$. The space
\[
 \mathcal H_m\coloneqq
 \operatorname{span}\{\mathrm{He}_\alpha\mid|\alpha|=m\}
 \subset L^2(\gamma^n)
\]
is called the \emph{$m$th Wiener chaos}. The Hermite polynomials form an orthogonal basis of $L^2(\gamma^n)$ \cite[Theorem~1.1.1]{Nualart}; hence
\begin{equation}
 L^2(\gamma^n)=\bigoplus_{m=0}^\infty\mathcal H_m.                 \label{eq:chaos-decomposition}
\end{equation}
Write
\[
 P_m\colon L^2(\gamma^n)\longrightarrow\mathcal H_m
\]
for the orthogonal projection onto $\mathcal H_m$. Thus, for every $u\in L^2(\gamma^n)$,
\begin{equation}
 u=\sum_{m=0}^\infty P_mu,\qquad
 \norm{u}_2^2=\sum_{m=0}^\infty\norm{P_mu}_2^2,                  \label{eq:chaos-Parseval}
\end{equation}
where the first series converges in $L^2(\gamma^n)$. The term $P_mu$ extracts the component of $u$ of Hermite degree $m$. More concretely, it is the element of $\mathcal H_m$ closest to $u$ in $L^2$.

In the one-variable generating function,
\[
 \exp\left(t(-x)-\frac{t^2}{2}\right)
 =\exp\left((-t)x-\frac{(-t)^2}{2}\right).
\]
Comparing the coefficients of $t^k/k!$ gives $\mathrm{He}_k(-x)=(-1)^k\mathrm{He}_k(x)$. The product definition therefore yields
\begin{equation}
 \mathrm{He}_\alpha(-x)=(-1)^{|\alpha|}\mathrm{He}_\alpha(x).      \label{eq:Hermite-parity}
\end{equation}
Thus functions in $\mathcal H_m$ are even when $m$ is even and odd when $m$ is odd.

Write $\operatorname{Sym}(n)$ for the set of real symmetric $n\times n$ matrices. The space of \emph{centered quadratic forms} is
\begin{equation*}
 \mathcal Q_n\coloneqq\{q_A:A\in\operatorname{Sym}(n)\},
 \qquad
 q_A(x)\coloneqq x^{\mathsf T}Ax-\tr A.
\end{equation*}
Here $x$ is a column vector and $x^{\mathsf T}$ is its transpose.

\begin{lemma}\label{lem:quad-trace}
\[
 \mathcal H_0=\operatorname{span}\{1\},\qquad
 \mathcal H_1=\operatorname{span}\{x_1,\dots,x_n\},\qquad
 \mathcal H_2=\mathcal Q_n.
\]
Moreover, for every $A,B\in\operatorname{Sym}(n)$,
\begin{equation*}
 \int_{\R^n}q_Aq_B\dd\gamma^n=2\tr(AB).
\end{equation*}
holds.
The map $\operatorname{Sym}(n)\ni A\mapsto q_A\in\mathcal Q_n$ is bijective.
\end{lemma}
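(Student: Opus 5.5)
The plan is to compute the first few Hermite polynomials explicitly from the generating function. Expanding $\exp(tx-t^2/2)$ up to order $t^2$ gives
\[
 \mathrm{He}_0=1,\qquad \mathrm{He}_1(x)=x,\qquad \mathrm{He}_2(x)=x^2-1.
\]
The identifications of the chaoses then follow directly.

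\emph{Identifying $\mathcal H_0,\mathcal H_1,\mathcal H_2$.} For $\abs\alpha=0$ the only Hermite polynomial is the constant $1$, so $\mathcal H_0=\operatorname{span}\{1\}$. For $\abs\alpha=1$ we get $\mathrm{He}_\alpha(x)=x_i$, so $\mathcal H_1=\operatorname{span}\{x_1,\dots,x_n\}$. For $\abs\alpha=2$ there are two types of multi-index:
\[
 \mathrm{He}_{2e_i}(x)=x_i^2-1=q_{E_{ii}}(x),
 \qquad
 \mathrm{He}_{e_i+e_j}(x)=x_ix_j=q_{(E_{ij}+E_{ji})/2}(x)\quad(i<j).
\]
The second identity uses that $(E_{ij}+E_{ji})/2$ is traceless for $i\ne j$. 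These matrices span $\operatorname{Sym}(n)$, and $A\mapsto q_A$ is linear, so $\mathcal H_2\subset\mathcal Q_n$. Conversely, each $q_A$ is a linear combination of these generators, so $\mathcal Q_n\subset\mathcal H_2$. Hence $\mathcal H_2=\mathcal Q_n$.

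\emph{The inner product formula.} Write $q_A=\sum_i a_{ii}(x_i^2-1)+2\sum_{i<j}a_{ij}x_ix_j$, and similarly for $q_B$. Then:
\begin{itemize}
 \item Distinct elements of the family $\{x_i^2-1\}_i\cup\{x_ix_j\}_{i<j}$ are orthogonal in $L^2(\gamma^n)$. This follows from independence of the coordinates and $\mathbb E\xi=\mathbb E(\xi^2-1)=0$, or directly from orthogonality of the Hermite basis.
 \item By \Cref{lem:normal-mom}, $\norm{x_i^2-1}_2^2=\mathbb E\xi^4-2\mathbb E\xi^2+1=2$ and $\norm{x_ix_j}_2^2=1$.
\end{itemize}
Therefore
\[
 \int q_Aq_B\dd\gamma^n=2\sum_i a_{ii}b_{ii}+4\sum_{i<j}a_{ij}b_{ij}=2\sum_{i,j}a_{ij}b_{ij}=2\tr(AB),
\]
where the last equality uses the symmetry of $A$ and $B$.

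\emph{Bijectivity.} Surjectivity holds by the definition of $\mathcal Q_n$. For injectivity, suppose $q_A=0$. Taking $B=A$ in the inner product formula gives $2\tr(A^2)=\norm{q_A}_2^2=0$. Since $A$ is symmetric, $\tr(A^2)=\sum_{i,j}a_{ij}^2$, so $A=0$. A polynomial identity argument would also work, but the norm identity is cleaner.

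The argument has no real obstacle. The only care needed is in the bookkeeping of the factor $2$ on off-diagonal entries and $1/2$ on the off-diagonal basis matrices.
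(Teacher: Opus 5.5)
Your proof is correct and follows the paper's argument: the same low-degree Hermite polynomials, the same generators $x_i^2-1=q_{E_{ii}}$ and $x_ix_j=q_{(E_{ij}+E_{ji})/2}$ for the two inclusions, and the same injectivity argument via $\norm{q_A}_2^2=2\tr(A^2)$. The only minor difference is how you evaluate $\int q_Aq_B\dd\gamma^n$: you use the orthogonality and the norms ($2$ and $1$) of $x_i^2-1$ and $x_ix_j$, whereas the paper expands $q_A=\sum_{i,j}A_{ij}(x_ix_j-\delta_{ij})$ and applies the fourth-moment identity $\mathbb E[x_ix_jx_kx_l]=\delta_{ij}\delta_{kl}+\delta_{ik}\delta_{jl}+\delta_{il}\delta_{jk}$; the two computations are equivalent.
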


\begin{proof}
The coefficients of degrees zero, one, and two in the one-variable generating function give $\mathrm{He}_0(x)=1$, $\mathrm{He}_1(x)=x$, and $\mathrm{He}_2(x)=x^2-1$. Thus $\mathcal H_0$ and $\mathcal H_1$ are as stated, and
\[
 \mathcal H_2
 =\operatorname{span}\{x_i^2-1\mid1\le i\le n\}
  +\operatorname{span}\{x_ix_j\mid1\le i<j\le n\}.
\]
On the other hand, separating the diagonal and off-diagonal terms gives
\[
 x^{\mathsf T}Ax=\sum_{i,j=1}^nA_{ij}x_ix_j=\sum_{i=1}^nA_{ii}x_i^2+\sum_{i\ne j}A_{ij}x_ix_j.
\]
By the symmetry $A_{ij}=A_{ji}$,
\[
 \sum_{i\ne j}A_{ij}x_ix_j=\sum_{1\le i<j\le n}(A_{ij}+A_{ji})x_ix_j=2\sum_{1\le i<j\le n}A_{ij}x_ix_j.
\]
Since $\tr A=\sum_{i=1}^nA_{ii}$, we have
\[
 q_A(x)=x^{\mathsf T}Ax-\tr A=\sum_{i=1}^nA_{ii}(x_i^2-1)+2\sum_{1\le i<j\le n}A_{ij}x_ix_j.
\]
Therefore, $\mathcal Q_n\subset\mathcal H_2$. Conversely, if $A_{ii}=1$ and all other entries are zero, then $x_i^2-1=q_A$. If $A_{ij}=A_{ji}=1/2$ and all other entries are zero, then $x_ix_j=q_A$. This proves $\mathcal H_2=\mathcal Q_n$.

Take any $A,B\in\operatorname{Sym}(n)$ and regard $x=(x_1,\dots,x_n)$ as a random variable with distribution $\gamma^n$. Writing $\delta_{ij}$ for the Kronecker delta, we have $\mathbb E[x_ix_j]=\delta_{ij}$ and hence
\[
 q_A(x)=\sum_{i,j=1}^nA_{ij}(x_ix_j-\delta_{ij}).
\]
Independence and \Cref{lem:normal-mom} imply that, for every $i,j,k,l$,
\[
 \mathbb E[x_ix_jx_kx_l]=\delta_{ij}\delta_{kl}+\delta_{ik}\delta_{jl}+\delta_{il}\delta_{jk}.
\]
Indeed, if all four indices are equal, both sides equal $3$. If two distinct values each occur twice, both sides equal $1$. In every other case, at least one independent coordinate appears to an odd power on the left-hand side, while all three products on the right-hand side vanish, so both sides equal $0$. Therefore,
\[
 \mathbb E[(x_ix_j-\delta_{ij})(x_kx_l-\delta_{kl})]
 =\delta_{ik}\delta_{jl}+\delta_{il}\delta_{jk}.
\]
Substituting this identity into the representations of $q_A$ and $q_B$ and using the symmetry of $B$, we obtain
\begin{align*}
 \int_{\R^n}q_Aq_B\dd\gamma^n
 &=\sum_{i,j,k,l=1}^nA_{ij}B_{kl}(\delta_{ik}\delta_{jl}+\delta_{il}\delta_{jk})\\
 &=\sum_{i,j=1}^nA_{ij}B_{ij}+\sum_{i,j=1}^nA_{ij}B_{ji}
 =2\sum_{i,j=1}^nA_{ij}B_{ij}=2\tr(AB).
\end{align*}
This proves the integral identity. In particular, taking $B=A$ gives
\[
 \norm{q_A}_2^2=2\tr(A^2)=2\sum_{i,j=1}^nA_{ij}^2.
\]
Thus $q_A=0$ implies $A=0$, so the map $A\mapsto q_A$ is injective. Its image is $\mathcal Q_n$ by definition, so it is also surjective. Moreover, $\mathcal Q_n=\mathcal H_2$ is a finite-dimensional closed subspace.
\end{proof}

\begin{lemma}\label{lem:chaos-energy}
For every $u,v\in W^{1,2}(\gamma^n)$,
\begin{align}
 \int_{\R^n}\norm{\nabla u}_2^2\dd\gamma^n
 &=\sum_{m=1}^\infty m\norm{P_mu}_2^2,                           \label{eq:chaos-energy}\\
 \int_{\R^n}\inner{\nabla u}{\nabla v}\dd\gamma^n
 &=\sum_{m=1}^\infty m\inner{P_mu}{P_mv}_{L^2(\gamma^n)}.         \label{eq:chaos-energy-polarized}
\end{align}
\end{lemma}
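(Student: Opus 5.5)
The plan is to prove the identity on the dense class of polynomials first, and then extend it to all of $W^{1,2}(\gamma^n)$ by an approximation argument. The polarized form \eqref{eq:chaos-energy-polarized} follows from \eqref{eq:chaos-energy} by the polarization identity $\inner{\nabla u}{\nabla v}=\frac14(\norm{\nabla(u+v)}_2^2-\norm{\nabla(u-v)}_2^2)$ and linearity of $P_m$. The series on the right converges absolutely by Cauchy--Schwarz once the diagonal series converge. So the work is in \eqref{eq:chaos-energy}.

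\emph{Step 1: Hermite polynomials.} From the generating function, differentiating in $x$ gives $\mathrm{He}_k'=k\,\mathrm{He}_{k-1}$. Hence
\[
 \partial_i\mathrm{He}_\alpha=\alpha_i\mathrm{He}_{\alpha-e_i}.
\]
Orthogonality of the Hermite polynomials, with $\norm{\mathrm{He}_\alpha}_2^2=\alpha!$ (also read off from the generating function), then shows two things:
\begin{itemize}
 \item $\int\inner{\nabla\mathrm{He}_\alpha}{\nabla\mathrm{He}_\beta}\dd\gamma^n=0$ for $\alpha\ne\beta$;
 \item $\int\norm{\nabla\mathrm{He}_\alpha}_2^2\dd\gamma^n=\sum_i\alpha_i^2(\alpha-e_i)!=\sum_i\alpha_i\,\alpha!=|\alpha|\,\alpha!$.
\end{itemize}
Therefore, for every polynomial $u=\sum_\alpha c_\alpha\mathrm{He}_\alpha$ (a finite sum),
\[
 \int\norm{\nabla u}_2^2\dd\gamma^n=\sum_\alpha|\alpha|\,c_\alpha^2\,\alpha!=\sum_m m\norm{P_mu}_2^2 .
\]

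\emph{Step 2: the operators $\partial_i$ and $P_m$ commute.} For $u\in W^{1,2}(\gamma^n)$, I would show
\[
 P_{m-1}(\partial_iu)=\partial_i(P_mu).
\]
For each multi-index $\beta$ with $|\beta|=m-1$, integrate by parts against the Gaussian density using the adjoint relation $\partial_i^*=x_i-\partial_i$ and $(x_i-\partial_i)\mathrm{He}_\beta=\mathrm{He}_{\beta+e_i}$. This gives
\[
 \inner{\partial_iu}{\mathrm{He}_\beta}=\inner{u}{\mathrm{He}_{\beta+e_i}},
\]
which is exactly the $\beta$-coefficient of $\partial_iP_mu$.

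The integration by parts must be justified for a weak derivative against a polynomial times $e^{-|x|^2/2}$, which is not compactly supported. To do this, multiply by a cutoff $\chi_R$ and let $R\to\infty$. The error term involves $u\,\mathrm{He}_\beta e^{-|x|^2/2}\nabla\chi_R$, which tends to zero in $L^1(dx)$ because $u,\mathrm{He}_\beta\in L^2(\gamma^n)$. I expect this step to be the main technical obstacle.

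\emph{Step 3: conclusion.} Parseval \eqref{eq:chaos-Parseval} applied to $\partial_iu\in L^2$, combined with Step 2, gives
\[
 \norm{\partial_iu}_2^2=\sum_m\norm{\partial_iP_mu}_2^2 .
\]
Summing over $i$ and using the polynomial computation of Step 1 on each finite-dimensional $P_mu$ yields
\[
 \int\norm{\nabla u}_2^2\dd\gamma^n=\sum_m\sum_i\norm{\partial_iP_mu}_2^2=\sum_m m\norm{P_mu}_2^2 .
\]
Here we use $\sum_i\norm{\partial_i h}_2^2=m\norm h_2^2$ for $h\in\mathcal H_m$, which is Step 1. This proves \eqref{eq:chaos-energy}, and polarization then gives \eqref{eq:chaos-energy-polarized}.
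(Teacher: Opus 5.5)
Your proof is correct, but it takes a different route from the paper. The paper identifies $\nabla$ and $P_m$ with the Malliavin derivative and the chaos projections of the coordinate Gaussian family, quotes Nualart's Proposition~1.2.2 for \eqref{eq:chaos-energy}, and then polarizes.

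You instead argue from scratch. The Hermite identities $\partial_i\mathrm{He}_\alpha=\alpha_i\mathrm{He}_{\alpha-e_i}$, $(x_i-\partial_i)\mathrm{He}_\beta=\mathrm{He}_{\beta+e_i}$ and $\norm{\mathrm{He}_\alpha}_2^2=\alpha!$ give $\sum_i\norm{\partial_ih}_2^2=m\norm{h}_2^2$ on $\mathcal H_m$. Gaussian integration by parts against $\mathrm{He}_\beta$ gives the commutation $P_{m-1}(\partial_iu)=\partial_i(P_mu)$, and Parseval applied to $\partial_iu$ finishes.

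What your route buys is that it works directly with the paper's weak-derivative definition of $W^{1,2}(\gamma^n)$. The citation tacitly needs the identification of $\mathbb D^{1,2}$ (a closure of smooth cylindrical functionals) with that space, which the paper asserts without proof. Your commutation identity makes any density argument unnecessary.

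In that light, your opening sentence (``prove it on polynomials, then extend by approximation'') misdescribes your own proof: Steps~2--3 are a direct argument with no approximation at all. Also, the cutoff in Step~2 is routine rather than the main obstacle, since the error term is bounded by $CR^{-1}\norm{u}_2\norm{\mathrm{He}_\beta}_2$.

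The citation, in exchange, is a one-liner. The Malliavin framework also gives the converse (finiteness of $\sum_m m\norm{P_mu}_2^2$ forces Sobolev regularity), which this lemma does not need.
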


\begin{proof}
For the coordinate Gaussian family on $(\R^n,\gamma^n)$, the Malliavin derivative and the chaos projections in \cite[Proposition~1.2.2]{Nualart} identify with $\nabla$ and $P_m$, respectively. Thus the first identity follows from that proposition, and the second identity follows by polarization.
\end{proof}

For an even function with zero mean, the first possible nonzero Hermite component has degree two. Orthogonality to centered quadratic forms raises this degree to four.

\begin{lemma}\label{lem:gap-hierarchy}
Let $u\in W^{1,2}(\gamma^n)$. Then the following hold.
\begin{enumerate}[label=\textup{(\arabic*)},leftmargin=2.5em]
\item If $u$ is even and $\mathbb Eu=0$, then
\begin{equation*}
 2\norm{u}_2^2\le\int\norm{\nabla u}_2^2\dd\gamma^n.
\end{equation*}
\item If $u$ is even, $\mathbb Eu=0$, and $u\perp\mathcal Q_n$, then
\begin{equation*}
 4\norm{u}_2^2\le\int\norm{\nabla u}_2^2\dd\gamma^n.
\end{equation*}
\end{enumerate}
\end{lemma}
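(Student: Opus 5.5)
The plan is to read off both inequalities from the chaos expansion of $u$ together with the energy identity \eqref{eq:chaos-energy} of \Cref{lem:chaos-energy}. Since $\norm{u}_2^2=\sum_m\norm{P_mu}_2^2$ by \eqref{eq:chaos-Parseval}, it suffices to show that the hypotheses kill the chaos components of low degree. Concretely, I will show that $P_mu=0$ for all $m<2$ in case (1), and for all $m<4$ in case (2). Then
\[
 \int\norm{\nabla u}_2^2\dd\gamma^n=\sum_{m\ge m_0}m\norm{P_mu}_2^2\ge m_0\sum_{m\ge m_0}\norm{P_mu}_2^2=m_0\norm{u}_2^2,
\]
with $m_0=2$ in case (1) and $m_0=4$ in case (2).

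The first step is to show that odd chaoses vanish for even $u$. Let $R(x)\coloneqq-x$. This map preserves $\gamma^n$, so $u\mapsto u\circ R$ is an isometry of $L^2(\gamma^n)$. By the parity relation \eqref{eq:Hermite-parity}, it acts on $\mathcal H_m$ as multiplication by $(-1)^m$. Hence it commutes with every projection $P_m$, and
\[
 P_m(u\circ R)=(P_mu)\circ R=(-1)^mP_mu .
\]
If $u$ is even, then $u\circ R=u$, so $P_mu=(-1)^mP_mu$, and therefore $P_mu=0$ for every odd $m$.

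The second step handles the degree-zero component: $P_0u=\mathbb Eu\cdot 1$, so the condition $\mathbb Eu=0$ gives $P_0u=0$. This already proves (1).

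For (2), the remaining step is the degree-two component. By \Cref{lem:quad-trace} we have $\mathcal H_2=\mathcal Q_n$, so $u\perp\mathcal Q_n$ means exactly $P_2u=0$. Combined with the first two steps, $P_mu=0$ for $m=0,1,2,3$, which gives (2).

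I expect no real obstacle. The only point that needs care is that $u\circ R$ commutes with the orthogonal projections $P_m$. This holds because the map $u\mapsto u\circ R$ is unitary and leaves each $\mathcal H_m$ invariant, so it also leaves each orthogonal complement $\mathcal H_m^\perp$ invariant. Alternatively, one can expand $u$ in the Hermite basis and compare coefficients using \eqref{eq:Hermite-parity}.
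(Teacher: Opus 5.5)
Your proposal is correct and follows essentially the same route as the paper: the energy identity \eqref{eq:chaos-energy} reduces both parts to showing $P_mu=0$ for $m<2$ (resp.\ $m<4$), where the odd degrees are removed by the parity relation \eqref{eq:Hermite-parity}, $P_0u$ by the zero mean, and $P_2u$ by $\mathcal H_2=\mathcal Q_n$. The only cosmetic difference is in the parity step: you phrase it as commutation of $u\mapsto u\circ R$ with $P_m$, whereas the paper checks directly that $\langle u,h\rangle_{L^2(\gamma^n)}=-\langle u,h\rangle_{L^2(\gamma^n)}$ for every $h\in\mathcal H_m$ with $m$ odd.
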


\begin{proof}
Suppose that, for some $k\ge1$, $P_mu=0$ for every $0\le m<k$. Applying \eqref{eq:chaos-energy} and \eqref{eq:chaos-Parseval} to this $u$ gives
\begin{align}
 \int_{\R^n}\norm{\nabla u}_2^2\dd\gamma^n
 &=\sum_{m=k}^\infty m\norm{P_mu}_2^2\\
 &\ge k\sum_{m=k}^\infty\norm{P_mu}_2^2
 =k\norm{u}_2^2.                                                  \label{eq:chaos-gap-general}
\end{align}
It is therefore enough in each case to verify that the assumptions eliminate the components of degree less than $k$.

If $u$ is even, then for every odd $m$ and every $h\in\mathcal H_m$, the change of variables $x\mapsto-x$ and \eqref{eq:Hermite-parity} give
\[
 \inner{u}{h}_{L^2(\gamma^n)}
 =\int_{\R^n}u(-x)h(-x)\dd\gamma^n(x)
 =-\inner{u}{h}_{L^2(\gamma^n)}=0.
\]
Thus $P_mu=0$ for every odd $m$. If also $\mathbb Eu=0$, then $P_0u=0$, so all components of degree less than $2$ vanish. Taking $k=2$ in \eqref{eq:chaos-gap-general} proves part~(1).

Finally, if $u$ is even, its odd-degree components vanish. The zero-mean condition gives $P_0u=0$, and $\mathcal H_2=\mathcal Q_n$ in \Cref{lem:quad-trace} shows that $u\perp\mathcal Q_n$ is equivalent to $P_2u=0$. Therefore, $P_0u=P_1u=P_2u=P_3u=0$. Applying \eqref{eq:chaos-gap-general} with $k=4$ proves the assertion.
\end{proof}

\begin{lemma}\label{lem:Lip-Sobolev}
If $u\colon\R^n\to\R$ is $L$-Lipschitz for a constant $L \ge 0$, then $u\in W^{1,2}(\gamma^n)$ and
\[
 \norm{\nabla u(x)}_2\le L\qquad\text{for almost every }x\in\R^n.
\]
Moreover, $u^2\in W^{1,2}(\gamma^n)$ and
\begin{equation*}
 \nabla(u^2)=2u\nabla u\qquad\gamma^n\text{-almost everywhere}.
\end{equation*}
\end{lemma}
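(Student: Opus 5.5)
The argument rests on Rademacher's theorem together with a standard approximation argument. First, since $u$ is $L$-Lipschitz, $|u(x)|\le|u(0)|+L\norm{x}_2$. Gaussian measures have all moments, so $u\in L^p(\gamma^n)$ for every $p<\infty$; in particular $u,u^2\in L^2(\gamma^n)$.

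Next we identify the weak gradient. By Rademacher's theorem, $u$ is differentiable Lebesgue-almost everywhere. At each point of differentiability the classical gradient satisfies $\norm{\nabla u(x)}_2\le L$: take the difference quotient in the direction of $\nabla u(x)$ and use the Lipschitz bound. We then show that the classical a.e. gradient is the weak gradient. For $\varphi\in C_c^\infty(\R^n)$, the difference quotients $(u(x+he_i)-u(x))/h$ are bounded by $L$ and converge a.e. to $\partial_iu$. Moving the difference quotient onto $\varphi$ (a discrete integration by parts) and applying dominated convergence on the compact support gives
\[
 \int u\,\partial_i\varphi\dd x=-\int(\partial_iu)\varphi\dd x.
\]
This is also the fact already quoted in the paper that weak and classical derivatives agree for locally Lipschitz functions. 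Hence $\partial_iu\in L^\infty\subset L^2(\gamma^n)$, and $u\in W^{1,2}(\gamma^n)$ with $\norm{\nabla u}_2\le L$ a.e.

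For $u^2$, note that $u^2$ is locally Lipschitz: on the ball $B_R(0)$ its Lipschitz constant is at most $2L\sup_{B_R}|u|$. The same Rademacher and difference-quotient argument, now run on the compact support of $\varphi$, shows that the weak derivative of $u^2$ equals its classical a.e. derivative. At every point where $u$ is differentiable, the product rule gives $\nabla(u^2)=2u\nabla u$, so this identity holds a.e. For integrability, $|2u\,\partial_iu|\le2L(|u(0)|+L\norm{x}_2)$, which lies in $L^2(\gamma^n)$ because $\gamma^n$ has finite second moments. Together with $u^2\in L^2(\gamma^n)$ from the first step, this gives $u^2\in W^{1,2}(\gamma^n)$.

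The only delicate point is the identification of the weak derivative with the a.e. classical one for a function that is merely locally Lipschitz. The difference-quotient argument on compact supports handles this, since the test functions have compact support and the quotients are locally uniformly bounded. The Gaussian weight enters only through the integrability checks, which reduce to polynomial growth.
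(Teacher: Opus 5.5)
Your proof is correct and follows essentially the same route as the paper: linear growth of $u$ gives the needed Gaussian integrability (the paper records $u\in L^4(\gamma^n)$), and the standard facts that (locally) Lipschitz functions are Sobolev with weak gradient equal to the Rademacher gradient, together with the chain/product rule, give the rest. The paper simply cites Evans--Gariepy for these facts. You instead verify them directly, via the difference-quotient integration by parts and the pointwise product rule at points of differentiability, which is a faithful unpacking of the same argument.
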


\begin{proof}
Since $u$ has at most linear growth, $u\in L^4(\gamma^n)$. The assertion
then follows from the standard Sobolev regularity of Lipschitz functions
and the Sobolev chain rule \cite[Chapter~4]{EvansGariepy}.
\end{proof}

\subsection{\texorpdfstring{$\chi$}{chi} distributions and radial maps on quotients}

\begin{definition}[$\chi$ distribution]\label{def:chi}
Let $n\in\N$. Using the \emph{radial map}
\[
 \rho_n\colon\R^n\to[0,\infty),\qquad \rho_n(x)\coloneqq\norm{x}_2,
\]
define
\begin{equation*}
 \chi_n\coloneqq(\rho_n)_*\gamma^n.
\end{equation*}
This is called the \emph{$\chi$ distribution} with $n$ degrees of freedom.
Equivalently, if $\xi_1,\dots,\xi_n$ are independent standard normal variables, then the distribution of $\sqrt{\xi_1^2+\cdots+\xi_n^2}$ is $\chi_n$.
\end{definition}

For $a>0$, let
\[
 \Gamma(a)\coloneqq\int_0^\infty u^{a-1}e^{-u}\dd u
\]
denote \emph{Euler's Gamma function}.

The following facts are well known. See, for example, \cite[Chap.~18]{JohnsonKotzBalakrishnan}.

\begin{lemma}\label{lem:chi}
Let $V=(V_1,\dots,V_n)\sim\gamma^n$ and $S_n\coloneqq\norm V_2$.
The distribution of $S_n$ is $\chi_n$, and it has density
\begin{equation*}
 f_n(r)=\frac1{2^{n/2-1}\Gamma(n/2)}r^{n-1}e^{-r^2/2}
 \qquad(r>0)
\end{equation*}
on $[0,\infty)$. Moreover,
\begin{equation*}
 \mathbb ES_n^2=n,
 \qquad
 \Var(S_n^2)=2n.
\end{equation*}
In particular, for $r\ge0$,
\begin{align}
 \mathbb P(S_1\le r)&=2\Phi(r)-1,                                 \label{eq:half-normal-cdf}\\
 \mathbb P(S_2\le r)&=1-e^{-r^2/2}.                               \label{eq:rayleigh-cdf}
\end{align}
\end{lemma}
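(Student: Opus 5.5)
The statement to prove is \Cref{lem:chi}. The plan is to compute everything directly from the product structure of $\gamma^n$ together with polar coordinates.

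For the density, we will integrate the Gaussian density $(2\pi)^{-n/2}e^{-\norm{x}_2^2/2}$ over the shell $\{r<\norm{x}_2\le r+dr\}$. The surface area of $S^{n-1}(r)$ is $\omega_{n-1}r^{n-1}$, where $\omega_{n-1}=2\pi^{n/2}/\Gamma(n/2)$. This gives the density $(2\pi)^{-n/2}\omega_{n-1}r^{n-1}e^{-r^2/2}$, which simplifies to the stated $f_n$. To avoid quoting the sphere area, we can instead fix the normalizing constant $c_n$ in $c_nr^{n-1}e^{-r^2/2}$ by requiring total mass one. The substitution $u=r^2/2$ gives
\[
\int_0^\infty r^{n-1}e^{-r^2/2}\dd r=2^{n/2-1}\Gamma(n/2),
\]
which yields $c_n$ at once. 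The shape $r^{n-1}e^{-r^2/2}$ itself comes from rotation invariance of $\gamma^n$ and the polar-coordinate formula $\dd x=r^{n-1}\dd r\dd\sigma$.

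For the moments, write $S_n^2=\sum_i V_i^2$ with the $V_i$ independent. \Cref{lem:normal-mom} gives $\mathbb EV_i^2=1$ and $\Var(V_i^2)=3-1=2$. Summing over $i$ gives $\mathbb ES_n^2=n$, and independence gives $\Var(S_n^2)=2n$.

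For the special cases, take $n=1$: $\mathbb P(\abs{V_1}\le r)=\Phi(r)-\Phi(-r)=2\Phi(r)-1$. Take $n=2$: $f_2(r)=re^{-r^2/2}$, and integrating from $0$ to $r$ gives $1-e^{-r^2/2}$.

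No step is a real obstacle here. The only point needing care is the polar-coordinate justification of the density, and the normalization argument above handles it cleanly.
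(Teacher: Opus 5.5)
Your proof is correct. It differs from the paper only in that the paper gives no argument at all: it records these as standard facts about the $\chi$ distribution and cites Johnson--Kotz--Balakrishnan.

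Your derivation makes the lemma self-contained, using only the polar-coordinate formula and \Cref{lem:normal-mom}, which the paper already provides. The normalization step is a good choice. It fixes the constant through $\int_0^\infty r^{n-1}e^{-r^2/2}\dd r=2^{n/2-1}\Gamma(n/2)$ without needing the area of $S^{n-1}$, and comparing constants in fact recovers that area.

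Two small remarks. First, the identification of the law of $S_n$ with $\chi_n$ is immediate from \Cref{def:chi}, since $S_n=\rho_n(V)$; you should state this explicitly. Second, you do not need rotation invariance to get the profile $r^{n-1}e^{-r^2/2}$. The Gaussian density is already a function of $\norm{x}_2$, so polar coordinates alone suffice.

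It is also worth seeing what the paper actually uses downstream. \Cref{prop:finite-C} uses the moment identities, and \Cref{prop:Theta-quotients} uses the two distribution functions. These need only independence, \Cref{lem:normal-mom}, and a two-dimensional polar computation. So the general density formula is the only part for which the citation really saves work.
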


\begin{lemma}\label{lem:radial-quotient}
Let $m\ge1$. Since the radial functions are invariant under the group actions, they define functions on the quotients by
\[
 \bar\rho_m^{\R}\colon\R^m/\Z_2\to[0,\infty),\quad \bar\rho_m^{\R}([x])=\norm x_2,
\]
\[
 \bar\rho_m^{\C}\colon\C^m/U(1)\to[0,\infty),\quad \bar\rho_m^{\C}([z])=\norm z_2.
\]
These functions are $1$-Lipschitz. If $\pi_{\Z_2}\colon\R^m\to\R^m/\Z_2$ and $\pi_{U(1)}\colon\C^m\to\C^m/U(1)$ are the quotient maps, then $\bar\rho_m^{\R}\circ\pi_{\Z_2}=\rho_m$ and $\bar\rho_m^{\C}\circ\pi_{U(1)}=\rho_{2m}$, and
\begin{align*}
 (\bar\rho_m^{\R})_*\mu_{\Gamma^m/\Z_2}&=\chi_m,\\
 (\bar\rho_m^{\C})_*\mu_{\Gamma^m_{\C}/U(1)}&=\chi_{2m}.
\end{align*}
\end{lemma}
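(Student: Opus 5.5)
The statement to prove is \Cref{lem:radial-quotient}: the induced radial functions are well defined and $1$-Lipschitz, they factor through the quotient maps as stated, and their push-forwards are the $\chi$ distributions. The plan is to deduce everything from \Cref{lem:quotient} together with the definition of $\chi_n$ in \Cref{def:chi}.

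First, invariance. For $\varepsilon\in\{\pm1\}$ we have $\norm{\varepsilon x}_2=\norm x_2$. For $\theta\in\R$ we have $\norm{e^{i\theta}z}_2=\norm z_2$, because multiplying each complex coordinate by the unit complex number $e^{i\theta}$ preserves its modulus. Under the identification $\C^m=\R^{2m}$, the Hermitian norm $\norm z_2$ is exactly the Euclidean norm of the real vector in $\R^{2m}$, so $\rho_{2m}$ is precisely the norm on $\C^m$. The reverse triangle inequality shows that $\rho_m$ and $\rho_{2m}$ are $1$-Lipschitz. The groups $\Z_2$ and $U(1)$ are compact and act by isometries that preserve $\gamma^m$ and $\gamma^{2m}$ respectively, by the rotation invariance of the standard Gaussian.

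By the converse part of \Cref{lem:quotient}, there are unique $1$-Lipschitz functions $\bar\rho_m^{\R}$ and $\bar\rho_m^{\C}$ with $\bar\rho_m^{\R}\circ\pi_{\Z_2}=\rho_m$ and $\bar\rho_m^{\C}\circ\pi_{U(1)}=\rho_{2m}$. These coincide with the formulas in the statement, since those formulas are independent of the representative.

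For the distributions, use the first part of \Cref{lem:quotient}:
\[
(\bar\rho_m^{\R})_*\mu_{\Gamma^m/\Z_2}=(\bar\rho_m^{\R}\circ\pi_{\Z_2})_*\gamma^m=(\rho_m)_*\gamma^m=\chi_m .
\]
Similarly,
\[
(\bar\rho_m^{\C})_*\mu_{\Gamma^m_\C/U(1)}=(\rho_{2m})_*\gamma^{2m}=\chi_{2m},
\]
using that the measure of $\Gamma^m_\C$ is $\gamma^{2m}$ under the identification above.

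There is no real obstacle. The only point needing care is the bookkeeping identification $\C^m\cong\R^{2m}$, under which both the metric and the measure become the Euclidean ones, so that $\rho_{2m}$ is the correct radial map.
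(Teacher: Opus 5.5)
Your proposal is correct and follows the paper's own proof: both note that the norm is $1$-Lipschitz and invariant under the $\Z_2$- and $U(1)$-actions, then apply \Cref{lem:quotient} to $(\R^m,\Z_2)$ and $(\C^m,U(1))$. You also spell out the bookkeeping (Gaussian invariance of the actions, the identification $\C^m\cong\R^{2m}$, and the pushforward computation via \Cref{def:chi}) that the paper leaves implicit.
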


\begin{proof}
The norm functions $x\mapsto\norm x_2$ and $z\mapsto\norm z_2$ are $1$-Lipschitz and invariant under the $\Z_2$-action and the $U(1)$-action, respectively. Applying \Cref{lem:quotient} to $(\R^m,\Z_2)$ and $(\C^m,U(1))$ proves the assertion.
\end{proof}

\section{Poincar\'e constants of the Hopf quotients}\label{sec:EKM}

This section transfers the finite-dimensional computation of $C_{2,2}$ to the two Hopf-quotient pyramids by box lower semicontinuity.

The standard extension of $C_{2,2}$ to pyramids is monotone and lower semicontinuous with respect to weak convergence of pyramids \cite{EKM}. Hence the following lemma is a special case of \cite[Lemma~3.2]{EKM}.

\begin{lemma}[Esaki--Kazukawa--Mitsuishi] \label{lem:C-closure}
Let $\{Y_n\}$ be a sequence of mm-spaces, and let $\mathcal P$ be a pyramid.
Suppose that $\mathcal P_{Y_n}\subset\mathcal P$ for every $n$ and that
$\{\mathcal P_{Y_n}\}$ converges weakly to $\mathcal P$ as $n\to\infty$.
If $C_{2,2}(Y_n)=C_0$ for every $n$ for a constant $C_0$, then
\[
 C_{2,2}(\mathcal P)=C_0.
\]
\end{lemma}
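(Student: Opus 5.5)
The plan is to prove the two inequalities $C_{2,2}(\mathcal P)\ge C_0$ and $C_{2,2}(\mathcal P)\le C_0$ separately.

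\emph{Lower bound.} The inequality $C_{2,2}(\mathcal P)\ge C_0$ follows directly from the definition $C_{2,2}(\mathcal P)=\sup_{X\in\mathcal P}C_{2,2}(X)$. Since $\mathcal P_{Y_n}\subset\mathcal P$, each $Y_n$ belongs to $\mathcal P$. Hence
\[
 C_{2,2}(\mathcal P)\ge C_{2,2}(Y_n)=C_0 .
\]

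\emph{Upper bound: reduction to members of $\mathcal P_{Y_n}$.} We must show $C_{2,2}(X)\le C_0$ for every $X\in\mathcal P$. First consider $X\in\mathcal P_{Y_n}$, so that $X\prec Y_n$ via a measure-preserving $1$-Lipschitz map $p\colon Y_n\to X$. Take $f\in\Lipb(X)$. Then $f\circ p\in\Lipb(Y_n)$ and $\Var_{Y_n}(f\circ p)=\Var_X(f)$. The key point is a comparison of energies,
\[
 \int_{Y_n}\lipa(f\circ p)^2\dd\mu_{Y_n}\le\int_X\lipa(f)^2\dd\mu_X .
\]
This holds because $p$ is $1$-Lipschitz: $p(B_r(y))\subset B_r(p(y))$, which gives the pointwise bound $\lipa(f\circ p)(y)\le\lipa(f)(p(y))$. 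Integrating and using $p_*\mu_{Y_n}=\mu_X$ yields the comparison. Therefore $C_{2,2}(X)\le C_{2,2}(Y_n)=C_0$. In other words, $C_{2,2}$ is monotone under the Lipschitz order.

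\emph{Upper bound: passage to the box closure.} Now let $X\in\mathcal P$ be arbitrary. Weak convergence gives $\Box(X,\mathcal P_{Y_n})\to0$, so there are $X_n\in\mathcal P_{Y_n}$ with $X_n\to X$ in the box distance, and each satisfies $C_{2,2}(X_n)\le C_0$. It remains to show that the set $\{X : C_{2,2}(X)\le C_0\}$ is box-closed, i.e.\ that $C_{2,2}$ is lower semicontinuous under box convergence. This is the main obstacle, because $\lipa$ is a local quantity that is not stable under box approximation. I would handle it in three steps.
\begin{enumerate}
\item Reduce, by truncation and a density argument, to $f\in\Lipb(X)$ for which the energy $\int\lipa(f)^2\dd\mu_X$ can be approximated by functions that are locally Lipschitz with controlled slope. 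A convenient choice is the McShane-type regularization $f_r(x)=\inf_y\{f(y)+L_r(y)\,d(x,y)\}$, built from local Lipschitz constants on $r$-balls.
\item Transport $f$ to $X_n$ through the almost-isometric parameters that realize the box distance, and extend it by McShane extension.
\item Show that the variance converges and that the energy on $X_n$ is asymptotically bounded by the energy on $X$ plus an error that vanishes as $r\downarrow0$.
\end{enumerate}
Rather than redo this, I would invoke \cite[Lemma~3.2]{EKM}, where this lower semicontinuity is proved. Given it, $C_{2,2}(X)\le\liminf_n C_{2,2}(X_n)\le C_0$, which completes the upper bound and proves $C_{2,2}(\mathcal P)=C_0$.
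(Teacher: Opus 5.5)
Your proposal is correct and follows essentially the same route as the paper, which derives the lemma from the monotonicity of $C_{2,2}$ under the Lipschitz order together with its lower semicontinuity, citing \cite[Lemma~3.2]{EKM} for the latter. You prove the monotonicity directly and cite the lower semicontinuity in its box-convergence form for mm-spaces; this is equivalent to the weak-convergence form for pyramids that the paper invokes, because box convergence $X_n\to X$ gives weak convergence $\mathcal P_{X_n}\to\mathcal P_X$, and your monotonicity argument gives $C_{2,2}(\mathcal P_X)=C_{2,2}(X)$.
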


For $R>0$, let
\[
 \tau_R(t)\coloneqq\max\{-R,\min\{t,R\}\}
\]
be the \emph{truncation function}. It is $1$-Lipschitz.

\begin{proposition}\label{prop:finite-C}
For every $n\ge1$,
\begin{equation*}
 C_{2,2}(\Gamma^n/\Z_2)=\frac1{\sqrt2},
 \qquad
 C_{2,2}(\Gamma^n_{\C}/U(1))=\frac1{\sqrt2}.
\end{equation*}
\end{proposition}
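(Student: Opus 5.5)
We prove both equalities by establishing the upper bound $C_{2,2}\le 1/\sqrt2$ through invariant functions on $\R^n$ or $\C^n=\R^{2n}$, and the lower bound through an explicit test function.

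\emph{Upper bound.} Let $X=\Gamma^n/\Z_2$ or $\Gamma^n_{\C}/U(1)$, and let $\bar f\in\Lipb(X)$. Put $u\coloneqq\bar f\circ\pi_G$. By \Cref{lem:quotient}, $u$ is a bounded Lipschitz $G$-invariant function on $\R^N$ ($N=n$ or $2n$) with $\Var_{\gamma^N}(u)=\Var_X(\bar f)$. In both cases $-1\in G$ (for $U(1)$ take $e^{i\pi}$), so $u$ is even. Its centered version $u-\mathbb Eu$ therefore satisfies \Cref{lem:gap-hierarchy}(1), which gives
\[
 \Var(u)\le\tfrac12\int\norm{\nabla u}_2^2\dd\gamma^N .
\]
It remains to compare gradients, i.e.\ to show $\norm{\nabla u(x)}_2\le\lipa(\bar f)(\pi_G(x))$ for a.e.\ $x$. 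Fix $r>0$ and let $L_r\coloneqq\Lip(\bar f|_{B_r([x])})$. For $y$ with $\norm{x-y}_2<r$ we have $d_X([x],[y])\le\norm{x-y}_2<r$, so $\abs{u(x)-u(y)}\le L_r\, d_X([x],[y])\le L_r\norm{x-y}_2$. At points of differentiability (Rademacher, \Cref{lem:Lip-Sobolev}) this yields $\norm{\nabla u(x)}_2\le L_r$, and letting $r\downarrow0$ gives the claim. Integrating against $\gamma^N$ and using $(\pi_G)_*\gamma^N=\mu_X$ gives $\Var_X(\bar f)\le\frac12\int\lipa(\bar f)^2\dd\mu_X$, hence $C_{2,2}(X)\le1/\sqrt2$.

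\emph{Lower bound.} The natural test function is $\bar\rho^2$ from \Cref{lem:radial-quotient}, but it is unbounded, so we truncate it. For the $\Z_2$ case, $\bar g([x])\coloneqq\norm{x}_2^2$ has $\Var=2n$ by \Cref{lem:chi}. We will show $\lipa(\bar g)([x])\le 2\norm x_2$, giving $\int\lipa(\bar g)^2\dd\mu_X\le4n$ and ratio $2n/4n=1/2$. To get the local estimate, note that near $[x]$ the quotient distance $d_X([x],[y])=\min\{\norm{x-y},\norm{x+y}\}$ and $\bar g([y])-\bar g([x])=\norm y^2-\norm x^2$. This difference is unchanged if $y$ is replaced by $-y$, so we may choose the representative realizing the minimum, and then $\abs{\norm y^2-\norm x^2}\le(\norm x+\norm y)\norm{x-y}$. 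Applying this to any two points of $B_r([x])$ with optimal representatives gives local Lipschitz constant $\le2\norm x_2+O(r)$. The same computation works for $U(1)$, with $\Var=4n$ and $\int 4\norm z^2=8n$.

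For boundedness we use $\tau_R\circ\bar g$, which has the same local Lipschitz bound on $\{\norm x^2<R\}$ and zero slope on the interior of the complement. As $R\to\infty$, the variance converges to $2n$ (respectively $4n$) and the energy to at most $4n$ (respectively $8n$), by dominated convergence since all moments are finite. Since the ratio tends to $1/2$, $C_{2,2}\ge1/\sqrt2$. Alternatively, any $C<1/\sqrt2$ fails for large $R$.

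The main obstacle is the careful comparison between $\lipa$ on the quotient and the Euclidean gradient upstairs. For the upper bound this is the inequality $\norm{\nabla u}\le\lipa\bar f\circ\pi$; for the lower bound it is the pointwise estimate of $\lipa$ of $\norm\cdot^2$ on the quotient, including behaviour at the orbit of $0$ and at the truncation level, which are null sets or handled by the convention at the boundary.
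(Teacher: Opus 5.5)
Your proposal is correct and follows the paper's proof. The upper bound uses the even lift and the degree-two spectral gap of \Cref{lem:gap-hierarchy}(1), together with $\norm{\nabla u}_2\le\lipa(\bar f)\circ\pi_G$ a.e. The lower bound uses a truncated squared radius with $\lipa\le 2\norm{x}_2$, giving the ratio $2n/4n$ (resp.\ $4n/8n$).

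The differences are cosmetic: you truncate $\norm{x}_2^2$ rather than the centered $\norm{x}_2^2-n$, and you prove the gradient comparison directly at Rademacher points rather than passing through $\lipa(u)$. Neither affects the argument.
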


\begin{proof}
Consider the real quotient. Take $\bar f\in\Lipb(\Gamma^n/\Z_2)$ and lift it to $f=\bar f\circ\pi_{\Z_2}$. By \Cref{lem:quotient}, $f$ is bounded, Lipschitz, and even. Moreover, $\pi_{\Z_2}(B_r(x))\subset B_r([x])$, and \Cref{lem:quotient} also gives $d_{\Gamma^n/\Z_2}(\pi_{\Z_2}(u),\pi_{\Z_2}(v))\le d_{\R^n}(u,v)$. Therefore,
\begin{equation*}
 \lipa(f)(x)\le\lipa(\bar f)([x]).
\end{equation*}
This inequality is sufficient for the Poincar\'e upper bound. By \Cref{lem:Lip-Sobolev}, $f\in W^{1,2}(\gamma^n)$. The function $f-\int f\dd\gamma^n$ is even and has zero mean, so part~(1) of \Cref{lem:gap-hierarchy} gives
\[
 \Var_{\gamma^n}(f)\le\frac12\int_{\R^n}\norm{\nabla f}_2^2\dd\gamma^n.
\]
Using also the almost-everywhere inequality $\norm{\nabla f}_2\le\lipa(f)$ and the definition of the quotient measure, we obtain
\begin{align*}
 \Var_{\Gamma^n/\Z_2}(\bar f)
 &=\Var_{\gamma^n}(f)\\
 &\le\frac12\int_{\R^n}\norm{\nabla f}_2^2\dd\gamma^n
 \le\frac12\int_{\R^n}\lipa(f)^2\dd\gamma^n\\
 &\le\frac12\int_{\Gamma^n/\Z_2}\lipa(\bar f)^2\dd\mu_{\Gamma^n/\Z_2}.
\end{align*}
Thus $C_{2,2}(\Gamma^n/\Z_2)\le1/\sqrt2$.

For the reverse inequality, set $h_R([x])\coloneqq\tau_R(\norm x_2^2-n)$. By \Cref{lem:radial-quotient}, $h_R\in\Lipb(\Gamma^n/\Z_2)$ and, almost everywhere,
\[
 \lipa(h_R)([x])
 \le2\norm x_2\,\ind_{\{\abs{\norm x_2^2-n}<R\}}.
\]
If $V\sim\gamma^n$, then \Cref{lem:chi} and the dominated convergence theorem give
\begin{align*}
 \Var_{\Gamma^n/\Z_2}(h_R)&\longrightarrow\Var(\norm V_2^2)=2n\quad(R\to\infty),\\
 \int\lipa(h_R)^2\dd\mu_{\Gamma^n/\Z_2}
 &\le\mathbb E\left[4\norm V_2^2
    \ind_{\{\abs{\norm V_2^2-n}<R\}}\right]
 \longrightarrow4n\quad(R\to\infty).
\end{align*}
Applying \eqref{eq:C22-def} to $h_R$ and letting $R\to\infty$ gives
$C_{2,2}(\Gamma^n/\Z_2)^2\ge1/2$.

The complex quotient is analogous. A $U(1)$-invariant lift is even because $-1\in U(1)$, so the preceding argument gives the upper bound $1/\sqrt2$. Under our normalization, $\Gamma^n_{\C}$ has $2n$ real standard normal coordinates, so the mean squared radius is $2n$. For the lower bound, set $h_R^{\C}([z])\coloneqq\tau_R(\norm z_2^2-2n)$ and let $Z\sim\gamma^{2n}$. The same estimate as above shows that, as $R\to\infty$, the variance of $h_R^{\C}$ converges to $\Var(\norm Z_2^2)=4n$, whereas the integral of $\lipa(h_R^{\C})^2$ is bounded above by a quantity converging to $\int4\norm Z_2^2\dd\gamma^{2n}=8n$. Thus \eqref{eq:C22-def} again gives $C_{2,2}(\Gamma^n_{\C}/U(1))^2\ge1/2$.
\end{proof}

\begin{proof}[Proof of \Cref{thm:C22}]
The associated pyramids generated by $\Gamma^n/\Z_2$ and
$\Gamma^n_{\C}/U(1)$ form increasing sequences, and the two quotient
pyramids are the weak limits of the corresponding increasing sequences.
By \Cref{prop:finite-C}, the constants along both sequences are
equal to $1/\sqrt2$.
Applying \Cref{lem:C-closure} to each sequence proves the theorem.
\end{proof}

\section{Separation of the Gaussian pyramid from the two quotients}

This section estimates the separation distances needed to distinguish the Gaussian pyramid from its two Hopf quotients.

\subsection{Separation distance of the Gaussian pyramid}

\begin{proposition} \label{prop:Sep-Gaussian}
For every $\alpha,\beta > 0$ with $\alpha+\beta\le1$,
\begin{equation*}
 \Sep(\Gamma^\infty;\alpha,\beta)
 =\Phi^{-1}(1-\beta)-\Phi^{-1}(\alpha).
\end{equation*}
In particular, for $0<\kappa\le1/2$,
\begin{equation*}
 \Sep(\Gamma^\infty;1/2,\kappa)=\Phi^{-1}(1-\kappa).
\end{equation*}
\end{proposition}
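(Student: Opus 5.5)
We prove the two inequalities separately and then specialize.

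\emph{Upper bound.} First compute $\Sep(\Gamma^n;\alpha',\beta')$ for every finite $n$ and $\alpha'+\beta'\le1$. Take Borel sets $A,B\subset\R^n$ with $\gamma^n(A)\ge\alpha'$ and $\gamma^n(B)\ge\beta'$, and suppose $d(A,B)\ge t>0$. Then $B\cap A_t=\emptyset$, so $\gamma^n(A_t)\le1-\beta'$. \Cref{thm:gauss-iso} gives
\[
\Phi\bigl(\Phi^{-1}(\alpha')+t\bigr)\le1-\beta',
\]
that is, $t\le\Phi^{-1}(1-\beta')-\Phi^{-1}(\alpha')$. Hence $\Sep(\Gamma^n;\alpha',\beta')\le\Phi^{-1}(1-\beta')-\Phi^{-1}(\alpha')$ for every $n$.

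Now take any $X\in\Gamma^\infty$. The task is to bound $\Sep(X;\alpha-\delta,\beta-\delta)$. If $X\prec\Gamma^n$ via a domination map $f$, then pulling back $A,B\subset X$ by $f$ preserves their measures and does not decrease their distance. Therefore $\Sep(X;\cdot,\cdot)\le\Sep(\Gamma^n;\cdot,\cdot)$. For a general $X$ in the box closure, approximate $X$ by $X_k\in\bigcup_n\mathcal P_{\Gamma^n}$ with $\Box(X,X_k)\to0$.

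This transfer step is the delicate one. I would use the standard semicontinuity of separation distance under box convergence: if $\Box(X,Y)<\varepsilon$, then
\[
\Sep(X;\alpha,\beta)\le\Sep(Y;\alpha-\varepsilon,\beta-\varepsilon)+2\varepsilon .
\]
This holds by transporting $A,B$ through the parameters on the set $I_0$. Applying it with $\varepsilon\to0$ gives
\[
\Sep(X;\alpha-\delta,\beta-\delta)\le\Phi^{-1}(1-\beta+2\delta)-\Phi^{-1}(\alpha-2\delta).
\]
The right-hand side tends to the claimed value as $\delta\downarrow0$, since $\Phi^{-1}$ is continuous on $(0,1)$. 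This is also why the $\delta$-relaxation built into the definition of $\Sep(\mathcal P;\cdot,\cdot)$ is harmless.

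\emph{Lower bound.} Already $\Gamma^1\in\Gamma^\infty$. In $\Gamma^1$ take the half-lines
\[
A=(-\infty,\Phi^{-1}(\alpha-\delta)],\qquad B=[\Phi^{-1}(1-\beta+\delta),\infty).
\]
These have measures $\alpha-\delta$ and $\beta-\delta$, and $d(A,B)=\Phi^{-1}(1-\beta+\delta)-\Phi^{-1}(\alpha-\delta)$, which is positive when $\alpha+\beta\le1$. Letting $\delta\downarrow0$ gives the lower bound. Combining the two bounds proves the equality.

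For the special case take $\alpha=1/2$ and $\beta=\kappa\le1/2$. Since $\Phi^{-1}(1/2)=0$, the formula reduces to $\Sep(\Gamma^\infty;1/2,\kappa)=\Phi^{-1}(1-\kappa)$. The main obstacle is the box-closure transfer; if the cited semicontinuity needs checking, I would verify it directly from the parameter definition of $\Box$.
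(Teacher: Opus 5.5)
Your proof is correct, but it passes from finite dimensions to the pyramid differently from the paper.

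The paper first computes $\Sep(\Gamma^n;a,b)=\Phi^{-1}(1-b)-\Phi^{-1}(a)$ exactly for every $n$, using the isoperimetric upper bound together with parallel half-spaces in $\Gamma^n$. It transfers this to $\Sep(\mathcal P_{\Gamma^n};a,b)$ via Ozawa--Shioya's Proposition~4.4. It then applies their limit formula (Theorem~4.9) for $\Sep$ under the weak convergence $\mathcal P_{\Gamma^n}\to\Gamma^\infty$. Because the finite-dimensional value does not depend on $n$, both inequalities follow at once.

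You instead work directly from the $\delta$-relaxed definition of $\Sep(\mathcal P;\cdot,\cdot)$. Your upper bound combines monotonicity under domination (pulling $A,B$ back along the domination map) with a box-semicontinuity estimate that reaches the members of the box closure. Your lower bound uses only the single witness $\Gamma^1\in\Gamma^\infty$, so you never need equality in every dimension.

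What each route buys: yours is self-contained and needs neither the weak convergence $\mathcal P_{\Gamma^n}\to\Gamma^\infty$ nor the cited limit theorem. The paper's is shorter given the citation, and it packages the closure issue into a general statement valid for any weak limit.

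The price of your route is that you must prove the semicontinuity lemma yourself. When you do, note that $\psi(\varphi^{-1}(A)\cap I_0)$ need not be Borel, so replace it by its closure. This preserves both the measure lower bound and the distance to the other set. The additive error can in fact be taken to be $\varepsilon$ rather than $2\varepsilon$. Your displayed bound $\Phi^{-1}(1-\beta+2\delta)-\Phi^{-1}(\alpha-2\delta)$ is valid, though letting $\varepsilon\to0$ actually gives the sharper $\Phi^{-1}(1-\beta+\delta)-\Phi^{-1}(\alpha-\delta)$.
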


\begin{proof}
We first consider a finite-dimensional Gaussian space. Suppose that $\gamma^n(A)\ge\alpha$, $\gamma^n(B)\ge\beta$, and $d_{\R^n}(A,B)\ge r$. For every $0<r_0<r$, we have $A_{r_0}\cap B=\varnothing$, so \Cref{thm:gauss-iso} gives
\[
 \beta\le1-\gamma^n(A_{r_0})
 \le1-\Phi(\Phi^{-1}(\alpha)+r_0).
\]
Therefore,
\[
 r_0\le\Phi^{-1}(1-\beta)-\Phi^{-1}(\alpha).
\]
Letting $r_0\uparrow r$ gives the upper bound. Consider the parallel half-spaces
\[
 A=\{x_1\le\Phi^{-1}(\alpha)\},
 \qquad
 B=\{x_1\ge\Phi^{-1}(1-\beta)\}.
\]
Since the marginal distribution of the first coordinate is $N(0,1)$,
\[
 \gamma^n(A)=\Phi(\Phi^{-1}(\alpha))=\alpha,
 \qquad
 \gamma^n(B)=1-\Phi(\Phi^{-1}(1-\beta))=\beta.
\]
Moreover, $\alpha+\beta\le1$ implies $\Phi^{-1}(\alpha)\le\Phi^{-1}(1-\beta)$. For every $x\in A$ and $y\in B$,
\[
 \norm{x-y}_2\ge y_1-x_1
 \ge\Phi^{-1}(1-\beta)-\Phi^{-1}(\alpha),
\]
and equality holds by taking boundary points of the two half-spaces on the first coordinate axis. Therefore,
\[
 d_{\R^n}(A,B)=\Phi^{-1}(1-\beta)-\Phi^{-1}(\alpha),
\]
so these half-spaces attain equality in the upper bound.
Set
\[
 F(a,b)\coloneqq\Phi^{-1}(1-b)-\Phi^{-1}(a).
\]
For every $n\ge1$ and $a,b>0$ with $a+b\le1$, the preceding calculation and \cite[Proposition~4.4]{OzawaShioya} give
\begin{equation}
 \Sep(\mathcal P_{\Gamma^n};a,b)
 =\Sep(\Gamma^n;a,b)=F(a,b).                                    \label{eq:Sep-associated-Gaussian}
\end{equation}

The limit formula for the separation distance under weak convergence $\mathcal P_n\to\mathcal P$ of pyramids \cite[Theorem~4.9]{OzawaShioya} states that, for every $0 < \kappa_0,\kappa_1 \le 1$,
\[
 \Sep(\mathcal P;\kappa_0,\kappa_1)=\lim_{\varepsilon\to0^+}\liminf_{n\to\infty}\Sep(\mathcal P_n;\kappa_0-\varepsilon,\kappa_1-\varepsilon).
\]
Applying this formula with $\mathcal P_n=\mathcal P_{\Gamma^n}$, $\mathcal P=\Gamma^\infty$, $\kappa_0=\alpha$, and $\kappa_1=\beta$, we obtain
\[
 \Sep(\Gamma^\infty;\alpha,\beta)=\lim_{\varepsilon\to0^+}\liminf_{n\to\infty}\Sep(\mathcal P_{\Gamma^n};\alpha-\varepsilon,\beta-\varepsilon).
\]
For every $0<\varepsilon<\min\{\alpha,\beta\}$, equation \eqref{eq:Sep-associated-Gaussian} gives, for every $n\ge1$,
\[
 \Sep(\mathcal P_{\Gamma^n};\alpha-\varepsilon,\beta-\varepsilon)=F(\alpha-\varepsilon,\beta-\varepsilon).
\]
This expression does not depend on $n$. Taking first the lower limit as $n\to\infty$ and then letting $\varepsilon\downarrow0$, we obtain, by continuity of $\Phi^{-1}$ on $(0,1)$,
\[
 \Sep(\Gamma^\infty;\alpha,\beta)=\lim_{\varepsilon\to0^+}F(\alpha-\varepsilon,\beta-\varepsilon)=F(\alpha,\beta).
\]
This proves the first assertion. The second assertion is the case $\alpha=1/2$.
\end{proof}

The following is a standard consequence of the Mills-ratio bounds \cite{Gordon}.

\begin{lemma}\label{lem:quantile}
\begin{equation*}
 \lim_{\kappa\downarrow0}\frac{\Phi^{-1}(1-\kappa)^2}{2\log(1/\kappa)}=1.
\end{equation*}
\end{lemma}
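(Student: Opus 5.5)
Set $t\coloneqq\Phi^{-1}(1-\kappa)$, so that $\kappa=1-\Phi(t)$ and $t\to\infty$ monotonically as $\kappa\downarrow0$. The claim then becomes
\[
 \lim_{t\to\infty}\frac{t^2}{-2\log(1-\Phi(t))}=1.
\]
The plan is to sandwich $1-\Phi(t)$ between two explicit functions whose logarithms are $-t^2/2+O(\log t)$.

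For the bounds, use the standard Mills-ratio inequalities from \cite{Gordon}: for $t>0$,
\[
 \frac{t}{1+t^2}\varphi(t)\le1-\Phi(t)\le\frac{1}{t}\varphi(t).
\]
The upper bound follows by integrating $\varphi(s)\le (s/t)\varphi(s)$ over $s\ge t$. The lower bound follows by differentiating $(1-\Phi(t))-\tfrac{t}{1+t^2}\varphi(t)$, checking that the derivative is negative, and noting that the expression tends to $0$ at infinity. One could instead use the cruder bound $1-\Phi(t)\le e^{-t^2/2}$, already quoted in \Cref{lem:gauss-conc}, for the upper side. In either case we obtain
\[
 -\log(1-\Phi(t))=\frac{t^2}{2}+\log\sqrt{2\pi}+O(\log t)\qquad(t\to\infty),
\]
and the $O(\log t)$ term is two-sided.

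Dividing by $t^2/2$ shows that $-2\log(1-\Phi(t))/t^2\to1$. Since $\kappa\mapsto t$ is a continuous increasing bijection from $(0,1/2]$ onto $[0,\infty)$, passing $\kappa\downarrow0$ is the same as passing $t\to\infty$. Taking reciprocals therefore gives the stated limit.

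No real obstacle is expected; the only care needed is to get a lower bound on the tail that is sharp up to polynomial factors. The trivial bound $\ge e^{-t^2}$-type estimates would give the wrong constant, which is why the rational-prefactor Mills lower bound is used.
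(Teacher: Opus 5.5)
Your proof is correct and is the argument the paper has in mind: the paper writes out no proof and only cites Gordon's Mills-ratio bounds, and your sandwich $\frac{t}{1+t^2}\varphi(t)\le 1-\Phi(t)\le \varphi(t)/t$ with $t=\Phi^{-1}(1-\kappa)$ is that standard derivation. One trivial slip: $\kappa\mapsto\Phi^{-1}(1-\kappa)$ is decreasing, not increasing, on $(0,1/2]$; this changes nothing, since you correctly use that $t\to\infty$ as $\kappa\downarrow0$.
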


\begin{corollary}\label{cor:Theta-Gauss}
$\ThetaPI(\Gamma^\infty)=2$.
\end{corollary}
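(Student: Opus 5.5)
The corollary follows by substituting the two preceding results into the definition of $\ThetaPI$. From the introduction, $C_{2,2}(\Gamma^\infty)=1$ by Esaki--Kazukawa--Mitsuishi \cite{EKM}. In particular $0<C_{2,2}(\Gamma^\infty)<\infty$, so $\ThetaPI(\Gamma^\infty)$ is defined. Next, \Cref{prop:Sep-Gaussian} with $\alpha=1/2$ and $\beta=\kappa\in(0,1/2]$ gives
\[
 \Sep(\Gamma^\infty;1/2,\kappa)=\Phi^{-1}(1-\kappa).
\]
The hypothesis $\alpha+\beta\le1$ holds for these parameters, and only small $\kappa$ matter for the limit superior.

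Substituting both facts into the definition gives
\[
 \ThetaPI(\Gamma^\infty)
 =\limsup_{\kappa\downarrow0}\frac{\Phi^{-1}(1-\kappa)^2}{\log(1/\kappa)}
 =2\limsup_{\kappa\downarrow0}\frac{\Phi^{-1}(1-\kappa)^2}{2\log(1/\kappa)}.
\]
By \Cref{lem:quantile}, the last ratio converges to $1$, so its limit superior equals $1$ and $\ThetaPI(\Gamma^\infty)=2$.

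No step is a real obstacle. The only care needed is to check that \Cref{prop:Sep-Gaussian} applies for all sufficiently small $\kappa$, which it does, and that the cited value $C_{2,2}(\Gamma^\infty)=1$ uses the same pyramid extension of $C_{2,2}$, namely the supremum over members, as in \cite{EKM}.
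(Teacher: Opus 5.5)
Your proof is correct and is essentially the paper's argument: combine $C_{2,2}(\Gamma^\infty)=1$ from \cite{EKM}, the formula $\Sep(\Gamma^\infty;1/2,\kappa)=\Phi^{-1}(1-\kappa)$ from \Cref{prop:Sep-Gaussian}, and \Cref{lem:quantile}. Your added checks, that $\alpha+\beta\le1$ holds and that the limit superior is in fact a limit, are consistent with the paper's one-line computation.
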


\begin{proof}
By \cite{EKM} we know $C_{2,2}(\Gamma^\infty)^2=1$. Combining this with \Cref{prop:Sep-Gaussian} and \Cref{lem:quantile}, we obtain
\[
 \ThetaPI(\Gamma^\infty)
 =\lim_{\kappa\downarrow0}
 \frac{\Phi^{-1}(1-\kappa)^2}{\log(1/\kappa)}=2.
\]
\end{proof}

\subsection{Separation estimates for the Hopf quotients}

The radial map on each quotient is a measure-preserving $1$-Lipschitz observable, and the half-line carrying its image measure $\chi_m$ belongs to the corresponding pyramid. Choosing on this one-dimensional factor a lower interval of measure $1/2$ and an upper half-line of measure $\kappa$ gives a lower bound for the separation distance of the quotient.

For $m\ge1$, write
\[
 H_m\coloneqq([0,\infty),\abs{\cdot},\chi_m).
\]

\begin{lemma}\label{lem:H-membership}
For every $m\ge1$,
\[
 H_m\in\Gamma^\infty/\Z_2,
 \qquad
 H_{2m}\in\Gamma^\infty/U(1).
\]
\end{lemma}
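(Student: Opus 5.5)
The plan is to exhibit $H_m$ directly as a Lipschitz-dominated image of a finite-dimensional model. The key observation is that membership in a pyramid only requires domination by some member.

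By \Cref{lem:radial-quotient}, the radial function $\bar\rho_m^{\R}\colon\Gamma^m/\Z_2\to[0,\infty)$ is $1$-Lipschitz and pushes $\mu_{\Gamma^m/\Z_2}$ forward to $\chi_m$. Likewise, $\bar\rho_m^{\C}\colon\Gamma^m_{\C}/U(1)\to[0,\infty)$ is $1$-Lipschitz and pushes $\mu_{\Gamma^m_{\C}/U(1)}$ forward to $\chi_{2m}$.

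One technical point must be checked first. An mm-space is understood with fully supported measure, and $H_m$ is $([0,\infty),\abs{\cdot},\chi_m)$. We need $\supp\chi_m=[0,\infty)$, which holds because the density $f_m$ from \Cref{lem:chi} is positive on $(0,\infty)$. Hence $H_m$ is a legitimate mm-space. Moreover, the radial map is a $1$-Lipschitz map from the quotient onto $[0,\infty)$ with $(\bar\rho_m^{\R})_*\mu=\chi_m$, so it is a domination map.

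The conclusion follows in two steps.
\begin{enumerate}
\item The domination map gives $H_m\prec\Gamma^m/\Z_2$, that is, $H_m\in\mathcal P_{\Gamma^m/\Z_2}$.
\item Since $\mathcal P_{\Gamma^m/\Z_2}\subset\bigcup_{n}\mathcal P_{\Gamma^n/\Z_2}\subset\Gamma^\infty/\Z_2$, we obtain $H_m\in\Gamma^\infty/\Z_2$.
\end{enumerate}
The complex case is identical. It gives $H_{2m}\prec\Gamma^m_{\C}/U(1)$, hence $H_{2m}\in\mathcal P_{\Gamma^m_{\C}/U(1)}\subset\Gamma^\infty/U(1)$.

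There is no real obstacle here. The only care needed is the support and isomorphism-class bookkeeping, namely that the target half-line with $\chi_m$ is exactly the mm-space $H_m$, together with the fact that the box closure contains the union. Neither the limit nor the box topology is used.
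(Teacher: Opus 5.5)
Your proposal is correct and matches the paper's proof: the radial maps of \Cref{lem:radial-quotient} give the dominations $H_m\prec\Gamma^m/\Z_2$ and $H_{2m}\prec\Gamma^m_{\C}/U(1)$, and these associated pyramids lie in the respective box closures. Your check that $\supp\chi_m=[0,\infty)$ is bookkeeping the paper leaves implicit, and it is correct.
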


\begin{proof}
By \Cref{lem:radial-quotient} we have
$H_m\prec\Gamma^m/\Z_2$ and $H_{2m}\prec\Gamma^m_{\C}/U(1)$,
which proves the lemma.
\end{proof}

For $0<\kappa\le1/2$, denote the medians of $\chi_1$ and $\chi_2$ by $m_1$ and $m_2$, respectively, and their \emph{upper $\kappa$-quantiles} by $q_1(\kappa)$ and $q_2(\kappa)$, respectively.

\begin{proposition}\label{prop:Theta-quotients}
\[
 \ThetaPI(\Gamma^\infty/\Z_2)=4,
 \qquad
 \ThetaPI(\Gamma^\infty/U(1))=4.
\]
\end{proposition}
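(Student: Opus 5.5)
The plan is to sandwich $\Sep(\mathcal P;1/2,\kappa)$ between two quantities that are both asymptotic to $\sqrt{2\log(1/\kappa)}$ as $\kappa\downarrow0$. Here $\mathcal P$ is either quotient pyramid. By \Cref{thm:C22}, $C_{2,2}(\mathcal P)^2=1/2$, which lies in $(0,\infty)$, so $\ThetaPI(\mathcal P)$ is defined and
\[
 \ThetaPI(\mathcal P)=\limsup_{\kappa\downarrow0}\frac{2\,\Sep(\mathcal P;1/2,\kappa)^2}{\log(1/\kappa)}.
\]
It therefore suffices to show $\Sep(\mathcal P;1/2,\kappa)^2/(2\log(1/\kappa))\to1$.

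\emph{Upper bound.} First, $\Gamma^\infty/\Z_2\subset\Gamma^\infty$. Indeed, by \Cref{lem:quotient} the quotient map gives $\Gamma^n/\Z_2\prec\Gamma^n$, so $\mathcal P_{\Gamma^n/\Z_2}\subset\mathcal P_{\Gamma^n}$. Taking unions and box closures gives the inclusion. Likewise $\Gamma^n_{\C}/U(1)\prec\Gamma^n_{\C}=\Gamma^{2n}$ (as mm-spaces), so $\Gamma^\infty/U(1)\subset\Gamma^\infty$.

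The definition of $\Sep$ for pyramids is a limit of suprema over members, so it is monotone under inclusion of pyramids. Hence \Cref{prop:Sep-Gaussian} gives
\[
 \Sep(\mathcal P;1/2,\kappa)\le\Phi^{-1}(1-\kappa).
\]
By \Cref{lem:quantile}, the square of the right-hand side is asymptotic to $2\log(1/\kappa)$. This yields $\ThetaPI(\mathcal P)\le4$.

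\emph{Lower bound.} For any $X\in\mathcal P$ and $\delta>0$ we have $\Sep(X;1/2-\delta,\kappa-\delta)\ge\Sep(X;1/2,\kappa)$. Hence $\Sep(\mathcal P;1/2,\kappa)\ge\Sep(X;1/2,\kappa)$ for every $X\in\mathcal P$.

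Apply this with $X=H_1\in\Gamma^\infty/\Z_2$ and with $X=H_2\in\Gamma^\infty/U(1)$, both memberships holding by \Cref{lem:H-membership}. In $H_m$, take $A=[0,m_m]$ and $B=[q_m(\kappa),\infty)$. These sets have measures $\ge1/2$ and $\ge\kappa$, and for $\kappa\le 1/2$ their distance is $q_m(\kappa)-m_m$. Thus
\[
 \Sep(\mathcal P;1/2,\kappa)\ge q_m(\kappa)-m_m,
\]
where $m_1,m_2$ are fixed constants.

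Now compute the quantiles. By \eqref{eq:half-normal-cdf}, $q_1(\kappa)=\Phi^{-1}(1-\kappa/2)$. By \Cref{lem:quantile}, $q_1(\kappa)^2\sim2\log(2/\kappa)\sim2\log(1/\kappa)$. By \eqref{eq:rayleigh-cdf}, $q_2(\kappa)=\sqrt{2\log(1/\kappa)}$ exactly. Since $m_m$ is fixed and $q_m(\kappa)\to\infty$, we get $(q_m(\kappa)-m_m)^2/(2\log(1/\kappa))\to1$. This yields $\ThetaPI(\mathcal P)\ge4$.

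Combining the two bounds gives $\ThetaPI(\mathcal P)=4$ for both quotients. I expect no serious obstacle. The only points needing care are the monotonicity of the pyramid separation distance under inclusion and the $\delta$-shift in its definition; both follow directly from the definitions as indicated above.
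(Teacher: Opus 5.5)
Your proposal is correct and follows essentially the same route as the paper. The upper bound comes from $\Gamma^\infty/G\subset\Gamma^\infty$, monotonicity of $\Sep$ under inclusion, and \Cref{prop:Sep-Gaussian}; the lower bound comes from $H_1$ and $H_2$ with the sets $[0,m_i]$ and $[q_i(\kappa),\infty)$ plus the quantile asymptotics, and your explicit check that $\Sep(\mathcal P;1/2,\kappa)\ge\Sep(X;1/2,\kappa)$ for $X\in\mathcal P$ via the $\delta$-shift spells out a step the paper uses implicitly.
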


\begin{proof}
We first prove the common upper bound.
The quotient maps are measure-preserving and $1$-Lipschitz, so
\[
 \Gamma^n/\Z_2\prec\Gamma^n,
 \qquad
 \Gamma^n_{\C}/U(1)\prec\Gamma^n_{\C}\simeq\Gamma^{2n}.
\]
Taking the box closures of the unions of the associated pyramids gives
\[
 \Gamma^\infty/\Z_2\subset\Gamma^\infty,
 \qquad
 \Gamma^\infty/U(1)\subset\Gamma^\infty.
\]
The separation distance is monotone under inclusion of pyramids by its definition.
Hence, for $G\in\{\Z_2,U(1)\}$ and $0<\kappa\le1/2$, \Cref{prop:Sep-Gaussian} yields
\[
 \Sep(\Gamma^\infty/G;1/2,\kappa)
 \le\Sep(\Gamma^\infty;1/2,\kappa)
 =\Phi^{-1}(1-\kappa).
\]
Using $C_{2,2}(\Gamma^\infty/G)^2=1/2$ from \Cref{thm:C22} and \Cref{lem:quantile}, we obtain
\[
 \ThetaPI(\Gamma^\infty/G)
 \le\lim_{\kappa\downarrow0}
 \frac{\Phi^{-1}(1-\kappa)^2}{(1/2)\log(1/\kappa)}
 =4.
\]

It remains to prove the lower bounds.
We first use $H_1\in\Gamma^\infty/\Z_2$. The distribution function of $S_1=\abs\xi$, where $\xi\sim N(0,1)$, is given by \eqref{eq:half-normal-cdf}. Its median and upper $\kappa$-quantile are
\[
 m_1=\Phi^{-1}(3/4),
 \qquad
 q_1(\kappa)=\Phi^{-1}(1-\kappa/2).
\]
The lower interval $[0,m_1]$ has $\chi_1$-measure $1/2$, and the upper half-line $[q_1(\kappa),\infty)$ has $\chi_1$-measure $\kappa$. Moreover, $\kappa\le1/2$ gives $q_1(\kappa)\ge m_1$, so the distance between these two sets is $q_1(\kappa)-m_1$. Since $H_1\in\Gamma^\infty/\Z_2$,
\begin{equation*}
 \Sep(\Gamma^\infty/\Z_2;1/2,\kappa)
 \ge\Sep(H_1;1/2,\kappa)
 \ge q_1(\kappa)-m_1.
\end{equation*}
Applying \Cref{lem:quantile} with $\kappa/2$, we obtain, as $\kappa\downarrow0$,
\[
 \frac{q_1(\kappa)^2}{2\log(2/\kappa)}\xrightarrow[\kappa\downarrow0]{}1,
 \qquad
 \frac{\log(2/\kappa)}{\log(1/\kappa)}
 =1+\frac{\log2}{\log(1/\kappa)}\xrightarrow[\kappa\downarrow0]{}1.
\]
Also, $q_1(\kappa)\xrightarrow[\kappa\downarrow0]{}\infty$ and $m_1$ is fixed, so
\[
 \frac{(q_1(\kappa)-m_1)^2}{q_1(\kappa)^2}
 =\left(1-\frac{m_1}{q_1(\kappa)}\right)^2\xrightarrow[\kappa\downarrow0]{}1.
\]
Using these three limits to rewrite the left-hand side gives
\[
 \frac{(q_1(\kappa)-m_1)^2}{\log(1/\kappa)}
 =2\cdot\frac{q_1(\kappa)^2}{2\log(2/\kappa)}
 \cdot\frac{\log(2/\kappa)}{\log(1/\kappa)}
 \cdot\left(1-\frac{m_1}{q_1(\kappa)}\right)^2
 \xrightarrow[\kappa\downarrow0]{}2\cdot1\cdot1\cdot1=2.
\]
Furthermore, \Cref{thm:C22} gives $C_{2,2}(\Gamma^\infty/\Z_2)^2=1/2$, and hence
\[
 \ThetaPI(\Gamma^\infty/\Z_2)
 \ge\lim_{\kappa\downarrow0}
 \frac{(q_1(\kappa)-m_1)^2}{(1/2)\log(1/\kappa)}=4.
\]

We next use $H_2\in\Gamma^\infty/U(1)$. Equation~\eqref{eq:rayleigh-cdf} shows that the upper-tail probability of $S_2\sim\chi_2$ is
\[
 \mathbb P(S_2\ge r)=e^{-r^2/2}\qquad(r\ge0).
\]
Therefore, its median and upper $\kappa$-quantile are
\[
 m_2=\sqrt{2\log2},
 \qquad
 q_2(\kappa)=\sqrt{2\log(1/\kappa)}.
\]
The lower interval $[0,m_2]$ has $\chi_2$-measure $1/2$, and the upper half-line $[q_2(\kappa),\infty)$ has $\chi_2$-measure $\kappa$. Moreover, $\kappa\le1/2$ gives $q_2(\kappa)\ge m_2$, so the distance between these two sets is $q_2(\kappa)-m_2$. Since $H_2\in\Gamma^\infty/U(1)$,
\begin{equation*}
 \Sep(\Gamma^\infty/U(1);1/2,\kappa)
 \ge\Sep(H_2;1/2,\kappa)
 \ge q_2(\kappa)-m_2.
\end{equation*}
Since $q_2(\kappa)^2=2\log(1/\kappa)$, while $q_2(\kappa)\to\infty$ as $\kappa\downarrow0$ and $m_2$ is fixed,
\[
 \frac{(q_2(\kappa)-m_2)^2}{\log(1/\kappa)}
 =2\left(1-\frac{m_2}{q_2(\kappa)}\right)^2
 \xrightarrow[\kappa\downarrow0]{}2.
\]
Furthermore, $C_{2,2}(\Gamma^\infty/U(1))^2=1/2$, and hence
\[
 \ThetaPI(\Gamma^\infty/U(1))
 \ge\lim_{\kappa\downarrow0}
 \frac{(q_2(\kappa)-m_2)^2}{(1/2)\log(1/\kappa)}=4.
\]
\end{proof}

\begin{proof}[Proof of \Cref{thm:intro-scale-separation}]
The Gaussian value follows from \Cref{cor:Theta-Gauss}, and the two quotient values follow from \Cref{prop:Theta-quotients}.
\end{proof}

\begin{corollary}\label{cor:Gauss-vs-quot}
For every $a,b>0$ and $G\in\{\Z_2,U(1)\}$,
\[
 a\Gamma^\infty\ne b(\Gamma^\infty/G).
\]
\end{corollary}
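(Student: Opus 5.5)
We argue by contradiction, using the scale invariance of $\ThetaPI$ together with \Cref{thm:intro-scale-separation}. Suppose that $a\Gamma^\infty=b(\Gamma^\infty/G)$ for some $a,b>0$ and $G\in\{\Z_2,U(1)\}$.

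Dividing by $b$ gives $c\Gamma^\infty=\Gamma^\infty/G$ with $c\coloneqq a/b>0$, since $(1/b)(b\mathcal P)=\mathcal P$ for any pyramid $\mathcal P$.

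Before applying the invariance, we check that $\ThetaPI$ is defined on both sides, i.e.\ that $0<C_{2,2}<\infty$. By \cite{EKM}, $C_{2,2}(\Gamma^\infty)=1$. By $1$-homogeneity, $C_{2,2}(c\Gamma^\infty)=c\in(0,\infty)$. By \Cref{thm:C22}, $C_{2,2}(\Gamma^\infty/G)=1/\sqrt2$.

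Next we record the rescaling invariance $\ThetaPI(c\mathcal P)=\ThetaPI(\mathcal P)$. It follows directly from the definition. Both $\Sep(\cdot;1/2,\kappa)$ and $C_{2,2}$ are $1$-homogeneous, so the factor $c^2$ cancels between numerator and denominator for every $\kappa$, and hence also in the $\limsup$.

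We now compare the two sides:
\[
 2=\ThetaPI(\Gamma^\infty)=\ThetaPI(c\Gamma^\infty)=\ThetaPI(\Gamma^\infty/G)=4.
\]
Here the first equality is \Cref{cor:Theta-Gauss}, the second is the rescaling invariance, the third uses $c\Gamma^\infty=\Gamma^\infty/G$, and the last is \Cref{prop:Theta-quotients}. This is a contradiction, so $a\Gamma^\infty\ne b(\Gamma^\infty/G)$.

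There is no substantial obstacle. The only point needing care is the rescaling invariance, which relies on the $1$-homogeneity of $\Sep$ for pyramids. That homogeneity holds because $\Sep(aX;\alpha,\beta)=a\Sep(X;\alpha,\beta)$ for each member $X$ of the pyramid, and this identity is preserved when taking the supremum over $X\in\mathcal P$ and the limit $\delta\downarrow0$.
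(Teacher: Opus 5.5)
Your proof is correct and follows the paper's own argument. Like the paper, you assume equality, use the scale invariance of $\ThetaPI$, and reach the contradiction $2=4$ from \Cref{cor:Theta-Gauss} and \Cref{prop:Theta-quotients}. Your extra checks simply spell out what the paper states without proof in the introduction: that $0<C_{2,2}<\infty$ on both sides, and that the invariance follows from the $1$-homogeneity of $\Sep$ and $C_{2,2}$.
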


\begin{proof}
By the scale invariance of $\ThetaPI$ stated in the introduction, equality of $a\Gamma^\infty$ and $b(\Gamma^\infty/G)$ would imply $\ThetaPI(\Gamma^\infty)=\ThetaPI(\Gamma^\infty/G)$. However, \Cref{cor:Theta-Gauss,prop:Theta-quotients} show that the left-hand side is $2$ and the right-hand side is $4$, a contradiction.
\end{proof}

\section{Separation of the \texorpdfstring{$\Z_2$}{Z2} quotient from the \texorpdfstring{$U(1)$}{U(1)} quotient}

It remains to compare the two quotients with each other. We prove the strict containment stated in \Cref{thm:intro-strict-containment} by deriving the inclusion of pyramids from finite-dimensional quotient maps and showing that the half-normal line belongs to $\Gamma^\infty/\Z_2$ but not to $\Gamma^\infty/U(1)$.

\subsection{Inclusion of pyramids induced by quotient maps}

In what follows, $\Gamma^n_{\C}/\Z_2$ denotes the quotient by the action $z\mapsto\pm z$ of the subgroup $\{\pm1\}\subset U(1)$. This subgroup relation gives, for every $n$, the map
\[
 Q_n\colon\Gamma^n_{\C}/\Z_2\longrightarrow\Gamma^n_{\C}/U(1),
 \qquad
 [z]_{\Z_2}\longmapsto[z]_{U(1)}.
\]
If $[z]_{\Z_2}=[w]_{\Z_2}$, then $w=\varepsilon z$ for some $\varepsilon\in\{\pm1\}$. Since $\varepsilon\in U(1)$, we have $[z]_{U(1)}=[w]_{U(1)}$. Thus $Q_n$ is independent of the representative and is clearly surjective. Moreover,
\[
 d_{\C^n/U(1)}(Q_n([z]),Q_n([w]))
 =\inf_{\theta\in\R}\norm{z-e^{i\theta}w}_2
 \le\min_{\varepsilon\in\{\pm1\}}\norm{z-\varepsilon w}_2
 =d_{\C^n/\Z_2}([z],[w]).
\]
Therefore, $Q_n$ is $1$-Lipschitz and, in particular, Borel measurable. Let $\pi_{\Z_2}$ and $\pi_{U(1)}$ be the quotient maps from $\C^n$ to the two quotients. Since
\[
 \pi_{U(1)}=Q_n\circ\pi_{\Z_2},
\]
we have
\[
 (Q_n)_*\mu_{\Gamma^n_{\C}/\Z_2}
 =(Q_n)_*(\pi_{\Z_2})_*\gamma^{2n}
 =(\pi_{U(1)})_*\gamma^{2n}
 =\mu_{\Gamma^n_{\C}/U(1)}.
\]
Thus $Q_n$ is measure-preserving, and
\begin{equation*}
 \Gamma^n_{\C}/U(1)\prec\Gamma^n_{\C}/\Z_2
 \simeq\Gamma^{2n}/\Z_2.
\end{equation*}
This domination and the monotonicity of associated pyramids give
\[
 \bigcup_{n\ge1}\mathcal P_{\Gamma^n_{\C}/U(1)}
 \subset\Gamma^\infty/\Z_2.
\]
Since the right-hand side is closed with respect to the box distance, taking the box closure of the left-hand side gives
\begin{equation}
 \Gamma^\infty/U(1)\subset\Gamma^\infty/\Z_2.                   \label{eq:natural-inclusion}
\end{equation}
By \Cref{lem:H-membership}, $H_1\in\Gamma^\infty/\Z_2$. It therefore suffices to prove $H_1\notin\Gamma^\infty/U(1)$ in order to show that this inclusion is strict.

\subsection{The half-normal line does not belong to the \texorpdfstring{$U(1)$}{U(1)} quotient}\label{sec:H1-not-U1}

We use the following approximation and lifting result to transfer an observable on a box limit to finite-dimensional quotients.

\begin{lemma}\label{lem:observable-approx}
Suppose that $X_j$ converges to $X$ in the box distance, and let $f\in\Lip_1(X)$. Then there exist $f_j\in\Lip_1(X_j)$ such that
\begin{equation*}
 (f_j)_*\mu_{X_j}\Rightarrow f_*\mu_X
\end{equation*}
as $j\to\infty$. Moreover, if $f\ge0$, then each $f_j$ can be chosen to satisfy $f_j\ge0$.

Suppose also that $X_j\prec Y_j$ and that a \emph{dominating map} $p_j\colon Y_j\to X_j$ is given. Then
\[
 F_j\coloneqq f_j\circ p_j\in\Lip_1(Y_j)
\]
satisfies
\[
 (F_j)_*\mu_{Y_j}=(f_j)_*\mu_{X_j}\Rightarrow f_*\mu_X
\]
as $j\to\infty$.
\end{lemma}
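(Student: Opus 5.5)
The plan is to work directly from the definition of the box distance via parameters. Set $\varepsilon_j\coloneqq\Box(X_j,X)+1/j\to0$. Choose parameters $\varphi_j\colon I\to X$ and $\psi_j\colon I\to X_j$ together with Borel sets $I_j\subset I$ such that $\mathcal L^1(I_j)\ge1-\varepsilon_j$ and
\[
 \abs{\varphi_j^*d_X(s,t)-\psi_j^*d_{X_j}(s,t)}\le\varepsilon_j\qquad(s,t\in I_j).
\]
On the subset $K_j\coloneqq\psi_j(I_j)\subset X_j$ we define a function $g_j(\psi_j(s))\coloneqq f(\varphi_j(s))$ for $s\in I_j$. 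This is not yet well defined or $1$-Lipschitz, but it is $1$-Lipschitz up to an additive error $\varepsilon_j$. To repair it, we take the McShane-type extension
\[
 f_j(y)\coloneqq\inf_{s\in I_j}\bigl\{f(\varphi_j(s))+d_{X_j}(y,\psi_j(s))\bigr\}.
\]
This is an infimum of $1$-Lipschitz functions, hence $1$-Lipschitz on $X_j$ as long as it is finite. Finiteness holds because $f$ is $1$-Lipschitz and the distances are compared up to $\varepsilon_j$: for $s,t\in I_j$,
\[
 f(\varphi_j(s))+d_{X_j}(\psi_j(t),\psi_j(s))\ge f(\varphi_j(t))-\varepsilon_j .
\]
The same inequality yields $\abs{f_j(\psi_j(t))-f(\varphi_j(t))}\le\varepsilon_j$ for all $t\in I_j$. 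The upper bound $f_j(\psi_j(t))\le f(\varphi_j(t))$ is obtained by taking $s=t$.

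Next we derive the weak convergence. Both functions are pushforwards of $\mathcal L^1$:
\[
 (f_j)_*\mu_{X_j}=(f_j\circ\psi_j)_*\mathcal L^1,\qquad f_*\mu_X=(f\circ\varphi_j)_*\mathcal L^1 .
\]
The functions $f_j\circ\psi_j$ and $f\circ\varphi_j$ differ by at most $\varepsilon_j$ outside a set of measure at most $\varepsilon_j$. Hence their Ky Fan (or Prokhorov) distance is at most $\varepsilon_j$, so the Prokhorov distance between $(f_j)_*\mu_{X_j}$ and $f_*\mu_X$ is at most $\varepsilon_j\to0$, and weak convergence follows.

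For the nonnegativity clause, if $f\ge0$ then every term in the infimum is $\ge0$, so $f_j\ge0$ automatically. Alternatively, one can replace $f_j$ by $\max\{f_j,0\}$, which is still $1$-Lipschitz and only moves values where $f_j<0$, that is, within $\varepsilon_j$ of a nonnegative value.

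The lifting part is immediate. The map $F_j=f_j\circ p_j$ is a composition of $1$-Lipschitz maps, and since $(p_j)_*\mu_{Y_j}=\mu_{X_j}$,
\[
 (F_j)_*\mu_{Y_j}=(f_j)_*(p_j)_*\mu_{Y_j}=(f_j)_*\mu_{X_j}.
\]

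The main point needing care is measurability and well-definedness. The infimum over the uncountable set $I_j$ is fine, because an infimum of a family of $1$-Lipschitz functions is again $1$-Lipschitz, hence continuous and Borel. The comparison must be done on the parameter level so that non-injectivity of $\psi_j$ causes no trouble. Should one prefer, it is also possible to cite the standard fact that box convergence implies Prokhorov closeness of $1$-Lipschitz push-forwards, as in \cite{ShioyaBook}.
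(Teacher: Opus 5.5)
Your proof is correct, but it takes a different route from the paper.

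The paper does not construct $f_j$ itself. It cites the fact that box convergence implies Hausdorff convergence of the $1$-measurements with respect to the Prokhorov distance (Proposition~5.5 and Lemma~5.12 of \cite{ShioyaBook}). From this it picks some $\tilde f_j\in\Lip_1(X_j)$ with $(\tilde f_j)_*\mu_{X_j}\Rightarrow f_*\mu_X$. Nonnegativity is then arranged afterwards by passing to $\max\{\tilde f_j,0\}$ and using the continuity of the positive-part map.

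You instead prove directly the special case of that measurement result that is needed. Working at the parameter level, your McShane-type infimum $f_j$ is $1$-Lipschitz and satisfies $\abs{f_j\circ\psi_j-f\circ\varphi_j}\le\varepsilon_j$ on $I_j$. The Ky Fan estimate then gives a Prokhorov bound of $\varepsilon_j$ between $(f_j)_*\mu_{X_j}$ and $f_*\mu_X$.

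What your route buys:
\begin{enumerate}
\item The argument is self-contained.
\item It gives an explicit quantitative rate in terms of $\Box(X_j,X)$.
\item Nonnegativity comes for free, since every term in the infimum is nonnegative when $f\ge0$, so no truncation step is needed.
\end{enumerate}
The paper's route is much shorter, at the cost of relying on the measurement machinery.

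One trivial loose end: for the finitely many $j$ with $\varepsilon_j\ge1$, the set $I_j$ may be empty and the infimum is then $+\infty$. Define $f_j$ arbitrarily there (e.g.\ $f_j\equiv0$); this does not affect the limit.
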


\begin{proof}
Under box convergence, the set of all pushforward measures under $1$-Lipschitz maps to $\R$, namely, the \emph{$1$-measurement}, converges in the Hausdorff distance induced by the Prokhorov distance \cite[Proposition~5.5 and Lemma~5.12]{ShioyaBook}. Since $f_*\mu_X$ belongs to the $1$-measurement of $X$, we may choose $\tilde f_j\in\Lip_1(X_j)$ such that
\[
 (\tilde f_j)_*\mu_{X_j}\Rightarrow f_*\mu_X
\]
as $j\to\infty$. If $f\ge0$, set $f_j\coloneqq\max\{\tilde f_j,0\}$, and otherwise set $f_j\coloneqq\tilde f_j$. The positive-part map is continuous, and $\max\{f,0\}=f$. Thus the asserted convergence holds, and in the case $f\ge0$ we have $f_j\ge0$. Finally, because $p_j$ is measure-preserving, $F_j=f_j\circ p_j$ satisfies
\[
 (F_j)_*\mu_{Y_j}
 =(f_j)_*(p_j)_*\mu_{Y_j}
 =(f_j)_*\mu_{X_j}
 \Rightarrow f_*\mu_X
\]
as $j\to\infty$.
\end{proof}

The comparison of third moments requires a dimension-independent bound on the $L^4$-norm of the second Wiener chaos component. \emph{Hypercontractivity}, due to Nelson \cite{nelson1973}, gives the following degree-two estimate; see \cite[(1.71), p.~63]{Nualart}.

\begin{lemma}\label{lem:H2-hypercontractivity}
For every $q\in\mathcal H_2=\mathcal Q_n$,
\begin{equation*}
 \norm{q}_4\le3\norm{q}_2.
\end{equation*}
\end{lemma}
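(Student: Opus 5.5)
The plan is to derive the bound from Nelson's hypercontractivity for the Ornstein--Uhlenbeck semigroup, and to keep a direct moment computation as an elementary check that does not depend on the precise form of the cited inequality.

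\emph{Route via hypercontractivity.} Let $T_t$ denote the Ornstein--Uhlenbeck semigroup on $L^2(\gamma^n)$. It acts diagonally on the chaos decomposition \eqref{eq:chaos-decomposition} by $T_tP_m=e^{-mt}P_m$. Nelson's theorem gives $\norm{T_tf}_4\le\norm{f}_2$ whenever $e^{2t}\ge(4-1)/(2-1)=3$. Take $t$ with $e^{2t}=3$. For $q\in\mathcal H_2$ we have $T_tq=e^{-2t}q=q/3$, so
\[
\norm{q}_4=3\norm{T_tq}_4\le3\norm{q}_2 .
\]
This is exactly the degree-two case of \cite[(1.71)]{Nualart}, and the constant is independent of $n$.

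\emph{Elementary route.} By \Cref{lem:quad-trace}, write $q=q_A$ with $A\in\operatorname{Sym}(n)$.

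\begin{enumerate}
\item Diagonalize $A=O^{\mathsf T}\Lambda O$ with $O$ orthogonal and $\Lambda$ diagonal with entries $\lambda_i$. Since $\gamma^n$ is rotation invariant and $\tr A=\tr\Lambda$, the function $q$ has the same law as $\sum_i\lambda_i(y_i^2-1)$, where the $y_i$ are i.i.d.\ $N(0,1)$.
\item By \Cref{lem:quad-trace}, $\norm{q}_2^2=2\sum_i\lambda_i^2$.
\item Set $Z_i=y_i^2-1$. These are independent and centered, so only pairings survive when expanding the fourth moment:
\[
\mathbb E q^4=\sum_i\lambda_i^4\,\mathbb EZ_i^4+3\sum_{i\ne j}\lambda_i^2\lambda_j^2\,(\mathbb EZ_i^2)^2 .
\]
\item \Cref{lem:normal-mom}, together with $\mathbb E\xi^8=105$, gives $\mathbb EZ^2=2$ and $\mathbb EZ^4=105-60+18-4+1=60$.
\item Substituting these values,
\[
\mathbb E q^4=12\Bigl(\sum_i\lambda_i^2\Bigr)^2+48\sum_i\lambda_i^4\le60\Bigl(\sum_i\lambda_i^2\Bigr)^2=15\norm{q}_2^4 .
\]
\item Hence $\norm{q}_4\le15^{1/4}\norm{q}_2\le3\norm{q}_2$.
\end{enumerate}

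I do not expect any real obstacle. The only points needing care are the rotation-invariance reduction (the map $x\mapsto Ox$ preserves $\gamma^n$, so the law of $q$ is unchanged) and the bookkeeping of the fourth-moment expansion. In the expansion, odd powers of any $Z_i$ with a single occurrence vanish because $Z_i$ is centered. The terms that survive are either the fourth power of a single index or two distinct indices each appearing twice; the latter appear with multiplicity $3$.
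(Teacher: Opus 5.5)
Your proposal is correct. Your first route (hypercontractivity) is what the paper does; your second route is a genuinely different proof.

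\textbf{Hypercontractivity route.} The paper gives no separate argument for this lemma. It simply cites Nelson's theorem in the form of estimate (1.71) in Nualart's book: $\norm{F}_q\le(q-1)^{m/2}\norm{F}_2$ for $F\in\mathcal H_m$. The case $m=2$, $q=4$ of that estimate is exactly your computation with $e^{2t}=3$ and $T_tq=q/3$.

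\textbf{Elementary route.} This one is self-contained, and the details check out. The orthogonal diagonalization reduces $q$ to $\sum_i\lambda_i(y_i^2-1)$. The value $\mathbb EZ^4=105-60+18-4+1=60$ is right. The pairing multiplicity $3$ over ordered pairs $i\ne j$ is right. These give $\mathbb Eq^4=12(\sum_i\lambda_i^2)^2+48\sum_i\lambda_i^4$ and the sharper constant $15^{1/4}\approx1.97$.

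\textbf{What each buys.} Your moment computation avoids a deep external theorem. It needs only rotation invariance of $\gamma^n$, the spectral theorem, and the Gaussian moments $(2k-1)!!$, in the same spirit as the diagonalization the paper itself performs in the $U(1)$ third-moment lemma. It is also all the paper needs, since the lemma is used only once: to bound $\sup_j\norm{q_j}_4$ independently of the dimension $2N_j$. Hypercontractivity instead controls every chaos degree and every exponent $q$ at once. Your computation is tailored to degree two and the fourth moment; extending it to higher chaoses or higher $L^p$ norms would need increasingly heavy combinatorics.
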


The following orthogonal equivariance shows that $P_2$ preserves $U(1)$-invariance. Let
\[
O(n)\coloneqq\{O\in\R^{n\times n}\mid O^{\mathsf T}O=I_n\}
\]
be the \emph{real orthogonal group}.

\begin{lemma}\label{lem:P2-equiv}
For $O\in O(n)$, define $\mathcal U_O\colon L^2(\gamma^n)\to L^2(\gamma^n)$ by $\mathcal U_Ou(x)\coloneqq u(O^{-1}x)$. Then
\[
 P_2\mathcal U_O=\mathcal U_OP_2.
\]
Consequently, if $K\subset O(n)$ is a subgroup and $u\in L^2(\gamma^n)$ satisfies $u(O^{-1}x)=u(x)$ for every $O\in K$, then $P_2u(O^{-1}x)=P_2u(x)$ also holds for every $O\in K$.
\end{lemma}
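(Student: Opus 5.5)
The plan is to show first that $\mathcal U_O$ is a unitary operator on $L^2(\gamma^n)$ that maps each Wiener chaos $\mathcal H_m$ onto itself. Once both facts are in hand, the commutation $P_2\mathcal U_O=\mathcal U_OP_2$ follows by a general argument.

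Unitarity is immediate. The Gaussian measure $\gamma^n$ is invariant under every orthogonal map, so $\mathcal U_O$ preserves $L^2$-norms. It is invertible with inverse $\mathcal U_{O^{-1}}$.

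For invariance of $\mathcal H_2$, I will use the identification $\mathcal H_2=\mathcal Q_n$ from \Cref{lem:quad-trace}. Writing $y=O^{-1}x=O^{\mathsf T}x$, we get
\[
 q_A(O^{\mathsf T}x)=x^{\mathsf T}OAO^{\mathsf T}x-\tr A=q_{OAO^{\mathsf T}}(x),
\]
because $\tr(OAO^{\mathsf T})=\tr A$. Hence $\mathcal U_O\mathcal H_2\subset\mathcal H_2$. Applying the same computation to $O^{-1}$ gives equality.

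The same kind of argument handles $\mathcal H_0$ and $\mathcal H_1$: constants are fixed by $\mathcal U_O$, and linear forms go to linear forms. For higher chaoses, the cleanest route avoids identifying each $\mathcal H_m$ separately. Let $\mathcal V_m$ be the space of polynomials of degree at most $m$. By the triangularity of Hermite polynomials, $\mathcal V_m=\bigoplus_{k\le m}\mathcal H_k$. Linear changes of variables preserve $\mathcal V_m$, so $\mathcal U_O$ leaves each $\mathcal V_m$ invariant. A unitary operator that leaves both $\mathcal V_m$ and $\mathcal V_{m-1}$ invariant also leaves the orthogonal complement $\mathcal V_m\ominus\mathcal V_{m-1}=\mathcal H_m$ invariant. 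Only $\mathcal H_2$ is actually needed, and that case already follows from the direct computation above.

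Now take $u\in L^2(\gamma^n)$ and split it as $u=P_2u+(u-P_2u)$, where $u-P_2u\perp\mathcal H_2$. Applying the unitary $\mathcal U_O$, we have $\mathcal U_OP_2u\in\mathcal H_2$. Moreover, $\mathcal U_O(u-P_2u)\perp\mathcal U_O\mathcal H_2=\mathcal H_2$, since unitaries preserve orthogonality. So $\mathcal U_Ou$ decomposes as an element of $\mathcal H_2$ plus something orthogonal to $\mathcal H_2$. By uniqueness of orthogonal decomposition, $P_2\mathcal U_Ou=\mathcal U_OP_2u$.

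The consequence follows directly. If $\mathcal U_Ou=u$ for every $O\in K$, then
\[
 \mathcal U_OP_2u=P_2\mathcal U_Ou=P_2u,
\]
which is exactly the stated invariance, holding as an identity in $L^2$ (almost everywhere). Since $P_2u$ is a polynomial, the identity actually holds everywhere.

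The main point needing care is the invariance of $\mathcal H_2$ under $\mathcal U_O$, which the quadratic-form identification settles. The rest is routine Hilbert-space bookkeeping.
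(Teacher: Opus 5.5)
Your proposal is correct and follows essentially the same route as the paper: $\mathcal U_O$ is an isometry because $\gamma^n$ is orthogonally invariant, $\mathcal Q_n$ is preserved via $\mathcal U_Oq_A=q_{OAO^{\mathsf T}}$, and the commutation then follows from uniqueness of the orthogonal projection onto $\mathcal H_2$. Your digression on the higher chaoses is not needed. Your closing remark, which upgrades the almost-everywhere identity to a pointwise one, is the step the paper carries out later, in Step~2 of the proof of \Cref{thm:H1-not-U1}.
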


\begin{proof}
Since $O$ preserves Gaussian measure, a change of variables gives, for every $u,v\in L^2(\gamma^n)$,
\[
 \inner{\mathcal U_Ou}{\mathcal U_Ov}_{L^2(\gamma^n)}
 =\inner{u}{v}_{L^2(\gamma^n)}.
\]
For $A\in\operatorname{Sym}(n)$, using $O^{-1}=O^{\mathsf T}$ and
\[
 \tr(OAO^{\mathsf T})=\tr(AO^{\mathsf T}O)=\tr A,
\]
we obtain
\[
 (\mathcal U_Oq_A)(x)
 =q_A(O^{\mathsf T}x)
 =x^{\mathsf T}OAO^{\mathsf T}x-\tr A
 =q_{OAO^{\mathsf T}}(x).
\]
Thus $\mathcal U_O$ preserves $\mathcal Q_n$. For every $q\in\mathcal Q_n$, we have $\mathcal U_{O^{-1}}q\in\mathcal Q_n$, and hence
\[
 \inner{\mathcal U_O(u-P_2u)}{q}_{L^2(\gamma^n)}
 =\inner{u-P_2u}{\mathcal U_{O^{-1}}q}_{L^2(\gamma^n)}=0.
\]
Therefore, $\mathcal U_OP_2u$ is the element of $\mathcal Q_n$ closest to $\mathcal U_Ou$. Its uniqueness gives $P_2\mathcal U_Ou=\mathcal U_OP_2u$. The final assertion follows by applying this identity to each $O\in K$.
\end{proof}

For a centered quadratic form that retains $U(1)$-invariance, we use the following upper bound for the third moment.

\begin{lemma} \label{lem:U1-H2}
On $\C^N\simeq\R^{2N}$, set
\[
 U_\theta\colon\C^N\to\C^N,
 \qquad U_\theta z\coloneqq e^{i\theta}z
 \qquad(\theta\in\R).
\]
Suppose that $q\in\mathcal Q_{2N}$ satisfies
\begin{equation*}
 q(U_\theta z)=q(z)
 \qquad(z\in\C^N,\ \theta\in\R).
\end{equation*}
Then there exist complex orthonormal coordinates $z=(z_1,\dots,z_N)$ on $\C^N$ and real numbers $\lambda_1,\dots,\lambda_N$ such that
\begin{equation*}
 q(z)=\sum_{k=1}^N\lambda_k(\abs{z_k}^2-2).
\end{equation*}
Moreover,
\begin{equation*}
 \int_{\R^{2N}}q^2\dd\gamma^{2N}=4\sum_{k=1}^N\lambda_k^2,
 \qquad
 \int_{\R^{2N}}q^3\dd\gamma^{2N}=16\sum_{k=1}^N\lambda_k^3,
\end{equation*}
and consequently
\begin{equation*}
 \left|\int_{\R^{2N}}q^3\dd\gamma^{2N}\right|
 \le2\left(\int_{\R^{2N}}q^2\dd\gamma^{2N}\right)^{3/2}.
\end{equation*}
\end{lemma}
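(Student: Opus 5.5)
By \Cref{lem:quad-trace}, write $q=q_A$ with $A\in\operatorname{Sym}(2N)$, and let $J$ be the real matrix of multiplication by $i$ on $\C^N\simeq\R^{2N}$. Then $U_\theta=\cos\theta\,I+\sin\theta\,J$ is orthogonal, and $q_A(U_\theta x)=q_{U_\theta^{\mathsf T}AU_\theta}(x)$, as computed in the proof of \Cref{lem:P2-equiv}. Since $A\mapsto q_A$ is injective (\Cref{lem:quad-trace}), invariance gives $U_\theta^{\mathsf T}AU_\theta=A$ for all $\theta$. Differentiating at $\theta=0$ yields $AJ=JA$, so $A$ is complex-linear, i.e.\ $A$ is the realification of an $N\times N$ complex matrix $M$. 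Symmetry of $A$ (with respect to $\operatorname{Re}\inner{\cdot}{\cdot}_{\C}$) together with commuting with $J$ makes $M$ Hermitian. The spectral theorem then gives a unitary change of coordinates diagonalizing $M$ with real eigenvalues $\lambda_k$. The corresponding real $2N\times 2N$ orthogonal matrix preserves $\gamma^{2N}$, and in these coordinates $x^{\mathsf T}Ax=\sum_k\lambda_k|z_k|^2$ and $\tr A=2\sum_k\lambda_k$. This gives $q(z)=\sum_k\lambda_k(|z_k|^2-2)$.

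For the moments, set $Y_k\coloneqq|z_k|^2-2$. Under $\gamma^{2N}$ in the new orthonormal coordinates, $|z_k|^2=\xi_{2k-1}^2+\xi_{2k}^2$ is $\chi^2_2$-distributed, i.e.\ exponential with mean $2$, and the $Y_k$ are independent and centered. An exponential variable $E$ with mean $2$ has central moments $\mathbb E(E-2)^2=4$ and $\mathbb E(E-2)^3=16$, since its cumulants are $\kappa_n=(n-1)!\,2^n$. For independent centered summands, the second and third moments are additive (cross terms vanish because each contains a factor of mean zero). Hence $\int q^2=4\sum\lambda_k^2$ and $\int q^3=16\sum\lambda_k^3$. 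The second formula can also be checked against $2\tr(A^2)=2\cdot2\sum\lambda_k^2$ from \Cref{lem:quad-trace}.

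The final inequality follows from
\[
\Bigl|\sum_k\lambda_k^3\Bigr|\le\max_k|\lambda_k|\sum_k\lambda_k^2\le\Bigl(\sum_k\lambda_k^2\Bigr)^{3/2}.
\]
Thus $|\int q^3|\le16(\sum\lambda_k^2)^{3/2}=16\,(\int q^2/4)^{3/2}=2(\int q^2)^{3/2}$.

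The only delicate step is the first one: justifying that invariance of the function $q$ forces invariance of the matrix $A$, which uses injectivity, and then the identification of $J$-commuting real symmetric matrices with complex Hermitian ones. The rest is a direct computation.
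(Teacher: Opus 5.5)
Your proposal is correct and follows essentially the same route as the paper: reduce invariance to $U_\theta^{\mathsf T}AU_\theta=A$, differentiate at $\theta=0$ to get $AJ=JA$, identify $A$ with a Hermitian matrix, diagonalize it unitarily, and compute the moments of the independent centered summands $\lambda_k(\abs{z_k}^2-2)$. The differences are cosmetic. You obtain $U_\theta^{\mathsf T}AU_\theta=A$ from the injectivity of $A\mapsto q_A$ rather than by polarization, and you read off $\mathbb E(\abs{z_k}^2-2)^2=4$ and $\mathbb E(\abs{z_k}^2-2)^3=16$ from the cumulants of the exponential law rather than computing them from the Gaussian moments $1,3,15$.
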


\begin{proof}
We first show that $U(1)$-invariance forces the representing matrix $A$ to commute with the complex structure $J$. This makes $A$ Hermitian with respect to the complex inner product. Unitary diagonalization then gives real eigenvalues in $2\times2$ real blocks and yields the asserted representation. We finally compute the moments directly.

By \Cref{lem:quad-trace}, there exists a unique $A\in\operatorname{Sym}(2N)$ such that
\begin{equation*}
 q(x)=q_A(x)=x^{\mathsf T}Ax-\tr A.
\end{equation*}
Define the \emph{complex structure} on $\C^N\simeq\R^{2N}$ by
\[
 J(x_1,y_1,\dots,x_N,y_N)
 \coloneqq(-y_1,x_1,\dots,-y_N,x_N).
\]
Then $U_\theta=e^{\theta J}$. The assumed invariance implies
\[
 x^{\mathsf T}(U_\theta^{\mathsf T}AU_\theta-A)x=0
 \qquad(x\in\R^{2N}).
\]
The matrix $B_\theta\coloneqq U_\theta^{\mathsf T}AU_\theta-A$ is real and symmetric. For every $x,y\in\R^{2N}$, the polarization identity gives
\[
 4x^{\mathsf T}B_\theta y
 =(x+y)^{\mathsf T}B_\theta(x+y)
  -(x-y)^{\mathsf T}B_\theta(x-y)=0.
\]
Thus $B_\theta=0$, that is, $U_\theta^{\mathsf T}AU_\theta=A$. Since $U_\theta=e^{\theta J}$ and $J^{\mathsf T}=-J$,
\[
 U_\theta^{\mathsf T}=e^{\theta J^{\mathsf T}}=e^{-\theta J},
 \qquad
 \left.\frac{\dd}{\dd\theta}e^{\theta J}\right|_{\theta=0}=J,
 \qquad
 \left.\frac{\dd}{\dd\theta}e^{-\theta J}\right|_{\theta=0}=-J.
\]
Differentiating $U_\theta^{\mathsf T}AU_\theta=A$ at $\theta=0$ and using the product rule, we obtain
\[
 \left.\frac{\dd}{\dd\theta}
 \bigl(e^{-\theta J}Ae^{\theta J}\bigr)\right|_{\theta=0}
 =-JA+AJ=0.
\]
Therefore,
\[
 AJ=JA.
\]
Applying this equality to $w\in\C^N\simeq\R^{2N}$ gives $AJw=JAw$. Since $Jw$ represents $iw$,
\[
 A(iw)=A(Jw)=JAw=iAw.
\]
Hence $A$ can be regarded as a complex-linear operator on $\C^N$.

Write $\langle\cdot,\cdot\rangle_{\C}$ for the \emph{complex inner product} and $\langle\cdot,\cdot\rangle_{\R}$ for the \emph{real Euclidean inner product}, and take the complex inner product to be complex linear in the first variable. The real symmetry of $A$ gives
\[
 \Re\langle Az,w\rangle_{\C}
 =\langle Az,w\rangle_{\R}
 =\langle z,Aw\rangle_{\R}
 =\Re\langle z,Aw\rangle_{\C}.
\]
Replace $w$ by $iw$. Conjugate linearity in the second variable and the identity $A(iw)=iAw$ give
\begin{align*}
 \Re\langle Az,iw\rangle_{\C}
 &=\Re\bigl(-i\langle Az,w\rangle_{\C}\bigr)
 =\Im\langle Az,w\rangle_{\C},\\
 \Re\langle z,A(iw)\rangle_{\C}
 &=\Re\bigl(-i\langle z,Aw\rangle_{\C}\bigr)
 =\Im\langle z,Aw\rangle_{\C}.
\end{align*}
The imaginary parts are therefore also equal, so $\langle Az,w\rangle_{\C}=\langle z,Aw\rangle_{\C}$. Thus $A$ is Hermitian with respect to the complex inner product. By the spectral theorem for Hermitian matrices, after changing complex orthonormal coordinates, the real representation of $A$ is
\[
 \operatorname{diag}(\lambda_1I_2,\dots,\lambda_NI_2).
\]
Writing $z_k=x_k+iy_k$ in these coordinates, we have
\[
 x^{\mathsf T}Ax
 =\sum_{k=1}^N\lambda_k(x_k^2+y_k^2)
 =\sum_{k=1}^N\lambda_k|z_k|^2,
 \qquad
 \tr A=2\sum_{k=1}^N\lambda_k.
\]
Substituting these identities into the definition of $q_A$ gives
\[
 q(z)=\sum_{k=1}^N\lambda_k|z_k|^2-2\sum_{k=1}^N\lambda_k
 =\sum_{k=1}^N\lambda_k(|z_k|^2-2),
\]
which proves the asserted representation.

This complex unitary change of coordinates preserves the real Euclidean norm, so it is also an orthogonal transformation of $\R^{2N}$ and preserves the standard Gaussian measure $\gamma^{2N}$. We may therefore compute the following moments in the new coordinates. Let $Z=(Z_1,\dots,Z_N)\sim\gamma^{2N}$ and write $Z_k=\xi_k+i\eta_k$, where $\xi_1,\eta_1,\dots,\xi_N,\eta_N$ are independent standard normal variables. Set
\[
 W_k\coloneqq|Z_k|^2-2=\xi_k^2+\eta_k^2-2.
\]
By \Cref{lem:normal-mom}, $\mathbb E\xi_k^2=\mathbb E\eta_k^2=1$, $\mathbb E\xi_k^4=\mathbb E\eta_k^4=3$, and $\mathbb E\xi_k^6=\mathbb E\eta_k^6=15$. The independence of $\xi_k$ and $\eta_k$ gives
\begin{align*}
 \mathbb E(\xi_k^2+\eta_k^2)^2
 &=\mathbb E\xi_k^4+2\mathbb E\xi_k^2\mathbb E\eta_k^2+\mathbb E\eta_k^4 = 8,\\
 \mathbb E(\xi_k^2+\eta_k^2)^3
 &=\mathbb E\xi_k^6
   +3\mathbb E\xi_k^4\mathbb E\eta_k^2
   +3\mathbb E\xi_k^2\mathbb E\eta_k^4
   +\mathbb E\eta_k^6 = 48,\\
 \mathbb E(\xi_k^2+\eta_k^2) &= 2.
\end{align*}
Therefore,
\begin{align*}
 \mathbb EW_k
 &=\mathbb E(\xi_k^2+\eta_k^2)-2=0,\\
 \mathbb EW_k^2
 &=\mathbb E(\xi_k^2+\eta_k^2)^2
   -4\mathbb E(\xi_k^2+\eta_k^2)+4
 =4,\\
 \mathbb EW_k^3
 &=\mathbb E(\xi_k^2+\eta_k^2)^3
   -6\mathbb E(\xi_k^2+\eta_k^2)^2
   +12\mathbb E(\xi_k^2+\eta_k^2)-8 =16.
\end{align*}
The asserted representation gives $q(Z)=\sum_{k=1}^N\lambda_kW_k$. The variables $W_k$ for distinct $k$ are independent and centered. Thus, if $k\ne\ell$,
\[
 \mathbb E(W_kW_\ell)=\mathbb EW_k\,\mathbb EW_\ell=0,
 \qquad
 \mathbb E(W_k^2W_\ell)=\mathbb EW_k^2\,\mathbb EW_\ell=0,
\]
and $\mathbb E(W_kW_\ell W_m)=0$ for distinct $k,\ell,m$. Therefore,
\begin{align*}
 \mathbb Eq(Z)^2
 &=\sum_{k=1}^N\lambda_k^2\mathbb EW_k^2
   +2\sum_{1\le k<\ell\le N}\lambda_k\lambda_\ell\mathbb E(W_kW_\ell)
 =4\sum_{k=1}^N\lambda_k^2,\\
 \mathbb Eq(Z)^3
 &=\sum_{k=1}^N\lambda_k^3\mathbb EW_k^3
   +3\sum_{\substack{1\le k,\ell\le N\\k\ne\ell}}
      \lambda_k^2\lambda_\ell\mathbb E(W_k^2W_\ell)\\
 &\quad
   +6\sum_{1\le k<\ell<m\le N}
      \lambda_k\lambda_\ell\lambda_m\mathbb E(W_kW_\ell W_m)\\
 &=16\sum_{k=1}^N\lambda_k^3.
\end{align*}
This proves the moment identities. Finally,
\begin{align*}
 \left| \int_{\R^{2N}}q^3\dd\gamma^{2N} \right|
 &=16 \left| \sum_{k=1}^N\lambda_k^3 \right|
 \le16\sum_{k=1}^N|\lambda_k|^3
 \le16\left(\sum_{k=1}^N\lambda_k^2\right)^{3/2}\\
 &=2\left(4\sum_{k=1}^N\lambda_k^2\right)^{3/2}
 =2\left(\int_{\R^{2N}}q^2\dd\gamma^{2N}\right)^{3/2}.
\end{align*}
This proves the bound.
\end{proof}

\begin{theorem} \label{thm:H1-not-U1}
\[
 H_1\notin\Gamma^\infty/U(1).
\]
Consequently, the inclusion \eqref{eq:natural-inclusion} is strict.
\end{theorem}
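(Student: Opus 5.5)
We argue by contradiction: assume $H_1\in\Gamma^\infty/U(1)$. The plan is to turn this assumption into a sequence of $U(1)$-invariant $1$-Lipschitz functions on Gaussian spaces whose laws tend to the half-normal law. We then compare third moments: the half-normal law forces a third moment that invariant quadratic forms cannot reach.

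\emph{Step 1: invariant observables.} Since $\Gamma^\infty/U(1)$ is the box closure of $\bigcup_n\mathcal P_{\Gamma^n_{\C}/U(1)}$, there are $X_j\prec\Gamma^{n_j}_{\C}/U(1)$ with $X_j\to H_1$ in the box distance. Apply \Cref{lem:observable-approx} with $f=\mathrm{id}\in\Lip_1(H_1)$, which is nonnegative. Compose with the domination maps and with $\pi_{U(1)}$ (\Cref{lem:quotient}). This gives $U(1)$-invariant $F_j\in\Lip_1(\R^{2n_j})$ with $F_j\ge0$ and
\[
(F_j)_*\gamma^{2n_j}\Rightarrow\chi_1 .
\]
By \Cref{lem:UI}, all moments of $F_j$ converge to those of $\abs\xi$, where $\xi\sim N(0,1)$.

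\emph{Step 2: the centered square.} Set $G_j\coloneqq F_j^2-\mathbb E F_j^2$.
\begin{itemize}
\item $G_j$ is even, because $-1\in U(1)$, and it has mean zero.
\item By \Cref{lem:UI} applied to polynomials, $\norm{G_j}_2^2\to\Var(\xi^2)=2$, $\mathbb E G_j^3\to\mathbb E(\xi^2-1)^3=8$, and $\sup_j\norm{G_j}_4<\infty$.
\item By \Cref{lem:Lip-Sobolev}, $\nabla G_j=2F_j\nabla F_j$ with $\norm{\nabla F_j}_2\le1$, so $\int\norm{\nabla G_j}_2^2\,d\gamma\le4\,\mathbb E F_j^2\to4$.
\end{itemize}
Write $a_j\coloneqq\norm{P_2G_j}_2^2$ and $b_j\coloneqq\norm{G_j-P_2G_j}_2^2$. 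Evenness kills all odd chaoses and the zero mean kills $P_0$, so $G_j-P_2G_j$ lives in chaoses of degree $\ge4$. Then \Cref{lem:chaos-energy} (as in \Cref{lem:gap-hierarchy}) gives
\[
2a_j+4b_j\le\int\norm{\nabla G_j}_2^2\,d\gamma\le4+o(1),
\]
while $a_j+b_j\to2$. Subtracting twice the second relation from the first yields $2b_j\le o(1)$. Hence $b_j\to0$ and $a_j\to2$, that is, $\norm{G_j-q_j}_2\to0$ with $q_j\coloneqq P_2G_j$. This spectral-gap squeeze is the key step: it is what forces $G_j$ onto its quadratic part.

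\emph{Step 3: third-moment contradiction.} By \Cref{lem:P2-equiv} applied with $K=\{U_\theta\}$, the form $q_j$ is $U(1)$-invariant. \Cref{lem:U1-H2} then gives
\[
\abs{\mathbb E q_j^3}\le2(\mathbb E q_j^2)^{3/2}\to2\cdot2^{3/2}=4\sqrt2 .
\]
To transfer this bound to $G_j$, use
\[
\abs{G^3-q^3}\le\abs{G-q}\,(G^2+\abs{Gq}+q^2),
\]
together with Cauchy--Schwarz and Hölder. This gives
\[
\abs{\mathbb E G_j^3-\mathbb E q_j^3}\le\norm{G_j-q_j}_2\cdot C\bigl(\norm{G_j}_4^2+\norm{q_j}_4^2\bigr).
\]
The $L^4$ norms are bounded uniformly in $j$: for $G_j$ by \Cref{lem:UI}, and for $q_j$ by \Cref{lem:H2-hypercontractivity}, which gives $\norm{q_j}_4\le3\norm{q_j}_2$. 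Hence $\mathbb E G_j^3-\mathbb E q_j^3\to0$. Passing to the limit gives $8\le4\sqrt2$, a contradiction. Therefore $H_1\notin\Gamma^\infty/U(1)$. Combined with $H_1\in\Gamma^\infty/\Z_2$ from \Cref{lem:H-membership} and the inclusion \eqref{eq:natural-inclusion}, this shows that the inclusion is strict.
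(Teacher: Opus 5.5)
Your proposal is correct and takes essentially the same route as the paper: you lift to $U(1)$-invariant $1$-Lipschitz observables, use the Wiener-chaos energy identity to force the centered square $G_j$ onto its $P_2$-component, and contradict the bound $\abs{\mathbb E q^3}\le 2(\mathbb E q^2)^{3/2}$ for $U(1)$-invariant centered quadratic forms. The differences are minor streamlinings. You apply the chaos identity to $G_j$ directly to get $2a_j+4b_j\le\int\norm{\nabla G_j}_2^2\,d\gamma$, instead of splitting the energy by polarization, and you transfer the third moment by Cauchy--Schwarz with $L^4$ bounds, instead of first interpolating to $L^3$. You should also add one line: the a.e.\ invariance of $q_j$ given by \Cref{lem:P2-equiv} holds at every point because $q_j$ is a polynomial and $\gamma$ has full support, and \Cref{lem:U1-H2} needs this pointwise invariance.
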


\begin{proof}
The proof derives a contradiction from the third moments. The centered squares of the lifted observables are approximated in $L^3$ by $U(1)$-invariant centered quadratic forms with limiting second and third moments $2$ and $8$, whereas \Cref{lem:U1-H2} bounds the limiting third moment by $4\sqrt2$.

Suppose, to the contrary, that $H_1\in\Gamma^\infty/U(1)$. Since $\Gamma^\infty/U(1)$ is the box closure of the union of the associated pyramids generated by the finite-dimensional quotients, there exist positive integers $N_j$ and mm-spaces $X_j$ such that
\[
 X_j\prec\Gamma^{N_j}_{\C}/U(1),
 \qquad
 X_j\longrightarrow H_1\quad\text{in the box distance as }j\to\infty.
\]
Let $p_j\colon\Gamma^{N_j}_{\C}/U(1)\to X_j$
be a measure-preserving $1$-Lipschitz map realizing the domination. The identity function $r(t)=t$ on $H_1$ is nonnegative and $1$-Lipschitz and satisfies $r_*\chi_1=\chi_1$. By \Cref{lem:observable-approx}, choose nonnegative observables $f_j\in\Lip_1(X_j)$ approximating $r$ such that $(f_j)_*\mu_{X_j}\Rightarrow\chi_1$ as $j\to\infty$. Using the quotient map $\pi_j\colon\C^{N_j}\to\C^{N_j}/U(1)$, set
\begin{equation*}
 r_j\coloneqq f_j\circ p_j\circ\pi_j\colon\C^{N_j}\longrightarrow[0,\infty).
\end{equation*}
Then $r_j$ is $U(1)$-invariant and $1$-Lipschitz, and
\begin{equation}
 (r_j)_*\gamma^{2N_j}\Rightarrow\chi_1
 \qquad(j\to\infty).                                            \label{eq:rj-weak}
\end{equation}

In what follows, expectations and $L^p$-norms are taken with respect to $\gamma^{2N_j}$. If $\xi\sim N(0,1)$, then the limiting distribution is that of $\abs\xi$. By \Cref{lem:normal-mom},
\[
 \mathbb E\abs\xi^2=1,
 \qquad
 \mathbb E\abs\xi^4=3,
 \qquad
 \mathbb E\abs\xi^6=15.
\]
Therefore,
\[
 \Var(\abs\xi^2)=2,
 \qquad
 \mathbb E(\abs\xi^2-1)^3=8.
\]
Applying \eqref{eq:moment-conv} in \Cref{lem:UI} to \eqref{eq:rj-weak} with $\psi(t)=t^2,t^4,t^6$, respectively, gives $\mathbb Er_j^2\to1$, $\mathbb Er_j^4\to3$, and $\mathbb Er_j^6\to15$ as $j\to\infty$. Hence
\begin{align}
 \mathbb Er_j^2&\longrightarrow1,                                 \label{eq:rj2}\\
 \Var(r_j^2)&=\mathbb Er_j^4-(\mathbb Er_j^2)^2\longrightarrow2,  \label{eq:rj4}\\
 \mathbb E(r_j^2-\mathbb Er_j^2)^3
 &=\mathbb Er_j^6-3(\mathbb Er_j^2)(\mathbb Er_j^4)
   +2(\mathbb Er_j^2)^3\longrightarrow8                         \label{eq:rj6}
\end{align}
as $j\to\infty$.

\medskip
\noindent\textbf{Step 1: The centered square concentrates near a quadratic form.}
Set $h_j\coloneqq r_j^2-\mathbb Er_j^2$. Then $h_j$ has zero mean and is $U(1)$-invariant. In particular, it is even because $-1\in U(1)$. Using $P_2$ defined by \eqref{eq:chaos-decomposition}, set
\begin{equation}
 q_j\coloneqq P_2h_j,
 \qquad
 s_j\coloneqq h_j-q_j.                                           \label{eq:hj-quadratic-split}
\end{equation}
Since $q_j$ is a quadratic polynomial, it belongs to $W^{1,2}(\gamma^{2N_j})$. By \Cref{lem:Lip-Sobolev}, $h_j\in W^{1,2}(\gamma^{2N_j})$, so $s_j\in W^{1,2}(\gamma^{2N_j})$ as well. Moreover, $s_j$ is even, has zero mean, and is orthogonal to $\mathcal Q_{2N_j}$. Set
\begin{equation*}
 \mathcal E_j(u)\coloneqq\int\norm{\nabla u}_2^2\dd\gamma^{2N_j}
 \qquad\bigl(u\in W^{1,2}(\gamma^{2N_j})\bigr).
\end{equation*}
By \Cref{lem:Lip-Sobolev} and the almost-everywhere inequality $\norm{\nabla r_j}_2\le1$,
\begin{equation}
 \mathcal E_j(h_j)
 =4\int r_j^2\norm{\nabla r_j}_2^2\dd\gamma^{2N_j}
 \le4\mathbb Er_j^2.                                              \label{eq:hj-energy-upper}
\end{equation}
Applying \eqref{eq:chaos-energy} to $q_j=P_2h_j$, whose only chaos component has degree $2$, gives
\begin{equation}
 \mathcal E_j(q_j)=2\norm{q_j}_2^2.                              \label{eq:qj-energy}
\end{equation}
Also, $P_2s_j=P_2h_j-P_2^2h_j=0$, and $P_mq_j=0$ for $m\ne2$. Applying \eqref{eq:chaos-energy-polarized} to $q_j$ and $s_j$, we therefore obtain
\begin{equation}
 \int\inner{\nabla q_j}{\nabla s_j}\dd\gamma^{2N_j}
 =\sum_{m=1}^\infty m\inner{P_mq_j}{P_ms_j}_{L^2(\gamma^{2N_j})}
 =2\inner{q_j}{P_2s_j}=0.                                       \label{eq:qj-energy-orth}
\end{equation}
Using $h_j=q_j+s_j$ from \eqref{eq:hj-quadratic-split} and the linearity of the gradient, we have
\begin{align*}
 \mathcal E_j(h_j)
 &=\int\norm{\nabla q_j+\nabla s_j}_2^2\dd\gamma^{2N_j}\\
 &=\mathcal E_j(q_j)
   +2\int\inner{\nabla q_j}{\nabla s_j}\dd\gamma^{2N_j}
   +\mathcal E_j(s_j)\\
 &=2\norm{q_j}_2^2+\mathcal E_j(s_j),
\end{align*}
where the last equality uses \eqref{eq:qj-energy} and \eqref{eq:qj-energy-orth}. The $L^2$-orthogonality also gives
\[
 \norm{h_j}_2^2=\norm{q_j}_2^2+\norm{s_j}_2^2.
\]
Subtracting twice the second identity from the first, we obtain
\[
 \mathcal E_j(h_j)-2\norm{h_j}_2^2
 =\mathcal E_j(s_j)-2\norm{s_j}_2^2.
\]
Applying part~(2) of \Cref{lem:gap-hierarchy} to $s_j$ and then using \eqref{eq:rj2}, \eqref{eq:rj4}, and \eqref{eq:hj-energy-upper}, we obtain
\begin{align}
 0&\le2\norm{s_j}_2^2
 \le\mathcal E_j(h_j)-2\norm{h_j}_2^2\\
 &\le4\mathbb Er_j^2-2\Var(r_j^2)\longrightarrow0
 \qquad(j\to\infty).                                           \label{eq:spectral-defect}
\end{align}
Equations \eqref{eq:hj-quadratic-split} and \eqref{eq:spectral-defect} give
\begin{equation}
 \norm{h_j-q_j}_2^2
 =\norm{s_j}_2^2
 \le\frac12\bigl(\mathcal E_j(h_j)-2\norm{h_j}_2^2\bigr)
 \longrightarrow0\qquad(j\to\infty).                           \label{eq:L2-q}
\end{equation}

We strengthen this to $L^3$ convergence in order to transfer convergence of the third moments from $h_j$ to $q_j$. By \Cref{lem:UI}, $\sup_j\mathbb Er_j^8<\infty$, and \eqref{eq:rj2} also gives $\sup_j\mathbb Er_j^2<\infty$. Since $h_j=r_j^2-\mathbb Er_j^2$, the triangle inequality gives
\[
 \norm{h_j}_4
 \le\norm{r_j^2}_4+\mathbb Er_j^2
 =(\mathbb Er_j^8)^{1/4}+\mathbb Er_j^2.
\]
Therefore,
\begin{equation}
 \sup_j\norm{h_j}_4<\infty.                                      \label{eq:hj-L4}
\end{equation}
We have $q_j=P_2h_j\in\mathcal H_2$, and \eqref{eq:chaos-Parseval} gives $\norm{q_j}_2\le\norm{h_j}_2$. The right-hand side is uniformly bounded by \eqref{eq:rj4}. Thus \Cref{lem:H2-hypercontractivity} gives
\begin{equation}
 \sup_j\norm{q_j}_4<\infty.                                     \label{eq:qj-L4}
\end{equation}
Since $s_j=h_j-q_j$, equations \eqref{eq:hj-L4} and \eqref{eq:qj-L4} give $\sup_j\norm{s_j}_4<\infty$. The Cauchy--Schwarz inequality yields
\[
 \norm{s_j}_3^3
 =\int |s_j|\,|s_j|^2\dd\gamma^{2N_j}
 \le\norm{s_j}_2\norm{s_j}_4^2.
\]
It follows from \eqref{eq:L2-q} that
\begin{equation}
 \norm{s_j}_3
 \le\norm{s_j}_2^{1/3}\norm{s_j}_4^{2/3}
 \longrightarrow0\qquad(j\to\infty).                           \label{eq:L3-q}
\end{equation}
Equation~\eqref{eq:rj4} gives $\norm{h_j}_2^2=\Var(r_j^2)\to2$, and \eqref{eq:L2-q} gives $\norm{q_j-h_j}_2\to0$, as $j\to\infty$. Therefore, the reverse triangle inequality gives
\[
 \abs{\norm{q_j}_2-\norm{h_j}_2}
 \le\norm{q_j-h_j}_2\longrightarrow0.
\]
Thus $\norm{q_j}_2\to\sqrt2$ as $j\to\infty$, or equivalently, $\mathbb Eq_j^2\to2$. Moreover,
\[
 \abs{q_j^3-h_j^3}
 \le\abs{q_j-h_j}(\abs{q_j}^2+\abs{q_jh_j}+\abs{h_j}^2),
\]
and H\"older's inequality gives
\[
 \abs{\mathbb E(q_j^3-h_j^3)}
 \le\norm{q_j-h_j}_3
 \bigl(\norm{q_j}_3^2+\norm{q_j}_3\norm{h_j}_3+\norm{h_j}_3^2\bigr).
\]
By \eqref{eq:hj-L4}, \eqref{eq:qj-L4}, and $\norm{g}_3\le\norm{g}_4$, the expression in parentheses is uniformly bounded. Equation~\eqref{eq:L3-q} gives $\norm{q_j-h_j}_3=\norm{s_j}_3\to0$ as $j\to\infty$. Hence $\abs{\mathbb Eq_j^3-\mathbb Eh_j^3}\to0$ as $j\to\infty$. Combining these facts with \eqref{eq:rj4} and \eqref{eq:rj6}, we obtain
\begin{equation}
 \mathbb Eq_j^2\longrightarrow2,
 \qquad
 \mathbb Eq_j^3\longrightarrow8
 \qquad(j\to\infty).                                             \label{eq:qj-mom-limit}
\end{equation}

\medskip
\noindent\textbf{Step 2: The third-moment bound for $U(1)$-invariant centered quadratic forms.}
By \eqref{eq:hj-quadratic-split}, $q_j=P_2h_j$, and $h_j$ is $U(1)$-invariant. For every $\theta\in\R$, the map $R_\theta z=e^{i\theta}z$ is an orthogonal transformation of $\R^{2N_j}$. The orthogonal equivariance in \Cref{lem:P2-equiv} therefore gives $q_j\circ R_\theta=q_j$ almost everywhere with respect to $\gamma^{2N_j}$. Both sides are continuous quadratic polynomials, and the support of the Gaussian measure is all of $\R^{2N_j}$. Thus this equality holds at every point, and $q_j$ is $U(1)$-invariant. By \Cref{lem:U1-H2},
\begin{equation*}
 \abs{\mathbb Eq_j^3}\le2(\mathbb Eq_j^2)^{3/2}.
\end{equation*}
Passing to the limit and using \eqref{eq:qj-mom-limit}, we obtain
\[
 8\le2\cdot2^{3/2}=4\sqrt2.
\]
This contradicts $4\sqrt2<8$. Therefore, $H_1\notin\Gamma^\infty/U(1)$. Since \Cref{lem:H-membership} gives $H_1\in\Gamma^\infty/\Z_2$, the inclusion \eqref{eq:natural-inclusion} is strict.
\end{proof}

\begin{proof}[Proof of \Cref{thm:intro-strict-containment}]
The theorem follows from \Cref{eq:natural-inclusion} and
\Cref{thm:H1-not-U1}.
\end{proof}

\subsection{The two quotients do not coincide even at different scales}

\begin{corollary} \label{cor:quot-scale}
For every $b,c>0$,
\[
 b(\Gamma^\infty/\Z_2)\ne c(\Gamma^\infty/U(1)).
\]
\end{corollary}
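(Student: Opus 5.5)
Suppose, to the contrary, that $b(\Gamma^\infty/\Z_2)=c(\Gamma^\infty/U(1))$ for some $b,c>0$. We first pin down the scale factor using the Poincar\'e constants, and then invoke the strict containment.

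For the scale factor, apply $C_{2,2}$ to both sides. By the $1$-homogeneity of $C_{2,2}$ on pyramids and \Cref{thm:C22},
\[
 \frac{b}{\sqrt2}=bC_{2,2}(\Gamma^\infty/\Z_2)=C_{2,2}\bigl(b(\Gamma^\infty/\Z_2)\bigr)
 =C_{2,2}\bigl(c(\Gamma^\infty/U(1))\bigr)=\frac{c}{\sqrt2}.
\]
Since this common value is finite and positive, we conclude $b=c$.

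Now multiply both pyramids by $1/b$; rescaling is a bijection on mm-isomorphism classes. This gives $\Gamma^\infty/\Z_2=\Gamma^\infty/U(1)$, which contradicts \Cref{thm:intro-strict-containment}. Concretely, $H_1\in\Gamma^\infty/\Z_2$ by \Cref{lem:H-membership}, while $H_1\notin\Gamma^\infty/U(1)$ by \Cref{thm:H1-not-U1}.

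There is no real obstacle here. The only point requiring care is the homogeneity $C_{2,2}(a\mathcal P)=aC_{2,2}(\mathcal P)$, which was stated in the preliminaries: rescaling the metric by $a$ multiplies $\lipa$ by $1/a$ and leaves the variance unchanged. It also matters that the common value $1/\sqrt2$ is neither $0$ nor $\infty$, since otherwise the equation would not force $b=c$.
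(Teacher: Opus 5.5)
Your proposal is correct and follows the paper's proof: the $1$-homogeneity of $C_{2,2}$ together with \Cref{thm:C22} forces $b=c$, and rescaling by $1/b$ then gives $\Gamma^\infty/\Z_2=\Gamma^\infty/U(1)$, which contradicts \Cref{thm:H1-not-U1}. Your remark that $1/\sqrt2$ is finite and nonzero, which is what makes the equation force $b=c$, is a point the paper leaves implicit.
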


\begin{proof}
If the two pyramids were equal, \Cref{thm:C22} and $1$-homogeneity would give
\[
 \frac b{\sqrt2}
 =C_{2,2}(b(\Gamma^\infty/\Z_2))
 =C_{2,2}(c(\Gamma^\infty/U(1)))
 =\frac c{\sqrt2}.
\]
Thus $b=c$. Rescaling both sides by $1/b$ would give $\Gamma^\infty/\Z_2=\Gamma^\infty/U(1)$, contradicting \Cref{thm:H1-not-U1}.
\end{proof}

\begin{proof}[Proof of \Cref{thm:main}]
By \Cref{cor:Gauss-vs-quot}, the Gaussian pyramid is non-similar to either Hopf quotient. By \Cref{cor:quot-scale}, the two Hopf quotients are non-similar. Thus the three pyramids are pairwise non-similar.
\end{proof}

\begin{ack}
The authors would like to thank Tomohiro Fukaya, Ayato Mitsuishi and Daisuke Kazukawa for their valuable comments.
The authors used ChatGPT in preparing this manuscript. The authors reviewed and revised the mathematical content and take full responsibility for the final manuscript.
\end{ack}

\end{document}